\newcommand{\btablesize}{\begin{scriptsize}}
\newcommand{\etablesize}{\end{scriptsize}}
\theoremstyle{definition} \newtheorem{cor}{Corollary}[section]
\theoremstyle{definition} \newtheorem{lem}[cor]{Lemma} 
\theoremstyle{definition} \newtheorem{prop}[cor]{Proposition} 
\theoremstyle{definition} \newtheorem{thm}[cor]{Theorem}
\theoremstyle{definition} 
\theoremstyle{definition} \newtheorem{alg}[cor]{Algorithm}
\theoremstyle{definition} \newtheorem{defn}[cor]{Definition}
\theoremstyle{definition} 
\theoremstyle{definition} 
\theoremstyle{definition}  
\theoremstyle{definition} 
\theoremstyle{definition}  
\theoremstyle{definition}  
\theoremstyle{definition}  
\theoremstyle{definition} 
\theoremstyle{definition} 
\theoremstyle{definition} \newtheorem{rmk}[cor]{Remark}
\theoremstyle{definition} 
\theoremstyle{definition} 
\theoremstyle{definition} 
\newcommand{\D}{\mathrel{\mathcal D}}
\newcommand{\N}{\mathbb N}
\newcommand{\C}{\mathbb C}
\newcommand{\G}{\mathbb G}
\newcommand{\up}{\textup}
\newcommand{\Z}{\mathbb Z}
\newcommand{\ran}{\textup{ran}}
\newcommand{\dom}{\textup{dom}}
\newcommand{\ld}{\lfloor}
\newcommand{\rd}{\rfloor}
\newcommand{\meet}{\wedge}
\newcommand{\Lev}{\textup{Lev}}
\renewcommand{\min}{m}
\renewcommand{\tt}{\texttt}
\author{Martin E.\ Malandro}
\begin{document}


\title{Enumeration of finite inverse semigroups}
\address{Department of Mathematics and Statistics, Box 2206, Sam Houston State University, Huntsville, TX 77341-2206}
\email{malandro@shsu.edu}

\begin{abstract}We give an efficient algorithm for the enumeration up to isomorphism of the inverse semigroups of order $n$, and we count the number $S(n)$ of inverse semigroups of order $n\leq 15$. This improves considerably on the previous highest-known value $S(9)$. We also give a related algorithm for the enumeration up to isomorphism of the finite inverse semigroups $S$ with a given underlying semilattice of idempotents $E$, a given restriction of Green's $\D$-relation on $S$ to $E$, and a given list of maximal subgroups of $S$ associated to the elements of $E$. 
\end{abstract}

\keywords{
Inverse semigroup,
enumeration,
semilattice,
maximal subgroup,
Green's relations}

\subjclass[2010]{05A15, 20M18}

\maketitle

\section{Introduction}
\label{SecIntro}
The development of efficient algorithms for the enumeration of finite algebraic structures dates back at least to 1955, when the first successful computer-based enumeration of the semigroups of order $n$ was accomplished, with the result that there are exactly 126 semigroups of order 4 \cite{Semi4}. The most recent result on the enumeration of finite semigroups is the 2012 result that there are exactly 12,418,001,077,381,302,684 semigroups of order 10 \cite{Semi10}. Currently, the number of semigroups of order $n$ is known only for $n\leq 10$, and there is a database of the semigroups of order 1 through 8 \cite{Smallsemi}.

This situation is in stark contrast to that for finite groups. The number of groups of order $n$ is known for $n\leq 2047$, and the {\em Small Groups Library} contains all the groups of order 2000 or less (excluding 1024), a total of 423,164,062 groups \cite{SmallGroups}.

Just as groups encode global symmetries, inverse semigroups encode partial symmetries \cite{Lawson}. Every group is an inverse semigroup, and every inverse semigroup is a semigroup, but neither conversely. 
This paper marks the first attempt to enumerate finite inverse semigroups specifically. If we denote by $S(n)$ the number of inverse semigroups of order $n$ up to isomorphism, then the numbers $S(1) ,\ldots, S(9)$ are known from previous work on the enumeration of finite semigroups. $S(9)$ was computed by A.\ Distler in his 2010 Ph.D.\ thesis \cite{Semi9,DistlerThesis}. Previously $S(8)$ was computed by S.\ Satoh, K.\ Yama, and M.\ Tokizawa in 1994 \cite{Semi8}. The references in \cite{Semi8} contain information concerning the history of the computation of $S(n)$ for $n\leq 7$.
In 2012 Distler et al.\ found the number of semigroups of order 10
with the help of a parallelized computation which took approximately 130 CPU years 
\cite{Semi10}. Although it may have been possible to compute $S(10)$ along the way during this computation, inverse semigroups are not discussed and $S(10)$ is not reported in \cite{Semi10}. At present there is no explicit formula for $S(n)$, and the only way to compute $S(n)$ is by a careful exhaustive search.

The approach to semigroup enumeration in \cite{Semi10,Semi9,DistlerThesis} is based on the idea that any combinatorial enumeration problem can be written as a constraint satisfaction problem. To obtain their results in \cite{Semi10}, the authors work out a collection of constraint satisfaction problems whose solutions comprise the semigroups of order $n$ which cannot be counted by any previously-known formula and feed those problems to the constraint satisfaction solver \texttt{minion} \cite{minion}. 
The constraint satisfaction approach can be adapted to count inverse semigroups by adding additional constraints to be satisfied. Using this idea and the code from \cite{DistlerThesis} we were able to compute $S(10)$ in 11.5 hours of CPU time on a test machine with an Intel\textregistered \textup{ }Core\textsuperscript{TM} i3 processor and 8 GB of RAM. This computation consumed about 7 GB of RAM. We were unable to go beyond $S(10)$ on our test machine with this approach. 

In this paper we offer a specialized method for the enumeration of the inverse semigroups of order $n$ (Algorithm \ref{AlgMain}), which is both quicker and more memory-efficient than the generic approach of \cite{Semi10,Semi9,DistlerThesis}. We have implemented our algorithm in \texttt{Sage}, an open-source computer algebra system \cite{sage}. For comparison, our implementation computes $S(10)$ on our test machine in 22 minutes of CPU time using only 256 MB of RAM. We have used our implementation to enumerate the inverse semigroups of order $1 ,\ldots, 15$. Along the way we also counted commutative inverse semigroups, inverse monoids, and commutative inverse monoids. Our enumeration results are summarized in Table \ref{TableISGs}. More detailed counts and information regarding the efficiency of our algorithm are given in Section \ref{SecResults}.

\begin{table}[ht]
\btablesize
\caption{The inverse semigroups of order $\leq 15$}
\begin{tabular}{|c|c|c|c|c|}
\hline
\multirow{2}{*}{$n$} & \# inverse semigroups & \# commutative inverse & \# inverse monoids & \# commutative inverse\\
&of order $n$& semigroups of order $n$ & of order $n$ & monoids of order $n$ \\
\hline
1 & 1 & 1 & 1 & 1\\
\hline 2 & 2 & 2 & 2 & 2 \\
\hline 3 & 5 & 5 & 4 & 4 \\
\hline 4 & 16 & 16 & 11 & 11 \\
\hline 5 & 52 & 51 & 27 & 27 \\
\hline 6 & 208 & 201 & 89 & 87 \\
\hline 7 & 911 & 877 & 310 & 300 \\
\hline 8 & 4637 & 4443 & 1311 & 1259 \\
\hline 9 & 26422 & 25284 & 6253 & 5988 \\
\hline 10 & 169163 & 161698 & 34325 & 32812 \\
\hline 11 & 1198651 & 1145508 & 212247 & 202784 \\
\hline 12 & 9324047 & 8910291 & 1466180 & 1400541 \\
\hline 13 & 78860687 & 75373563 & 11167987 & 10669344 \\
\hline 14 & 719606005 & 687950735 & 92889294 & 88761928 \\
\hline 15 & 7035514642 & 6727985390 & 836021796 & 799112310 \\
\hline
\end{tabular}
\label{TableISGs}
\etablesize
\end{table}

We have stored and made available the Cayley tables of the inverse semigroups of order $2 ,\ldots, 12$, as well as the \texttt{Sage} code we used to count the inverse semigroups of order $\leq 15$\footnote{Cayley tables and \tt{Sage} code available at \url{http://www.shsu.edu/mem037/ISGs.html}.}. A key step of Algorithm \ref{AlgMain} involves iterating over the unlabeled meet-semilattices of order $1 ,\ldots, n$ up to isomorphism. For this step we used the algorithm of Heitzig and Reinhold \cite{CountingFiniteLattices} (see also \cite{LatticeEnum2}) for generating finite lattices. For $n\leq 15$ we have stored and made available the lattices of order $n$ as lists of cover relations\footnote{Lattices available at \url{http://www.shsu.edu/mem037/Lattices.html}.}.


Our approach to inverse semigroup enumeration is based on the Ehresmann-Schein-Nambooripad (ESN) theorem (see, e.g., \cite[Ch.\ 4]{Lawson} and \cite{E,N,S}), which essentially transfers the problem of enumerating inverse semigroups to the problem of enumerating a certain class of groupoids. Briefly, let $r_1 ,\ldots, r_k$ be positive integers and let $G_1 ,\ldots, G_k$ be finite groups. Let $A$ denote the algebra 
\[
A=\bigoplus_{i=1}^k M_{r_i}(\C G_i).
\]
Let $B$ be the collection of matrices in $A$ which have a group element in exactly one position and 0 elsewhere. $B$ is called the {\em natural basis} of $A$. The set $E(B)$ of idempotents of $B$ is the set of matrices having a group identity on the diagonal. If $\leq$ is a partial order on $B$ for which $(E(B),\leq)$ is a meet-semilattice and which satisfies some additional properties (see Theorem \ref{ThmESN}), then the ESN theorem may be used to construct from $(B,\leq)$ an inverse semigroup $S$ such that
\[
|S| = \sum_{i=1}^k {r_i}^2 |G_i|.
\]
Furthermore, to generate all inverse semigroups $S$ of order $n$, it suffices to apply the construction from the ESN theorem to all partial orders $\leq$ on all natural bases $B$ satisfying the hypotheses of Theorem \ref{ThmESN}, across all algebras $A=\bigoplus_{i=1}^k M_{r_i}(\C G_i)$ for which $n = \sum_{i=1}^k {r_i}^2 |G_i|.$

Unfortunately, this process may create isomorphic inverse semigroups. To generate the inverse semigroups of order $n$ up to isomorphism, we generate inverse semigroups according to this process, testing each newly generated inverse semigroup for isomorphism against previously-generated inverse semigroups, and accepting only the inverse semigroups not isomorphic to any previously-generated inverse semigroup. To accomplish our isomorphism testing in an efficient manner we begin by separating generated inverse semigroups according to invariants. Any function $I$ such that whenever $S,T$ are inverse semigroups with $S\cong T$ we have $I(S)=I(T)$ is called an {\em invariant}. A newly-generated inverse semigroup $S$ must be tested for isomorphism only against previously-generated inverse semigroups $T$ such that $I(S)=I(T)$. 

The main computational challenges we face are the challenges of efficiently generating the partial orders on $B$, addressed in Section \ref{SecPartialOrders}, giving an effective definition for $I$, addressed in Section \ref{SecInvariants}, and quickly testing for isomorphism between pairs of inverse semigroups, addressed in Section \ref{SecIsNew}. A complete description of our enumeration procedure is given as Algorithm \ref{AlgMain} in Section \ref{SecAlgms}. Our algorithm adopts a memory-efficient iteration order---only a relatively small percentage of the inverse semigroups of order $n$ need to be held in memory for isomorphism testing at any given point during our algorithm's execution.

We also give a modification of our algorithm that enumerates the inverse semigroups $S$ having a particular semilattice $E(S)$ of idempotents, a particular restriction of Green's $D$-relation on $S$ to $E(S)$, and a particular collection of maximal subgroups of $S$ associated to the elements of $E(S)$. This modification is given as Algorithm \ref{AlgBasic} in Section \ref{SecAlgms}.




\section{Inverse semigroups}

\label{SecBasics}

In this section we collect the basic definitions and facts about inverse semigroups we require to explain our algorithms and prove their correctness. Good references for the facts in this section include \cite{CliffPres,Green,Lawson,Steinberg2}. We recount the basic theory of inverse semigroups in Section \ref{SubSecBasics}, we discuss facts about inverse semigroup isomorphisms in Section \ref{SubsecIsom}, and we examine the restriction of Green's $\D$-relation on $S$ to $E(S)$ in Section \ref{SubSecGreens}. In this paper we write our semigroup operations multiplicatively.

\subsection{Inverse semigroup basics}



\label{SubSecBasics}

\begin{defn}An {\em inverse semigroup} is a nonempty semigroup $S$ where, for each $x\in S$, there exists a unique $y\in S$ such that $xyx=x$ and $yxy=y$. We call $y$ the {\em inverse} of $x$, and we write $x^{-1}=y$.
\end{defn}

There is a well-known alternate characterization of inverse semigroups.

\begin{thm}A nonempty semigroup $S$ is an inverse semigroup if and only if $S$ is regular (meaning for each $x\in S$ there exists $y\in S$ such that $xyx=x$ and $yxy=y$) and the idempotents of $S$ commute.
\end{thm}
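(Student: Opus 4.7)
The forward direction (inverse $\Rightarrow$ regular with commuting idempotents) has regularity immediately from the definition, so the content is to show that idempotents commute. First I would observe that any idempotent $e$ satisfies $e \cdot e \cdot e = e$, so $e$ is an inverse of itself, and by uniqueness $e^{-1} = e$. Now let $e, f \in E(S)$ and set $x = (ef)^{-1}$. I would verify, by direct multiplication using $ee = e$, $ff = f$ together with the defining identities $(ef)x(ef) = ef$ and $x(ef)x = x$, that $fxe$ also satisfies both inverse identities for $ef$. Uniqueness of inverses then forces $x = fxe$, from which $fx = f \cdot fxe = fxe = x$ and similarly $xe = x$. Consequently $x = x(ef)x = (xe)(fx) = x^2$, so $x$ is idempotent. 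Being idempotent, $x$ is its own inverse, so $ef = ((ef)^{-1})^{-1} = x$; in particular $ef = fxe = f(ef)e = (fe)^2$. The symmetric argument shows $fe$ is idempotent as well, so $(fe)^2 = fe$, and we conclude $ef = fe$.

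For the backward direction, existence of an inverse in the inverse-semigroup sense is easy: given $y$ with $xyx = x$ from regularity, the element $z = yxy$ satisfies $xzx = x$ and $zxz = z$ by a short computation. For uniqueness, suppose $y_1$ and $y_2$ both satisfy the inverse identities for $x$. Each of $xy_1, xy_2, y_1 x, y_2 x$ is idempotent---for instance $(y_i x)^2 = y_i(x y_i x) = y_i x$---so these four elements commute pairwise by hypothesis. Starting from $y_1 = y_1 x y_1$ and replacing the middle $x$ by $x y_2 x$, we obtain $y_1 = (y_1 x)(y_2 x) y_1 = (y_2 x)(y_1 x) y_1 = y_2 x y_1$. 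A parallel computation starting from $y_2 = y_2 x y_2$ and replacing the middle $x$ by $x y_1 x$ gives $y_2 = y_2 x y_1$. Combining these two equalities yields $y_1 = y_2$.

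The main obstacle is spotting the candidate $fxe$ for the inverse of $ef$ in the forward direction; once that candidate is proposed, the two identities to verify fall out of the absorption laws and the original defining identities for $x$. The insight that $(ef)^{-1}$ is absorbed on the left by $f$ and on the right by $e$ is the key nonobvious move. Everything else, in both directions, is systematic bookkeeping with idempotents and the inverse-defining identities.
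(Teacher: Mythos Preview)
Your argument is correct and is precisely the standard textbook proof of this characterization (as found, e.g., in Howie or Clifford--Preston). Note, however, that the paper does not actually prove this theorem: it is stated as a well-known alternate characterization with references to \cite{CliffPres,Green,Lawson,Steinberg2}, so there is no proof in the paper to compare against. Your write-up would serve perfectly well as a self-contained proof if one were desired.
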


Let $S$ be an inverse semigroup. We denote by $E(S)$ the set of idempotents of $S$. If $x\in S$, it is immediate that $xx^{-1},x^{-1}x\in E(S)$, so $|E(S)|\geq 1$, and furthermore that if $e\in E(S)$, then $e=e^{-1}$. It is also clear from the alternate characterization that if $e,f\in E(S)$, then $ef\in E(S)$. It is also well known and easy to prove that $S$ is a group if and only if $|E(S)|=1$. 

\begin{defn}Let $S,T$ be semigroups. A map $\phi:S\rightarrow T$ is called a {\em homomorphism} if $\phi(st)=\phi(s)\phi(t)$ for all $s,t\in S$, and is called an {\em anti-homomorphism} if $\phi(st)=\phi(t)\phi(s)$ for all $s,t\in S$. An {\em isomorphism} is a bijective homomorphism and an {\em anti-isomorphism} is a bijective anti-homomorphism. $S$ and $T$ are {\em isomorphic} (resp., {\em anti-isomorphic}) if there is an isomorphism (resp., anti-isomorphism) from $S$ to $T$. If $S$ and $T$ are isomorphic, we write $S\cong T$.
\end{defn}

We now recall the natural partial order on $S$. 

\begin{defn}For $s,t\in S$, we define $s\leq t$ if and only if any of the following four equivalent conditions hold.
\begin{itemize}
	\item $s=et$ for some $e\in E(S)$. 
	\item $s=tf$ for some $f\in E(S)$.
	\item $s=ts^{-1}s$.
	\item $s=ss^{-1}t$.
\end{itemize}
\end{defn}

Thus, the restriction of $\leq$ to $E(S)$ is given by the rule that, for $e,f\in E(S)$, $e\leq f$ if and only if $e=ef=fe$. Therefore $(E(S),\leq)$ is a meet-semilattice, where the meet $e\meet f$ of $e,f\in E(S)$ is given by $e\meet f=ef=fe$. Conversely, every meet-semilattice is an idempotent inverse semigroup under the meet operation. 

As is common in inverse semigroup theory, for $s\in S$, let us write $$\dom(s)=s^{-1}s$$  and  $$\ran(s)=ss^{-1}.$$

Next we recall Green's $\D$-relation, which takes on a particularly nice form for inverse semigroups. 

\begin{defn}For $s,t\in S$, we say that $s$ is $\D$-related to $t$ and we write $s \D t$ if and only if any of the following equivalent conditions hold.
\begin{enumerate}[(i)]
	\item \label{DEquiv1} There exists $x\in S$ such that $\dom(x)=\ran(t)$ and $\ran(x)=\ran(s)$.
	\item \label{DEquiv2} There exists $y\in S$ such that $\dom(y)=\dom(t)$ and $\ran(y)=\dom(s)$.
	\item \label{DEquiv3} There exists $z\in S$ such that $\dom(z)=\dom(t)$ and $\ran(z)=\ran(s)$.
\end{enumerate} 
\end{defn}
To see that these three conditions are equivalent, for $\eqref{DEquiv1}\implies \eqref{DEquiv2},$ take $y=s^{-1}xt$, for $\eqref{DEquiv2}\implies \eqref{DEquiv3},$ take $z=sy$, and for $\eqref{DEquiv3}\implies \eqref{DEquiv1},$ take $x=zt^{-1}$.

$\D$ is an equivalence relation on $S$, and the equivalence classes of $S$ under $\D$ are called the $\D$-classes of $S$. For finite inverse semigroups, an equivalent characterization of $\D$ is that, for $s,t\in S$, $s\D t$ if and only if $s$ and $t$ generate the same two-sided ideal in $S$. 

Next we recall the notion of a maximal subgroup of an inverse semigroup.
\begin{defn}A subset $G$ of $S$ is a {\em subgroup} of $S$ if $G$ is a group with respect to the operation of $S$. A subgroup $G$ of $S$ is {\em maximal} if $G$ is not properly contained in any other subgroup of $S$. 
\end{defn}

Every idempotent $e$ of $S$ is the identity for a unique maximal subgroup of $S$, called the {\em maximal subgroup of $S$ at $e$} and denoted $G_e$. For any subgroup $G$ of $S$ containing $e$, we have $G\subseteq G_e$. In fact 
$$
G_e=\{ s\in S:\dom(s)=\ran(s)=e\},
$$
and if $e,f\in E(S)$ with $e \D f$, then $G_e\cong G_f$. Let $P$ denote the restriction of Green's $\D$-relation on $S$ to $E(S)$. Every $\D$-class of $S$ contains at least one idempotent, so $P$ is a partition of $E(S)$. Let $X\in P$ and let $G$ be a group. If $G_e\cong G$ for some $e\in X$ (and hence for every $e\in X$), then we shall write $G_X \cong G$.

We now record several important basic properties. 

\begin{thm}Let $s,t,y,z\in S$ and let $e,f\in E(S)$.
\label{ThmBasicProps}
\begin{enumerate}[(i)]
	\item If $s\leq e$, then $s\in E(S)$.  \label{simple6}

\item $s^{-1} \D s \D \dom(s) \D \ran(s)$. \label{simplecombo}

	\item If $s$ and $t$ are in the same maximal subgroup, then $s\D t$.\label{simple5} 
	\item $ses^{-1}$ is idempotent. \label{simple7}
	\item If $s\leq t$, then $s^{-1}\leq t^{-1}$. \label{simple10}
	\item If $s\leq y$ and $t\leq z$, then $s t\leq y z$.
	\item If $e\leq \dom(s)$, then $t=se$ is the unique element of $S$ such that $t\leq s$ and $\dom(t)=e$.
	\item If $e\leq \ran(s)$, then $t=es$ is the unique element of $S$ such that $t\leq s$ and $\ran(t)=e$. \label{simple11}
\end{enumerate}
\end{thm}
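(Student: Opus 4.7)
The plan is to verify the eight clauses one by one, using three standing ingredients: the four equivalent forms of $s\le t$, the three equivalent forms of $s\D t$, and the elementary facts just recorded (idempotents commute, $e^{-1}=e$ for $e\in E(S)$, and therefore $(et)^{-1}=t^{-1}e$ by uniqueness of inverses). Transitivity of $\le$, which is implicit in the assertion that $\le$ is a partial order, is also used throughout.

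For (i), write $s=ef$ with $e,f\in E(S)$ from the second form of $\le$ and compute $s^2=efef=e^2f^2=ef=s$. For (v), invert $s=et$ to get $s^{-1}=t^{-1}e$, which is again of the form element-times-idempotent. For (iv), expand $(ses^{-1})^2=s\bigl(e\,\dom(s)\,e\bigr)s^{-1}$; the two idempotents $e$ and $\dom(s)$ commute, so the inner factor collapses to $e\,\dom(s)$ and the outer product collapses using $s\,\dom(s)=s$.

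For (ii), the cleanest route is characterization (i) of $\D$, picking a suitable witness $x$ in each case: $x=s$ gives both $s\D\dom(s)$ (using $\ran(\dom(s))=\dom(s)$) and $s\D s^{-1}$ (using $\ran(s^{-1})=\dom(s)$), while $x=\ran(s)$ gives $s\D\ran(s)$. Claim (iii) then follows from characterization (iii) of $\D$, since every element of $G_e$ has $\dom=\ran=e$, so $z$ equal to either $s$ or $t$ works.

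Claim (vi) factors as $st\le yt\le yz$: from $s=\ran(s)\,y$ (fourth form of $\le$) one reads $st=\ran(s)\,(yt)$, so $st\le yt$ by the first form; symmetrically, $t=z\,\dom(t)$ yields $yt=(yz)\,\dom(t)$, so $yt\le yz$ by the second form; transitivity finishes. For (vii) and (viii), existence is the direct calculation $\dom(se)=e\,\dom(s)\,e=e$ when $e\le\dom(s)$ (with the mirror computation $\ran(es)=e$ for the other case), while uniqueness uses the third/fourth forms of $\le$: any $t'\le s$ satisfies $t'=s\,\dom(t')$, which pins $t'$ down once $\dom(t')$ is prescribed. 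The only real obstacle is bookkeeping, namely choosing for each clause the form of $\le$ or $\D$ that keeps the verification shortest; no deeper structural fact is needed.
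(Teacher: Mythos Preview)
Your proof is correct and each of the eight verifications goes through as you describe. The paper itself gives no proof of this theorem; it is introduced with ``We now record several important basic properties'' and stated without argument, the facts being standard inverse semigroup theory drawn from the references cited in Section~\ref{SecBasics}. So there is nothing to compare against: you have supplied a clean self-contained proof where the paper simply records the statements as known.
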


\subsection{Facts about isomorphisms}
\label{SubsecIsom}
Results on the enumeration of semigroups are typically reported up to {\em equivalence} (meaning isomorphism or anti-isomorphism) \cite{Semi10,Semi9, DistlerThesis, Semi8}, although some enumeration results have also been completed and reported up to isomorphism \cite{Semi9, DistlerThesis}. For inverse semigroups, these concepts agree. 

\begin{thm}Let $S$ and $T$ be inverse semigroups. Then $S$ and $T$ are isomorphic if and only if $S$ and $T$ are anti-isomorphic.
\end{thm}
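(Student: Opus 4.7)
The plan is to exhibit a canonical anti-automorphism of any inverse semigroup, namely the inversion map $\sigma_S : S \to S$ defined by $\sigma_S(s) = s^{-1}$. Once this is in hand, the theorem follows immediately by composition: given an isomorphism $\phi : S \to T$, the map $\phi \circ \sigma_S : S \to T$ is an anti-isomorphism, since for any $s,t \in S$,
\[
(\phi \circ \sigma_S)(st) = \phi\bigl((st)^{-1}\bigr) = \phi(t^{-1}s^{-1}) = \phi(t^{-1})\phi(s^{-1}) = (\phi \circ \sigma_S)(t)\,(\phi \circ \sigma_S)(s),
\]
and bijectivity is inherited from $\phi$ and $\sigma_S$. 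Conversely, composing an anti-isomorphism $\psi : S \to T$ with $\sigma_S$ produces an isomorphism by the same calculation. So both directions reduce to two facts: (1) $\sigma_S$ is an anti-automorphism, and (2) composition of an (anti-)isomorphism with an anti-automorphism flips the type.

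For (1), the bijectivity is immediate from the involution property $(s^{-1})^{-1} = s$, which is a direct consequence of the defining uniqueness of inverses. The substantive content is the identity $(st)^{-1} = t^{-1}s^{-1}$ for all $s,t \in S$, and this is the one place where real work occurs. I would prove it by verifying that $t^{-1}s^{-1}$ satisfies the two defining inverse equations relative to $st$ and then invoking uniqueness. For the first equation,
\[
(st)(t^{-1}s^{-1})(st) = s\,(tt^{-1})(s^{-1}s)\,t,
\]
and since $tt^{-1}$ and $s^{-1}s$ are both idempotents of $S$, they commute, giving $s(s^{-1}s)(tt^{-1})t = (ss^{-1}s)(tt^{-1}t) = st$. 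The dual computation gives $(t^{-1}s^{-1})(st)(t^{-1}s^{-1}) = t^{-1}s^{-1}$. Uniqueness of the inverse then forces $(st)^{-1} = t^{-1}s^{-1}$.

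The main obstacle, if any, is this computation, and it rests on the key structural fact that idempotents of an inverse semigroup commute (the alternate characterization already stated in the paper). Everything else is a formal manipulation. I would organize the proof as a short lemma establishing $(st)^{-1} = t^{-1}s^{-1}$ (hence that $\sigma_S$ is an anti-automorphism), followed by the two-line composition argument giving both implications of the theorem.
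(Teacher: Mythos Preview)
Your proof is correct and takes essentially the same approach as the paper: the paper defines $i_T:T\to T$ by $i_T(t)=t^{-1}$ and observes that $i_T\circ\phi$ flips the type of $\phi$, which is exactly your inversion-composition argument (with inversion applied on the codomain rather than the domain). Your write-up is in fact more detailed, since you supply the verification of $(st)^{-1}=t^{-1}s^{-1}$ via commuting idempotents that the paper leaves implicit.
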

\begin{proof}
Define $i_T:T\rightarrow T$ by $i_T(t)=t^{-1}$ for $t\in T$. If $\phi:S\rightarrow T$ is an isomorphism, then it is easy to check that $i_T\circ\phi$ is an anti-isomorphism. Similarly, if $\phi:S\rightarrow T$ is an anti-isomorphism, then $i_T \circ \phi$ is an isomorphism.
\end{proof}

Isomorphisms of inverse semigroups preserve their substructures. In particular, it is straightforward to verify the following result. Let $S$ and $T$ be finite inverse semigroups.

\begin{thm}
Suppose $E=E(S)=E(T)$ and $\phi: S\rightarrow T$ is an isomorphism. Then:
\label{ThmIsomTest}
\begin{enumerate}[(i)]
	\item \label{Isom0} $e\in E$ if and only if $\phi(e)\in E$.
	\item\label{Isom1} If $G_e$ is any maximal subgroup of $S$, then $\phi(G_e)$ is a maximal subgroup of $T$ and $\phi$ restricted to $G_e$ is an isomorphism of groups. 
	\item\label{Isom2} $\phi$ restricted to $E$ is a poset automorphism.
	\item \label{ThmPosetIsom1}$\phi$ is a poset isomorphism $\phi: (S\leq) \rightarrow (T,\leq)$. 
	\item\label{Isom3} $\phi$ restricts to bijections between the $\D$-classes of $S$ and $T$. In particular, let $D_1 ,\ldots, D_{k_1}$ be the $\D$-classes of $S$ and let $F_1 ,\ldots, F_{k_2}$ be the $\D$-classes of $T$. Then $k_1=k_2$, and for $x,y\in S$, $x\D y$ if and only if $\phi(x)\D \phi(y)$.
	\item \label{Isom5} If $D$ is a $\D$-class of $S$ with $k$ idempotents, then $\phi(D)$ has $k$ idempotents.
	\item \label{Isom4} For all $x\in S$, $\phi(\ran(x)) = \ran(\phi(x))$ and $\phi(\dom(x)) = \dom(\phi(x))$. 
\end{enumerate}
\end{thm}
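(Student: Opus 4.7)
The plan is to handle the seven items in a dependency-sensitive order, starting with one preliminary fact that drives everything: $\phi$ commutes with inversion. This holds because $\phi(x)\phi(x^{-1})\phi(x)=\phi(x)$ and $\phi(x^{-1})\phi(x)\phi(x^{-1})=\phi(x^{-1})$, so the uniqueness of inverses forces $\phi(x^{-1})=\phi(x)^{-1}$. With this in hand, (i) and (vii) are immediate: $\phi(e)^2=\phi(e^2)=\phi(e)$ shows $\phi$ carries idempotents to idempotents, and since $\phi$ is a bijection with $E(S)=E(T)=E$, it restricts to a bijection of $E$; for (vii), $\phi(\ran(x))=\phi(xx^{-1})=\phi(x)\phi(x)^{-1}=\ran(\phi(x))$, and the argument for $\dom$ is identical.

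For (iii) I would use the characterization $e\leq f \iff e=ef$ on $E$, which transports under $\phi$ to the analogous equation in $T$; combined with (i), this makes $\phi|_E$ a poset automorphism. Item (iv) follows similarly from the first clause in the definition of $\leq$: if $s\leq t$ then $s=et$ for some $e\in E(S)$, so $\phi(s)=\phi(e)\phi(t)$ with $\phi(e)\in E(T)$, hence $\phi(s)\leq \phi(t)$; the converse is obtained by applying the same argument to $\phi^{-1}$. For (ii), $\phi(G_e)$ is a subgroup of $T$ with identity $\phi(e)$, so $\phi(G_e)\subseteq G_{\phi(e)}$; applying the same reasoning to $\phi^{-1}$ upgrades this inclusion to equality, which is precisely the statement that $\phi(G_e)$ is a (maximal) subgroup of $T$.

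For (v), I would use the third condition in the definition of $\D$ together with (vii): if $z\in S$ witnesses $s\D t$ via $\dom(z)=\dom(t)$ and $\ran(z)=\ran(s)$, then $\phi(z)$ witnesses $\phi(s)\D\phi(t)$ because $\dom$ and $\ran$ commute with $\phi$; the reverse direction uses $\phi^{-1}$, and the claimed bijection on $\D$-classes follows. Item (vi) is then an immediate consequence of (i) and (v): inside the bijection $\phi\colon D\to \phi(D)$, idempotents correspond to idempotents. No single step is deep; the only real obstacle is bookkeeping --- making sure (i) and the preservation of $\dom,\ran$ are established before attacking (v) and (vi), and repeatedly invoking $\phi^{-1}$ to promote set containments to equalities.
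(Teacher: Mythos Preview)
Your argument is correct. The paper does not actually supply a proof of this theorem; it simply states that the result is ``straightforward to verify,'' so there is no approach in the paper to compare against, and your verification---proving $\phi(x^{-1})=\phi(x)^{-1}$ first and then deriving (i), (vii), (iii), (iv), (ii), (v), (vi) in that dependency order---is exactly the kind of routine check the paper is gesturing at.
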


We now examine the action of isomorphisms on a particular type of idempotent, which we have chosen to call {\em lonely}. Our main result about lonely idempotents is Theorem \ref{ThmLonely}, which states that if $S \cong T$, then any set map from the lonely idempotents of $S$ to the lonely idempotents of $T$ may be extended to an isomorphism.

\begin{defn}Let $e\in E(S)$. We say $e$ is a {\em lonely idempotent} if
\begin{itemize}
	\item $G_e \cong \Z_1$,
	\item $\{e\}$ is a $\D$-class of $S$, 
	\item $e$ covers the minimal element of $E(S)$, and
	\item $e$ is not covered by any element of $E(S)$.
\end{itemize}
\end{defn}

\begin{lem}
\label{LemLonelyMult}
Let $s,t\in S$. Let $\min$ denote the minimal element of $E(S)$. 
\begin{enumerate}[(i)]
	\item\label{PropLonelyMultPt1} If $s$ or $t$ is not a lonely idempotent, then $s t$ is not a lonely idempotent.
	\item\label{PropLonelyMultPt2} If $s$ and $t$ are lonely idempotents with $s\neq t$, then $s t = \min$.
\end{enumerate}
\end{lem}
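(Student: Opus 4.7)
My plan is to prove part (ii) directly from the definition of the natural partial order, and to prove part (i) by contrapositive, showing that if $st$ is a lonely idempotent then both $s$ and $t$ are lonely.

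For (ii), let $s\neq t$ be lonely idempotents and let $m$ denote the minimum of $E(S)$. Since idempotents commute, $st\in E(S)$, and the first bullet in the definition of $\leq$ yields $st\leq s$ (take $f=t$) and $st\leq t$ (take $e=s$). Because $s$ covers $m$, the only elements of $E(S)$ weakly below $s$ are $s$ and $m$, so $st\in\{s,m\}$; symmetrically $st\in\{t,m\}$. The only way to satisfy both without forcing $s=t$ or $s=m$ or $t=m$ (each of which contradicts a hypothesis) is $st=m$.

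For (i), suppose $st$ is a lonely idempotent. The key inputs are the inequalities $\ran(st)\leq\ran(s)$ and $\dom(st)\leq\dom(t)$, which follow from a short calculation using that idempotents commute; e.g.\ $\ran(s)\cdot\ran(st)=ss^{-1}stt^{-1}s^{-1}=stt^{-1}s^{-1}=\ran(st)$. Since $st$ is idempotent, $\ran(st)=\dom(st)=st$, so $st\leq\ran(s)$ and $st\leq\dom(t)$. The hypothesis that $st$ is not covered by any element of $E(S)$, together with the finiteness of $E(S)$, forces $st$ to be maximal in $E(S)$, whence $\ran(s)=st=\dom(t)$. Theorem \ref{ThmBasicProps}, part (ii), then gives $s\mathrel{\D}\ran(s)=st$ and $t\mathrel{\D}\dom(t)=st$, and since $\{st\}$ is a full $\D$-class of $S$ we conclude $s=t=st$. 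In particular both $s$ and $t$ are lonely idempotents, completing the contrapositive.

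The only mildly delicate step is the passage from ``$st$ is not covered in $E(S)$'' to ``$st$ is maximal in $E(S)$'', which is immediate from finiteness but worth flagging. Everything else is routine manipulation of the natural partial order and of $\D$ via the properties already collected in Theorem \ref{ThmBasicProps}; no machinery beyond Section \ref{SubSecBasics} is required.
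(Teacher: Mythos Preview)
Your proof is correct and follows essentially the same approach as the paper's. Both argue part (i) by contrapositive, use the inequality $st\leq ss^{-1}$ together with maximality of $st$ in $E(S)$ to force $st=\ran(s)$, and then invoke the fact that $\{st\}$ is a $\D$-class; your treatment of $t$ via $\dom(t)$ in parallel is a minor organizational variation on the paper's sequential argument (which first concludes $s=st$ and then uses $st=st\cdot t\leq t$), and your part (ii) is the same meet argument written out in slightly more detail.
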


\begin{proof}We begin by proving the contrapositive of part \eqref{PropLonelyMultPt1}. Suppose $s t$ is a lonely idempotent. Write $s t=e$ for some $e\in E(S)$. Then $e=s t = s s^{-1} s t = s s^{-1} e \leq s s^{-1}$. Since $e$ is not covered by any element of $E(S)$, $e$ is not covered by any element of $S$. After all, if $e<x$ for some $x\in S$, then $e<x^{-1}$, so $e<xx^{-1}$---but $xx^{-1}\in E(S)$. Thus $e=ss^{-1}$. It follows that $e\D s$, but since $\{e\}$ is a $\D$-class, we have $e=s$. Next, $e=s t = e t \leq t$, so $e=t$. Thus, $s$ and $t$ are both lonely idempotents.

For part \eqref{PropLonelyMultPt2}, suppose $s$ and $t$ are lonely idempotents with $s\neq t$. Since $s$ and $t$ cover $\min$ and $s t = s\meet t$, the meet of $s$ and $t$ in $E(S)$, we have $s t = \min$.
\end{proof}

\begin{thm}
\label{ThmLonely}
Suppose $\phi:S \rightarrow T$ is an isomorphism, the set of lonely idempotents of $S$ is $L(S)=\{e_1 ,\ldots, e_{r_1}\}$, and the set of lonely idempotents of $T$ is $\{f_1 ,\ldots, f_{r_2}\}$. Then $r_1 = r_2$, and if we define $\gamma: S \rightarrow T$ by 
\[
\begin{aligned}
&\gamma(e_i) = f_i && \up{for } e_i \in L(S);\\
&\gamma(x) = \phi(x) && \up{for } x\notin L(S),
\end{aligned}
\]
then $\gamma$ is an isomorphism.
\end{thm}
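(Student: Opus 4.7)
The plan is to verify $\gamma$ is a bijection and a homomorphism. For the structural parts: by Theorem \ref{ThmIsomTest}, loneliness is defined in terms of $\D$-classes, maximal subgroups, and cover relations in $E$, all of which are preserved by isomorphism, so $\phi$ restricts to a bijection $L(S)\to L(T)$; in particular $r_1=r_2$. Then $\gamma$ is a bijection because it agrees with $\phi$ on $S\setminus L(S)$ (which $\phi$ sends bijectively onto $T\setminus L(T)$) and is an arbitrary bijection $L(S)\to L(T)$.

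For the multiplicative identity $\gamma(st)=\gamma(s)\gamma(t)$, I would split on how many of $s,t$ lie in $L(S)$, using Lemma \ref{LemLonelyMult}. When neither is lonely, the contrapositive of part \eqref{PropLonelyMultPt1} of the lemma gives $st\notin L(S)$, so $\gamma$ coincides with $\phi$ at all three points and the identity is inherited from $\phi$. When both are lonely the argument is short: if $s=t$ both sides equal the idempotent $\gamma(s)$; if $s\neq t$, part \eqref{PropLonelyMultPt2} of the lemma applied in $S$ and in $T$ reduces both sides to the respective minimum idempotents, using that $\phi$ sends $\min_S$ to $\min_T$ (it is a poset isomorphism on $E$) and that $\min_S\notin L(S)$ whenever $L(S)\neq\emptyset$, so $\gamma(\min_S)=\phi(\min_S)=\min_T$.

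The hard case is the mixed one; say $s\in L(S)$ and $t\notin L(S)$. Here $\gamma(s)\gamma(t)=f_i\phi(t)$ while $\gamma(st)=\phi(st)$ (the latter uses Lemma \ref{LemLonelyMult}\eqref{PropLonelyMultPt1} to ensure $st\notin L(S)$), and the difficulty is that $f_i$ can be \emph{any} lonely idempotent of $T$, not necessarily $\phi(s)$. The crux I will establish is the following absorption principle: \emph{for any lonely $e\in L(S)$ and any $t\in S$ with $t\neq e$, one has $et=\min_S\cdot t$.} To prove it, compute $\ran(et)=ett^{-1}e\leq e$; since $e$ is an atom and is maximal in $E(S)$, $\ran(et)$ must be either $e$ or $\min_S$. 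The value $\ran(et)=e$ would give $et\,\D\,e$, force $et=e$ (since $\{e\}$ is a $\D$-class), hence $e\leq t$; combined with $e\leq t^{-1}$ (Theorem \ref{ThmBasicProps}\eqref{simple10}) and the maximality of $e$, this forces $\dom(t)=\ran(t)=e$ and thus $t\in G_e=\{e\}$, contradicting $t\neq e$. So $\ran(et)=\min_S$, and then $et=\ran(et)\cdot et=\min_S\cdot e\cdot t=\min_S\cdot t$ by semilattice absorption. Applying this principle in $S$ to $(s,t)$ and in $T$ to $(f_i,\phi(t))$ (valid because $\phi(t)\in T\setminus L(T)$, so $\phi(t)\neq f_i$), together with $\phi(\min_S)=\min_T$, identifies both $\gamma(st)$ and $\gamma(s)\gamma(t)$ with $\min_T\cdot\phi(t)$. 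The opposite mixed case $s\notin L(S)$, $t\in L(S)$ is handled by the symmetric right-sided identity $ts=t\cdot\min_S$, proved by the dual argument using $\dom$ in place of $\ran$.
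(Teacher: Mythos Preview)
Your proof is correct and follows the same three-case decomposition as the paper, with the same key absorption idea in the mixed case. The only difference is how that absorption identity is established: the paper observes directly that if $x$ is lonely and $y$ is not then $yy^{-1}$ is not lonely, whence $xyy^{-1}=x\wedge yy^{-1}=\min$ and so $xy=\min\cdot y$, whereas you reach the same conclusion through a slightly longer analysis of $\ran(et)$.
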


\begin{proof}
Let $\phi:S\rightarrow T$ be an isomorphism. 

Suppose $e\in S$ is a lonely idempotent. Then $\{\phi(e)\}$ is a $\D$-class of $T$ (since $\phi$ maps $\D$-classes to $\D$-classes),  $G_\phi(e) \cong \Z_1$ (since $\phi$ restricts to a bijection between maximal subgroups),  $\phi(e)$ covers the minimal element of $E(T)$ (since $\phi$ is a poset isomorphism), and $\phi(e)$ is not covered by any element of $E(T)$ (since $\phi$ is a poset isomorphism). That is, $\phi(e)$ is a lonely idempotent. Since $\phi$ is an isomorphism, the same argument applied to $\phi^{-1}$ shows that $e\in S$ is a lonely idempotent if and only if $\phi(e)\in T$ is a lonely idempotent and hence $r_1=r_2$. 

Let $r=r_1=r_2$ and let $\sigma$ be the permutation of $\{1 ,\ldots, r\}$ such that $\phi(e_i) = f_{\sigma(i)}$ for all $i\in \{1 ,\ldots, r\}$. It is clear that $\gamma$ is a bijection. Let $s,t\in S$. We must show $\gamma(s t) = \gamma(s)\gamma(t)$. 
\begin{itemize}
	\item[Case 1.] Suppose neither $s$ nor $t$ is a lonely idempotent. Then $s t$ is not a lonely idempotent, and $\gamma(s t) = \phi(s t) = \phi(s)\phi(t) = \gamma(s)\gamma(t)$. 
	\item[Case 2.] Suppose exactly one of $s,t$ is a lonely idempotent. Then $s t$ is not a lonely idempotent. Let $\min$ denote the minimal element of $E(T)$.
	
	Suppose $x\in T$ is any lonely idempotent and $y\in T$ is not a lonely idempotent. Then $yy^{-1}$ is not a lonely idempotent, so we have $\min=xyy^{-1}$, so $\min y = xy$. Similarly we have $y\min = yx$. 
	
	If $s$ is a lonely idempotent, then so is $\gamma(s)$, while $\gamma(t)$ is not. Say $s=e_i$. We have $\gamma(s t)=\phi(s t) = \phi(s)\phi(t) = f_{\sigma(i)} \gamma(t) = \min \gamma(t) = f_i \gamma(t) = \gamma(s)\gamma(t)$. 
	On the other hand, if $t$ is a lonely idempotent, then so is $\gamma(t)$, while $\gamma(s)$ is not. Say $t=e_i$. Then $\gamma(s t) = \phi(s t) = \phi(s) \phi(t) = \gamma(s) f_{\sigma(i)} = \gamma(s)\min = \gamma(s)f_i = \gamma(s)\gamma(t)$.
	
	\item[Case 3.] Suppose $s$ and $t$ are both lonely idempotents. Then so are $\phi(s)$, $\phi(t)$, $\gamma(s)$, and $\gamma(t)$. Let $m_S$ and $m_T$ denote the minimal elements of $S$ and $T$, respectively.	Say $s=e_i$ and $t=e_j$. If $s=t$, then $\gamma(s t) = \gamma(s) = \gamma(e_i) = f_i = \phi(e_{\sigma^{-1}(i)}) = \phi(e_{\sigma^{-1}(i)}e_{\sigma^{-1}(i)}) = \phi(e_{\sigma^{-1}(i)})\phi(e_{\sigma^{-1}(i)}) = \gamma(e_i) \gamma(e_i) = \gamma(s)\gamma(t)$. On the other hand, if $s\neq t$, then $s t=\min_S$, and $\gamma(s)\neq \gamma(t)$, so $\gamma(s)\gamma(t) = \min_T$. Therefore $\gamma(s t) = \gamma(\min_S) = \phi(\min_S) = \min_T = \gamma(s)\gamma(t)$. 
\end{itemize}
\end{proof}

\subsection{The restriction of Green's $\D$-relation to $E(S)$}
\label{SubSecGreens}
Let $E$ be a meet-semilattice and $P$ a set partition of $E$. Write $\sim$ for the equivalence relation on $E$ induced by $P$. That is, for $e,f\in E$, $e\sim f$ if and only if $e$ and $f$ are in the same part of $P$.

\begin{defn}We say $P$ is a {\em $\D$-partition} of $E$ if, whenever $e_1,e_2,f\in E$ with $e_1\sim e_2$ and $f\leq e_1$, we have
\[
|h\in E: h\leq e_1 \up{ and }h \sim f| = |h\in E : h\leq e_2 \up{ and } h \sim f|.
\]
\end{defn}

The main result of this section is Theorem \ref{ThmLegitPartition}, which states that any set partition $P$ of $E$ for which there exists an inverse semigroup $S$ such that $E(S)=E$ and such that the restriction of $\D$ from $S$ to $E(S)$ is $P$ is a $\D$-partition of $E$. In fact we have the following more general result. Let $S$ be an inverse semigroup.

\begin{prop}
\label{PropLegitPartition}
Let $s,t\in S$, let $e\in E(S)$, and let $s\D e$. Let $f\leq e$. 
	Then $$|\{t\in S:t\leq s \up{ and }t\D f\}| = |\{g\in S:g\leq e \up{ and }g\D f\}|.$$
\end{prop}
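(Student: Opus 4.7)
The plan is to construct an explicit bijection between the two sets by passing through an intermediate description in terms of idempotents below $\dom(s)$ and $e$ respectively. I would carry this out in two steps.

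\textbf{Step 1 (reformulation in terms of idempotents).} I would first replace each $t\leq s$ by $\dom(t)=t^{-1}t$. Parts (v) and (vi) of Theorem \ref{ThmBasicProps} yield $\dom(t)\leq \dom(s)$ whenever $t\leq s$; part (ii) gives $\dom(t)\D t$, so that $t\D f$ is equivalent to $\dom(t)\D f$; and part (vii) says that for every idempotent $h\leq \dom(s)$ there is a unique $t\leq s$ with $\dom(t)=h$, namely $t=sh$. Thus $t\mapsto\dom(t)$ is a bijection from $\{t\leq s\}$ onto $\{h\in E(S):h\leq\dom(s)\}$ that preserves the condition ``$\D$-related to $f$''. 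Since every element of $\{g\leq e\}$ is already idempotent by Theorem \ref{ThmBasicProps}(i), it suffices to show
\[
|\{h\in E(S):h\leq \dom(s),\ h\D f\}|=|\{g\in E(S):g\leq e,\ g\D f\}|.
\]

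\textbf{Step 2 (conjugation bijection).} Since $s\D e$, also $\dom(s)\D e$, and condition (ii) in the definition of $\D$ supplies $z\in S$ with $z^{-1}z=\dom(s)$ and $zz^{-1}=e$. I would then define $\phi(h)=zhz^{-1}$ for idempotents $h\leq \dom(s)$, with proposed inverse $g\mapsto z^{-1}gz$ for $g\leq e$. Using $h\cdot \dom(s)=h=\dom(s)\cdot h$, a short computation gives $\phi(h)^2=\phi(h)$ (also immediate from Theorem \ref{ThmBasicProps}(iv)), $\phi(h)\cdot e=\phi(h)=e\cdot \phi(h)$ so $\phi(h)\leq e$, and $z^{-1}(zhz^{-1})z=\dom(s)\,h\,\dom(s)=h$ so the two maps are mutually inverse. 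For $\D$-preservation, the element $zh$ witnesses $h\D \phi(h)$, since $\dom(zh)=h^{-1}z^{-1}zh=h\cdot\dom(s)\cdot h=h$ and $\ran(zh)=zhh^{-1}z^{-1}=zhz^{-1}=\phi(h)$, so $\phi$ maps the $\D$-class of $f$ into itself. Composing the bijection of Step 1 with $\phi$ gives the desired equality.

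\textbf{Main obstacle.} The combinatorics of the problem is entirely absorbed once one has the conjugation map, so the only real content is the verification in Step 2: checking that $\phi(h)$ is idempotent, lies below $e$, and is $\D$-related to $h$. Each of these is a direct manipulation, but one must carefully use $h\leq\dom(s)=z^{-1}z$ at exactly the right spot; the rest of the argument is formal bookkeeping via Theorem \ref{ThmBasicProps}.
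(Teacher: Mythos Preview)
Your argument is correct, and the underlying idea---conjugation by an element witnessing the $\D$-relation between $e$ and an idempotent attached to $s$---is the same as the paper's. The paper, however, packages everything into a single map $\psi(g)=x^{-1}gxs$ (with $xx^{-1}=e$, $x^{-1}x=\ran(s)$) directly from $\{g\leq e:g\D f\}$ to $\{t\leq s:t\D f\}$, and then verifies well-definedness, injectivity, and surjectivity via fairly long element computations. Your two-step factorization through $\{h\in E(S):h\leq\dom(s)\}$ is cleaner: Step~1 is pure bookkeeping via Theorem~\ref{ThmBasicProps}(vii), and Step~2 is the bare conjugation $h\mapsto zhz^{-1}$ between idempotents, where the mutual-inverse check and the $\D$-preservation via the witness $zh$ are each one-line computations. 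In effect you have decomposed the paper's $\psi$ into its two natural factors, which buys shorter and more transparent verifications at no real cost.
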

\begin{proof}
 Let $f\leq e$, so $f$ is idempotent. Let $x\in S$ such that $xx^{-1}=e$ and $x^{-1}x=ss^{-1}$. Define $$\psi:\{g\in S:g\leq e \up{ and }g\D f\}\rightarrow\{t\in S:t\leq s \up{ and } t\D f\}$$ by $\psi(g)=x^{-1}gxs.$ We claim that $\psi$ is a bijection. 

First, to show that the codomain of $\psi$ really is 
$\{t:t\leq s \up{ and } t\D f\}$, let $g\leq e$ (so $g$ is idempotent and $g=eg=ge$) and $g\D f$. Note that $x^{-1}gx$ is idempotent (by part \eqref{simple7} of Theorem \ref{ThmBasicProps}), so $\psi(g)\leq s$. We need to show that $\psi(g)\D f$. We have that
\begin{align*}
\ran(\psi(g))&=\psi(g)\psi(g)^{-1}\\
&=x^{-1}gx(ss^{-1})x^{-1}gx\\
&=x^{-1}gx(x^{-1}x)x^{-1}gx\\
&=x^{-1}g(xx^{-1})(xx^{-1})gx\\
&=x^{-1}(xx^{-1})(xx^{-1})ggx\\
&=x^{-1}gx,
\end{align*}
so we need to show that there exists $z\in S$ such that $zz^{-1}=x^{-1}gx$ and $z^{-1}z=f$. Since $g\D f$, let $y\in S$ such that $yy^{-1}=g$ and $y^{-1}y=f$. Let $z=x^{-1}y$.  
Then
$
zz^{-1}=x^{-1}yy^{-1}x=x^{-1}gx,
$
as desired. Note that $y=yy^{-1}y=gy$, so we also have
$$
z^{-1}z =y^{-1}xx^{-1}y
=y^{-1}ey
=y^{-1}egy
=y^{-1}gy
=y^{-1}y
=f,
$$
so $z^{-1}z=f$, as desired. Thus $\psi(g)\D f$ and the codomain of $\psi$ is as claimed.

Next, to show $\psi$ is injective, suppose $g_1,g_2\leq e$ (so $eg_1=g_1$ and $eg_2=g_2$) and $\psi(g_1)=\psi(g_2)$. Then 
\begin{align*}
x^{-1}g_1xs&=x^{-1}g_2xs\\
\implies x^{-1}g_1xss^{-1}&=x^{-1}g_2xss^{-1}\\
\implies x^{-1}g_1xx^{-1}x&=x^{-1}g_2xx^{-1}x\\
\implies x^{-1}g_1x&=x^{-1}g_2x\\
\implies xx^{-1}g_1xx^{-1}&=xx^{-1}g_2xx^{-1}\\
\implies eg_1e&=eg_2e\\
\implies eeg_1&=eeg_2\\
\implies eg_1&=eg_2\\
\implies g_1&=g_2.
\end{align*}

Finally, to show that $\psi$ is surjective, let $t\leq s$ and $t\D f$. Then $t=gs$ for some $g\in E(S)$, and there exists $y\in S$ such that $yy^{-1}=tt^{-1}$ and $y^{-1}y=f$. We claim that $xgx^{-1}\leq e$, $xgx^{-1}\D f$, and  $\psi(xgx^{-1})=t$. To see why, first note that $xgx^{-1}$ is idempotent and 
$xgx^{-1} =xgx^{-1}xx^{-1} =xgx^{-1}e$,
so $xgx^{-1}\leq e$. Next, we show that $xgx^{-1}\D f$ by showing that there exists $z\in S$ such that $zz^{-1}=xgx^{-1}$ and $z^{-1}z=f$. If we take $z=xy$, then we have
\begin{align*}
zz^{-1} &=xyy^{-1}x^{-1}\\
&=x tt^{-1} x^{-1}\\
&=x gss^{-1}g x^{-1}\\
&=x g x^{-1}x g x^{-1}\\
&=xggx^{-1}xx^{-1}\\
&=xgx^{-1}.
\end{align*}
Note that $tt^{-1}=gss^{-1}g=gx^{-1}xg=gx^{-1}x$, which implies that 
$$y=yy^{-1}y=tt^{-1}y=gx^{-1}xy.$$ Note also that $y^{-1}=y^{-1}yy^{-1}=fy^{-1}$. Therefore we  have
\begin{align*}
z^{-1}z&=y^{-1}x^{-1}xy\\
&=fy^{-1}x^{-1}xy\\
&=fy^{-1}x^{-1}xgx^{-1}xy\\
&=fy^{-1}gx^{-1}xx^{-1}xy\\
&=fy^{-1}gx^{-1}xy\\
&=fy^{-1}y\\
&=ff\\
&=f,
\end{align*}
so $xgx^{-1} \D f$. Finally we show that $\psi(xgx^{-1})=t$. We have 
$\psi(xgx^{-1}) = x^{-1} xgx^{-1} xs
=gx^{-1}xx^{-1}xs
=gss^{-1}ss^{-1}s
=gs =t.$
Thus $\psi$ is surjective, completing the proof.
\end{proof}

Combining Proposition \ref{PropLegitPartition} and part \eqref{simple6} of Theorem \ref{ThmBasicProps}, we obtain the following result.

\begin{thm}
\label{ThmLegitPartition}
Let $e_1, e_2, \in E(S)$ with $e_1 \D e_2$. Suppose $f\in E(S)$ with $f\leq e_1$. Then
\[
|h\in E(S):h\leq e_1 \up{ and }h\D f| = |h\in E(S):h\leq e_2 \up{ and }h\D f|.
\]
\end{thm}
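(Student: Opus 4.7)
The plan is to obtain Theorem \ref{ThmLegitPartition} as an immediate consequence of Proposition \ref{PropLegitPartition} together with part \eqref{simple6} of Theorem \ref{ThmBasicProps}, as the concluding sentence of the excerpt already foreshadows. The idea is that Proposition \ref{PropLegitPartition} is a purely semigroup-theoretic statement about arbitrary elements $s$ lying in the $\D$-class of an idempotent $e$, whereas Theorem \ref{ThmLegitPartition} is the specialization in which $s$ itself is taken to be an idempotent. Under that specialization, the sets of elements $t \leq s$ appearing in the proposition collapse to sets of idempotents below $s$, which is exactly the conclusion required.

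Concretely, first I would apply Proposition \ref{PropLegitPartition} with $s = e_2$ and $e = e_1$. Symmetry of the $\D$-relation ensures $e_2 \D e_1$, so the hypothesis $s \D e$ is met, and $f \leq e_1$ is given. The proposition then yields
\[
|\{t \in S : t \leq e_2 \text{ and } t \D f\}| = |\{g \in S : g \leq e_1 \text{ and } g \D f\}|.
\]
Next, I would use part \eqref{simple6} of Theorem \ref{ThmBasicProps}, which says that any element of $S$ lying below an idempotent is itself idempotent. Since $e_1, e_2 \in E(S)$, every $t \leq e_2$ automatically satisfies $t \in E(S)$, and likewise every $g \leq e_1$ lies in $E(S)$. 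Hence the two sets above coincide with $\{h \in E(S) : h \leq e_2 \text{ and } h \D f\}$ and $\{h \in E(S) : h \leq e_1 \text{ and } h \D f\}$ respectively, giving the desired equality.

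There is no real obstacle here: once Proposition \ref{PropLegitPartition} is in hand, the theorem is formal. All of the work has been absorbed into the explicit bijection $\psi(g) = x^{-1} g x s$ used in the proof of the proposition; the present theorem merely records the special case relevant to the discussion of $\D$-partitions of $E(S)$. In particular, combining this with the fact that $\D$ restricted to $E(S)$ is an equivalence relation immediately shows that the restriction of $\D$ to $E(S)$ is a $\D$-partition in the sense of the definition preceding the theorem, which is the main payoff of the section.
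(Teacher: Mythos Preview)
Your proposal is correct and follows precisely the paper's own argument: the paper states the theorem as an immediate consequence of Proposition~\ref{PropLegitPartition} combined with part~\eqref{simple6} of Theorem~\ref{ThmBasicProps}, and your specialization $s=e_2$, $e=e_1$ together with the observation that elements below idempotents are idempotent is exactly that combination.
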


\section{Enumeration via the Ehresmann-Schein-Nambooripad theorem}

\label{SecESN}

\subsection{The ESN theorem}

\label{SubSecESN}

The Ehresmann-Schein-Nambooripad (ESN) theorem provides an isomorphism between the category of inverse semigroups and homomorphisms and the category of inductive groupoids and inductive functors. See, e.g., \cite[Ch.\ 4]{Lawson} and \cite{E,N,S}. In this section we review the portion of the ESN theorem we need through the lens of B.\ Steinberg's construction of an isomorphism between the algebra of a finite inverse semigroup and a direct sum of matrix algebras over group algebras \cite{Steinberg2}. 

Let $r_1 ,\ldots, r_k$ be positive integers and let $G_1 ,\ldots, G_k$ be finite groups. Let $A$ denote the algebra 
\[
A = \bigoplus_{i=1}^k M_{r_i}(\C G_i).
\]
The {\em natural basis} $B$ of $A$ is the set of matrices in $A$ having a single group element in one position and $0$ elsewhere. Let $E(B)$ denote the set of idempotents of $B$. It is easy to see that $E(B)$ consists precisely of the elements of $B$ having a group identity on the diagonal.
Even though $B$ is not generally an inverse semigroup, for $b\in B$ let $b^{-1}$ denote the matrix obtained by taking the transpose of $b$ and replacing the group element in $b$ with its (group) inverse. It is easy to see that $bb^{-1}b=b$ and $b^{-1}bb^{-1}=b^{-1}$, so $bb^{-1}$ and $b^{-1}b$ are idempotent.
We continue to use the notation 
\[
\dom(b) = b^{-1}b
\]
and
\[
\ran(b) = bb^{-1},
\]
so for $a,b\in B$, $ab$ is nonzero if and only if $\dom(a)=\ran(b)$. 


First we review how the ESN theorem allows us to construct inverse semigroups from $B$.

\begin{thm}[ESN theorem pt.\ 1]
\label{ThmESN}
Let $B$ be the natural basis of the algebra $\bigoplus_{i=1}^k M_{r_i}(\C G_i)$ and let $0$ denote the zero matrix. Let $\leq$ be any partial order on $B$ satisfying the following properties.
\begin{enumerate}[(i)]
	\item $\leq$ restricted to $E(B)$ forms a meet-semilattice. \label{ESNcondition1}
	\item $\forall s,t\in B$, if $s\leq t$ then $s^{-1}\leq t^{-1}$. \label{ESNcondition2}
	\item $\forall s,t,y,z\in B$, if $s\leq y$, $t\leq z$, $s t\neq 0$, and $y z \neq 0$, then $s t \leq y z$. \label{ESNcondition3}
	\item $\forall e,s\in B$, if $e\leq \dom(s)$, then $\exists!t\leq s$ such that $\dom(t)=e$. \label{ESNcondition4}
	\item $\forall e,s\in B$, if $e\leq \ran(s)$, then $\exists!t\leq s$ such that $\ran(t)=e$. \label{ESNfinalcondition}
\end{enumerate}
For $b\in B$, let
\[
\overline b = \sum_{a\in B: a\leq b} a.
\]
Then $\overline{B} = \{\overline b :b\in B\}$ is an inverse semigroup under matrix multiplication, with 
\begin{equation}
\label{eqSize}
|\overline{B}| = \sum_{i=1}^k r_i^2 |G_i|.
\end{equation}
Furthermore $E(\overline B)=\{\bar e : e\in E(B)\}$ and the map $E(B) \rightarrow E(\overline B)$ given by $e\mapsto \bar e$ is an isomorphism of posets. If $B_i$ denotes the natural basis of $M_{r_i}(\C G_i)$, then the $\D$-classes of $\overline B$ are $D_1 ,\ldots, D_k$, with $D_i = \{\bar b: b\in B_i\}$. If $e\in E(B)\cap B_i$, we have $G_{\bar e} \cong G_i$. Finally, the natural partial order $\leq$ on $\overline B$ is given by $\overline s\leq \overline t \iff s\leq t$.
\end{thm}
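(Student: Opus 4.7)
The plan is to first show that $b \mapsto \overline{b}$ is a bijection $B \to \overline{B}$, then to establish closure of $\overline{B}$ under matrix multiplication via a ``restricted product'' on $B$, and finally to read off the structural claims. Since $B$ is a basis of $A$, the elements of $B$ are linearly independent, so $\overline{b_1} = \overline{b_2}$ forces $\{a \in B : a \leq b_1\} = \{a \in B : a \leq b_2\}$; as $b_i$ is the maximum of this set, $b_1 = b_2$. Hence $|\overline{B}| = |B| = \sum_{i=1}^k r_i^2 |G_i|$, which gives \eqref{eqSize}.

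For closure I would introduce the following partial product on $B$. Given $s, t \in B$, let $e = \dom(s) \meet \ran(t)$ in $E(B)$. By hypotheses \ref{ESNcondition4} and \ref{ESNfinalcondition} there are unique $s_1 \leq s$ and $t_1 \leq t$ with $\dom(s_1) = \ran(t_1) = e$; the matrix product $s_1 t_1$ lies in $B$ and is nonzero, so set $s \star t := s_1 t_1$. The core claim is
\[
\overline{s}\,\overline{t} = \overline{s \star t},
\]
which I would prove by showing that $(a,b) \mapsto ab$ is a bijection between $\{(a,b) : a \leq s,\ b \leq t,\ ab \neq 0\}$ and $\{c \in B : c \leq s \star t\}$. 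The forward direction uses \ref{ESNcondition3}: if $a \leq s$, $b \leq t$, and $ab \neq 0$, then $\dom(a) = \ran(b) \leq e$, so \ref{ESNcondition4} and \ref{ESNfinalcondition} force $a \leq s_1$ and $b \leq t_1$, whereupon \ref{ESNcondition3} gives $ab \leq s_1 t_1 = s \star t$. For the inverse direction, given $c \leq s \star t$, take $a$ to be the unique element $\leq s$ with $\ran(a) = \ran(c)$ and $b$ the unique element $\leq t$ with $\dom(b) = \dom(c)$ (using \ref{ESNcondition4} and \ref{ESNfinalcondition} again), and verify $\dom(a) = \ran(b)$ and $ab = c$. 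I expect this bijection, particularly the verification $ab = c$ and its uniqueness, to be the main obstacle, since it is where all five axioms interact.

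With closure in hand, associativity of $\overline{B}$ is inherited from the ambient matrix algebra $A$. A direct calculation shows $b \star b^{-1} = \ran(b)$ and $\ran(b) \star b = b$, which together with hypothesis \ref{ESNcondition2} gives $\overline{b}\,\overline{b^{-1}}\,\overline{b} = \overline{b}$ and its symmetric counterpart; thus every $\overline{b}$ has an inverse in $\overline{B}$. The idempotents of $\overline{B}$ are exactly $\{\overline{e} : e \in E(B)\}$: for $e \in E(B)$, $e \star e = e$, while $(\overline{b})^2 = \overline{b}$ forces $b \star b = b$, hence $\dom(b) = \ran(b)$, hence $b \in E(B)$. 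Because $(E(B),\leq)$ is a meet-semilattice by \ref{ESNcondition1} and $\overline{e}\,\overline{f} = \overline{e \star f} = \overline{e \meet f}$, the idempotents of $\overline{B}$ commute, so inverses are unique and $\overline{B}$ is an inverse semigroup. The same formula shows $e \mapsto \overline{e}$ is a poset isomorphism $E(B) \to E(\overline{B})$.

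For the remaining identifications: products of natural basis elements across distinct blocks $M_{r_i}(\C G_i)$ and $M_{r_j}(\C G_j)$ of $A$ vanish, so every $\D$-class of $\overline{B}$ lies inside some $\{\overline{b} : b \in B_i\}$. Conversely, for $b_1, b_2 \in B_i$, the matrix-unit structure of $B_i$ provides an $x \in B_i$ with $\dom(x) = \ran(b_2)$ and $\ran(x) = \ran(b_1)$; since $\dom(\overline{x}) = \overline{\dom(x)}$ and $\ran(\overline{x}) = \overline{\ran(x)}$ (a consequence of the $\star$ formula), we get $\overline{b_1} \D \overline{b_2}$, and $D_i = \{\overline{b} : b \in B_i\}$. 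For $e \in E(B) \cap B_i$, the maximal subgroup $G_{\overline{e}}$ consists of the $\overline{b}$ with $\dom(\overline{b}) = \ran(\overline{b}) = \overline{e}$; via $b \leftrightarrow \overline{b}$ these correspond to the elements of $B_i$ supported at the single $(j,j)$-position indexed by $e$ with arbitrary group element from $G_i$, giving $G_{\overline{e}} \cong G_i$. Finally, $\overline{s} \leq \overline{t}$ in $\overline{B}$ means $\overline{s} = \overline{e}\,\overline{t} = \overline{e \star t}$ for some $e \in E(B)$, which by injectivity of $b \mapsto \overline{b}$ and the definition of $\star$ is equivalent to $s \leq t$ in $B$.
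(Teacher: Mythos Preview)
The paper does not actually prove Theorem~\ref{ThmESN}. It is presented as part of the classical Ehresmann--Schein--Nambooripad theorem, with references to \cite[Ch.~4]{Lawson} and \cite{E,N,S}, and the exposition in Section~\ref{SubSecESN} is framed as a review ``through the lens of B.~Steinberg's construction.'' Only the companion Theorem~\ref{ThmESN2} receives a proof in the paper (in Section~\ref{SecESN2Pf}). So there is no paper proof to compare your proposal against.

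That said, your outline is the standard direct verification and is essentially correct. The key step, as you correctly identify, is the bijection underlying $\overline{s}\,\overline{t}=\overline{s\star t}$. Your sketch of the inverse map (``take $a$ the unique element $\leq s$ with $\ran(a)=\ran(c)$, $b$ the unique element $\leq t$ with $\dom(b)=\dom(c)$, and verify $ab=c$'') is right in spirit but is the place where care is needed: one must argue that $\dom(a)=\ran(b)$, which is not immediate from those two choices alone. A cleaner route is to start from $c\leq s_1t_1$, use \ref{ESNcondition4} to pick $b'\leq t_1$ with $\dom(b')=\dom(c)$, then use \ref{ESNcondition4} again to pick $a'\leq s_1$ with $\dom(a')=\ran(b')$; then $a'b'\leq s_1t_1$ with $\dom(a'b')=\dom(c)$, so $a'b'=c$ by uniqueness in \ref{ESNcondition4}, and finally $a'=a$, $b'=b$ by uniqueness in \ref{ESNcondition4}--\ref{ESNfinalcondition}. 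Two minor points: in your idempotent argument, ``$\dom(b)=\ran(b)$, hence $b\in E(B)$'' needs the extra line that $b\star b=b$ then forces $b^2=b$ in the matrix algebra, whence the group entry is the identity; and in the partial-order argument you implicitly use that any $e_1\leq e$ with $e\in E(B)$ and $\dom(e_1)=f$ must equal $f$, which follows from uniqueness in \ref{ESNcondition4} since $f\leq e$ already has $\dom(f)=f$.
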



The ESN theorem also asserts that the construction in Theorem \ref{ThmESN} is sufficient to construct any finite inverse semigroup $S$ up to isomorphism. In particular, we have the following.

\begin{thm}[ESN theorem pt.\ 2]
\label{ThmESN2}
Suppose $S$ is a finite inverse semigroup and the partition of $E(S)$ obtained by restricting Green's $\D$-relation on $S$ to $E(S)$ is $\{X_1 ,\ldots, X_k\}$. Suppose $|X_i|=r_i$ and
$G_{X_i} \cong G_i$ for all $i\in \{1 ,\ldots, k\}$. Let $\sqsubseteq$ be a partial order on $E(B)$ for which $(E(S),\leq) \cong (E(B),\sqsubseteq)$. Then $S$ may be obtained, up to isomorphism, from the construction of Theorem \ref{ThmESN} for some partial order on $B$ which restricts to $\sqsubseteq$ on $E(B)$. 
\end{thm}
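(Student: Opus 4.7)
The plan is to construct an explicit bijection between $S$ and the natural basis $B$, transport the natural partial order from $S$ to $B$, verify the ESN conditions of Theorem~\ref{ThmESN}, and identify $\overline{B}$ with $S$.

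First I would set up coordinates on each $\D$-class using the hypothesized poset isomorphism $\alpha:(E(S),\leq)\to(E(B),\sqsubseteq)$. For each $i$, label the idempotents of $X_i$ as $e_{i,1},\ldots,e_{i,r_i}$ so that $\alpha(e_{i,a})$ is the diagonal idempotent $f_{i,a}$ in position $(a,a)$ of the $i$-th block of $B$. Fix a group isomorphism $\iota_i:G_{e_{i,1}}\to G_i$, and for each $a$ choose a ``connector'' $x_{i,a}\in S$ with $\ran(x_{i,a})=e_{i,a}$ and $\dom(x_{i,a})=e_{i,1}$ (this exists because $e_{i,1}\D e_{i,a}$), taking $x_{i,1}=e_{i,1}$.

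Next I would define $\phi:S\to B$ using Green-theoretic coordinates: every $s$ in the $\D$-class $D_i$ admits a unique expression $s=x_{i,a}\,g\,x_{i,b}^{-1}$ with $g\in G_{e_{i,1}}$, $e_{i,a}=\ran(s)$, and $e_{i,b}=\dom(s)$; send $s$ to the basis element in block $i$ with $\iota_i(g)$ in position $(a,b)$. This is a bijection since $|D_i|=r_i^2|G_i|=|B_i|$, and it agrees with $\alpha$ on $E(S)$. Now transport the partial order: $\phi(s)\leq\phi(t)\iff s\leq t$. Under $\phi$ the operations $\dom$ and $\ran$ correspond, and for $a,b\in B$ the matrix product $ab$ is nonzero precisely when $\dom(a)=\ran(b)$, in which case $ab=\phi(\phi^{-1}(a)\phi^{-1}(b))$. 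Conditions (i), (ii), (iv), and (v) of Theorem~\ref{ThmESN} transfer immediately from Theorem~\ref{ThmBasicProps}; condition (iii) follows from part~(vi) of Theorem~\ref{ThmBasicProps} together with this matrix-multiplication observation.

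Finally I would verify that $\overline{B}\cong S$ via $\overline{\phi(s)}\mapsto s$. Bijectivity is built in, so the content is multiplicativity,
\[
\sum_{\substack{u\leq s,\,v\leq t\\ \dom(u)=\ran(v)}} \phi(u)\phi(v)\;=\;\sum_{w\leq st}\phi(w),
\]
which reduces to the semigroup-theoretic assertion that $(u,v)\mapsto uv$ is a bijection from the compatible pairs $\{(u,v):u\leq s,\,v\leq t,\,\dom(u)=\ran(v)\}$ onto $\{w:w\leq st\}$. The forward map lands in the target by Theorem~\ref{ThmBasicProps}(vi). The inverse sends $w\leq st$ to $(u,v)=(ww^{-1}s,\,s^{-1}w)$; using parts~(vii) and~(viii) of Theorem~\ref{ThmBasicProps} and the fact that $ww^{-1}\leq ss^{-1}$, one checks $u\leq s$ and $v\leq t$ with common middle idempotent $\dom(u)=\ran(v)=s^{-1}(ww^{-1})s$, and $uv=(ww^{-1})(ss^{-1})w=w$. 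Carrying out this restriction calculation cleanly and uniformly across all pairs $s,t$ is the main technical obstacle, but it is essentially the standard restriction/pseudoproduct computation underlying the inductive-groupoid view of inverse semigroups.
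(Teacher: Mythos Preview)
Your argument is correct, and the core construction---choosing connectors $x_{i,a}$ with prescribed domain and range and using the coordinates $s=x_{i,a}\,g\,x_{i,b}^{-1}$ to define a bijection $S\to B$---is exactly the paper's construction: the paper's $p_e$ are your $x_{i,a}$ and the paper's map $\Phi_i(\lfloor s\rfloor)=p_{\ran(s)}^{-1}sp_{\dom(s)}E_{\ran(s),\dom(s)}$ is the inverse of your $\phi$ on each $\D$-class.

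The difference is in packaging. The paper embeds everything in the semigroup algebra $\mathbb{C}S$, invokes Steinberg's groupoid basis $\lfloor s\rfloor=\sum_{t\leq s}\mu(t,s)t$, and recovers $s=\sum_{t\leq s}\lfloor t\rfloor$ by M\"obius inversion; the identification $\overline{B}\cong S$ then falls out of the fact that $\Omega\circ\Phi$ is an algebra isomorphism. You bypass the algebra entirely and prove the same identification by the bare-hands bijection $\{(u,v):u\leq s,\,v\leq t,\,\dom(u)=\ran(v)\}\to\{w:w\leq st\}$, $(u,v)\mapsto uv$, with explicit inverse $w\mapsto(ww^{-1}s,\,s^{-1}w)$. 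Your route is more elementary and self-contained; the paper's route makes the connection to Steinberg's algebra isomorphism explicit and gets the multiplicativity of $\overline{b}$ for free from that isomorphism rather than having to exhibit the bijection by hand. A minor further difference: you build the target order $\sqsubseteq$ in from the outset via $\alpha$, while the paper first produces one admissible order and then conjugates by a relabeling at the end.
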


Theorem \ref{ThmESN2} is constructive. Our statement of Theorem \ref{ThmESN2} is perhaps nonstandard. We include a proof in Section \ref{SecESN2Pf}.

%

\subsection{Our enumeration algorithms}
\label{SecAlgms}

We take $\N=\{1, 2, \ldots\}$.
Recall that a {\em composition} of $m\in \N$ is a list of positive integers whose sum is $m$. An {\em integer partition} (or just {\em partition}) of $m\in \N$ is a composition of $m$ whose elements are in weakly decreasing order. If $\lambda$ is a partition or composition, denote the number of entries of $\lambda$ (also called the {\em length} of $\lambda$) by $|\lambda|$ and denote the $i$th entry of $\lambda$ by $\lambda_i$. 
Any set partition $P=\{X_1 ,\ldots, X_k\}$ of a finite set $X$ gives rise to a partition $\lambda$ of $|X|$ called the {\em shape} of $P$. Specifically, by relabeling if necessary we may assume $|X_1|\geq \cdots \geq |X_k|$; then the shape of $P$ is the partition $(|X_1| ,\ldots, |X_k|)$.

\begin{defn} If $m,n\in \N$, $\lambda$ is a partition of $m$, and $C$ is a composition with $|C|=|\lambda|$, we say $C$ is an {\em admissible composition} for $(n,\lambda)$ if 
\[
n=\sum_{i=1}^{|\lambda|}\lambda_i^2 C_i.
\]
\end{defn}
For instance, the admissible compositions for $(10,(2,1))$ are $(2,2)$ and $(1,6)$, and the only admissible composition for $(n,(\underbrace{1 ,\ldots, 1}_n))$ is $(\underbrace{1 ,\ldots, 1}_n)$.

%
%

We shall write the pseudocode for our algorithms using \texttt{Python}-esque syntax. In particular $[\up{ }]$ denotes a new empty list, $\tt{dict}([ \up{ }])$ denotes a new empty dictionary, $[x]$ denotes a new list containing $x$, and if $x$ is a key in a dictionary $d$, then $d[x]$ denotes the associated value. A single equals sign denotes variable assignment, and nonempty lists are indexed beginning at $0$. We now give the specifications for the functions we shall use in our enumeration algorithms. 

\begin{itemize}

	\item \texttt{Groups($n$)} accepts $n\in \N$, chooses a representative from each isomorphism class of the groups of order not exceeding $n$, and returns this set of representatives. This function can be computed easily with standard computational mathematical suites such as \texttt{Sage} \cite{sage} or \texttt{GAP} \cite{GAP4}.

	\item \texttt{Partitions($m$)} accepts $m\in \N$ and returns the set of partitions of $m$. 
	\item \texttt{AdmissibleCompositions($n$,$\lambda$)} accepts $n\in \N$ and a partition $\lambda$ of a positive integer not exceeding $n$, and returns the set of admissible compositions for $(n,\lambda)$.
	\item \texttt{MeetSemilattices($m$)} accepts $m\in \N$ and returns the set of meet-semilattices of order $m$ up to isomorphism. The algorithm of Heitzig and Reinhold for enumerating finite lattices up to isomorphism \cite{CountingFiniteLattices} may be used to implement this function by returning the lattices of order $m+1$ up to isomorphism with their maximal elements removed.
	\item \texttt{DPartitions($E$,$\lambda$)} accepts a finite meet-semilattice $E$ and a partition $\lambda$ of $|E|$, and returns the $\D$-partitions of $E$ of shape $\lambda$ as ordered tuples. Specifically, if $P=\{X_1 ,\ldots, X_k\}$ is a $\D$-partition of $E$ of shape $\lambda$, then by relabeling if necessary we may assume $|X_1|\geq \cdots \geq |X_k|$. This function outputs $(X_1 ,\ldots, X_k)$ for $P$.
	\item \texttt{GroupMaps($P$,$C$,$L$)} accepts a composition $C$, a tuple $P=(X_1 ,\ldots, X_{|C|})$ of length $|C|$, and a set $L$ of groups, and returns the set of all functions $f: \{X_1 ,\ldots, X_{|C|}\} \rightarrow L$ such that $|f(X_i)|=C_i$ for all $i\in \{1 ,\ldots, |C|\}$.
	\item Write $()$ for the identity of any group. \texttt{EGroupoid($E$,$P$,$f$)} accepts a finite meet-semilattice $E$, a $\D$-partition $P=(X_1 ,\ldots, X_k)$ of $E$ (with $|X_1|\geq \cdots \geq |X_k|$), and a function $f$ for which $f(X_i)$ is a finite group for all $i\in \{1 ,\ldots, k\}$, and returns $(B,\leq_{E(B)})$, where $B$ is the natural basis of the algebra $\bigoplus_{i=1}^k M_{|X_i|}(\C f(X_i))$ (where each block is indexed by the elements of $X_i$), and $\leq_{E(B)}$ is the partial order on $E(B)$ given by, for $a,b\in E$,
	\[
	()_{a,a} \leq_{E(B)} ()_{b,b} \iff a\leq b.
	\]
	\item \texttt{GPosets($G$)} accepts an output $(B,\leq_{E(B)})$ of \texttt{EGroupoid}, and returns the set of partial orders $\leq$ on $B$ which restrict to $\leq_{E(B)}$ on $E(B)$ and meet the hypotheses of Theorem \ref{ThmESN}. An implementation of this function is given in Section \ref{SecPartialOrders}.
	\item \texttt{ESN}($G$,$\leq$) accepts an output $G$ of \texttt{EGroupoid} an output $\leq$ of \texttt{GPosets}, and returns the inverse semigroup $S$ obtained from the construction of Theorem \ref{ThmESN} applied to $(G,\leq)$, after renaming $\overline{()_{e,e}}$ as $e$ for all $e\in E$. 
	\item \texttt{Invariants}($S$) accepts a finite inverse semigroup $S$ and returns a tuple $I(S)$ having the property that, for inverse semigroups $S$, $T$, if $E(S)=E(T)$ and $S\cong T$, then $I(S)=I(T)$. An implementation of this function is given in Section \ref{SecInvariants}.
	\item \texttt{IsNew}($S$,$I$,\textup{isgs}) accepts a finite inverse semigroup $S$, $I$=\texttt{Invariants}($S$), and a dictionary \textup{isgs}. If the key $I$ is not present in \textup{isgs}, this function returns True. On the other hand, if the key $I$ is present in \textup{isgs}, then this function returns False if $S$ is isomorphic to some inverse semigroup in the list \textup{isgs}[$I$], and returns True otherwise. An implementation of this function, incorporating our implementation of \texttt{Invariants}, is given in Section \ref{SecIsNew}.
	\item \texttt{Output}(\textup{isgs}) accepts a dictionary \textup{isgs} and outputs to file (or console) every element of \textup{isgs}[$I$], for every key $I$ in \textup{isgs}.
\end{itemize}

\begin{alg}\label{AlgMain}Algorithm for enumerating the inverse semigroups of order $n$ up to isomorphism.
\lstset{ %
basicstyle=\footnotesize,       
numbers=left,                   
numberstyle=\footnotesize,      
stepnumber=1,                   
numbersep=5pt,                  
backgroundcolor=\color{white},  
showspaces=false,               
showstringspaces=false,         
showtabs=false,                 
frame=single,           
tabsize=2,          
captionpos=b,           
breaklines=true,        
breakatwhitespace=false,    
escapeinside={\*}{*}         
}

\begin{lstlisting}
Input:*$n$*

*$\textup{Gn}=\texttt{Groups($n$)}$ \label{MakeGpsStep}*

for *$m$* in *\{1 ,\ldots, n\}*: *\label{OneThruNStep}*
	for *$\lambda$* in *$\texttt{Partitions}(m)$*:
		*$S_\lambda=\texttt{AdmissibleCompositions}(n,\lambda)$*

	for *$E$* in *$\texttt{MeetSemilattices}(m)$*: *\label{MainStepE}*
		for *$\lambda$* in *$\texttt{Partitions}(m)$*: *\label{MainStepLambdaInPartitions}*
			*$\textup{isgs}=\tt{dict}([\up{ }])$* *\label{NewDictStep}*
			for *$C$* in *$S_\lambda$*: *\label{AdmissIterStep}*
				for *$P$* in *$\texttt{DPartitions}(E,\lambda)$*: *\label{DPartitionsStep}*
					for *$f$* in *$\texttt{GroupMaps}(P,C,\textup{Gn})$*:
						*$G=\texttt{EGroupoid}(E,P,f)$*
						for *$\leq$* in *$\texttt{GPosets}(G)$*:
							*$S=\texttt{ESN}(G,\leq)$*
							*$I$*=*$\texttt{Invariants}(S)$* *\label{IsomFirstStep}*
							if *$\texttt{IsNew}(S,I,\textup{isgs})$*:
								if *$I$* *$\textup{in}$* *$\textup{isgs}$*:
									*$\textup{isgs}[I]$*.*$\textup{append}(S)$*
							  else:
							  	*$\textup{isgs}[I]=[S]$* *\label{IsomLastStep}*
			*$\texttt{Output}(\textup{isgs})$*
	
\end{lstlisting}
\end{alg}

We note that Algorithm \ref{AlgMain} is easily parallelized at line \ref{MainStepE} by starting a new task for each meet-semilattice $E$. We also note that the renaming in the specification of $\texttt{ESN}$ guarantees that if $S$ and $T$ are inverse semigroups generated by this algorithm for the same meet-semilattice $E$ (that is, within the same pass of the loop beginning at line \ref{MainStepE}), then $E(S)=E(T)=E$.

\begin{thm}[Correctness of Algorithm \ref{AlgMain}]
The output of Algorithm \ref{AlgMain} is precisely the collection of inverse semigroups of order $n$ up to isomorphism.
\end{thm}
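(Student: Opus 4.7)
The plan is to verify three things: (i) every output is an inverse semigroup of order $n$; (ii) every inverse semigroup of order $n$ is isomorphic to some output; and (iii) no two outputs are pairwise isomorphic. Once these are established, the theorem follows.

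\textbf{Outputs have the right form.} Each semigroup emitted arises as $S=\texttt{ESN}(G,\leq)$, so by Theorem \ref{ThmESN} it is an inverse semigroup whose order equals $\sum_{i=1}^{|\lambda|}\lambda_i^2 C_i$ for the current loop values of $\lambda$ and $C$. Since $C\in S_\lambda=\texttt{AdmissibleCompositions}(n,\lambda)$, this sum equals $n$.

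\textbf{Completeness.} Given any inverse semigroup $S$ of order $n$, my plan is to trace through the nested loops and exhibit values $(m,E,\lambda,C,P,f,\leq)$ that produce a copy of $S$ up to isomorphism. Set $m=|E(S)|$; let $\{X_1,\ldots,X_k\}$ be the restriction of $\D$ to $E(S)$ indexed so that $|X_1|\geq\cdots\geq|X_k|$; set $\lambda=(|X_1|,\ldots,|X_k|)$ and $C_i=|G_{X_i}|$. Then $n=|S|=\sum\lambda_i^2 C_i$, so $C$ is admissible for $(n,\lambda)$, and $\texttt{MeetSemilattices}(m)$ returns some $E\cong E(S)$. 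Transporting $\{X_1,\ldots,X_k\}$ through an isomorphism $E(S)\to E$ yields a set partition $P$ of $E$ of shape $\lambda$; since the $\D$-partition condition is phrased purely in terms of the meet-semilattice order, Theorem \ref{ThmLegitPartition} implies that $P\in\texttt{DPartitions}(E,\lambda)$. The group assignment matching $C_i$ to a representative of the isomorphism class of $G_{X_i}$ appears in $\texttt{GroupMaps}(P,C,\texttt{Groups}(n))$. The key step is then to invoke Theorem \ref{ThmESN2}, which asserts that $S$ is isomorphic to $\texttt{ESN}(G,\leq)$ for some $\leq\in\texttt{GPosets}(G)$. Hence $S$ is isomorphic to at least one semigroup generated during the run, and the logic of lines \ref{IsomFirstStep}--\ref{IsomLastStep} ensures that exactly one representative of its isomorphism class ends up in $\textup{isgs}$ and is emitted by \texttt{Output}.

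\textbf{No duplicates, and the main obstacle.} Two outputs $S_1,S_2$ produced in different passes of the loop at line \ref{MainStepE} have, by the renaming in \texttt{ESN}, $E(S_i)$ literally equal to distinct meet-semilattices returned by \texttt{MeetSemilattices}; these are pairwise non-isomorphic, so $S_1\not\cong S_2$. If $S_1,S_2$ arise in the same $E$-pass but different $\lambda$-passes (line \ref{MainStepLambdaInPartitions}), then the shapes of the restrictions of $\D$ to $E(S_i)$ differ, and this shape is an isomorphism invariant by Theorem \ref{ThmIsomTest}; this is exactly what justifies the dictionary reset at line \ref{NewDictStep}. Finally, two outputs generated within the same $(E,\lambda)$-pass share the dictionary $\textup{isgs}$, and the specification of \texttt{IsNew} forces them to be non-isomorphic. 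The one genuinely nontrivial ingredient in the whole argument is Theorem \ref{ThmESN2} — the statement that the ESN construction, ranged over all admissible partial orders on $B$, exhausts every finite inverse semigroup up to isomorphism. Everything else is bookkeeping that the loop variables of Algorithm \ref{AlgMain} are precisely the data parameterizing the ESN construction, and that the invariants stratifying isomorphism classes remain constant within each loop level above the dictionary reset.
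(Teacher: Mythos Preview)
Your proof is correct and follows essentially the same approach as the paper's: both rely on Theorem \ref{ThmESN2} for completeness, Theorem \ref{ThmLegitPartition} to restrict attention to $\D$-partitions, and the invariance of $E(S)$ and the shape of $\D|_{E(S)}$ (via Theorem \ref{ThmIsomTest}) to justify the placement of the dictionary reset. The only difference is organizational---you structure the argument as ``correct form / completeness / no duplicates,'' whereas the paper proceeds line by line through the algorithm---but the logical content is the same.
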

\begin{proof}
Let $\G_n=\texttt{Groups}(n)$. By Theorem \ref{ThmESN2}, we may obtain the inverse semigroups of order $n$ up to isomorphism by iterating over all possible combinations of meet-semilattices $E$ of order $1 ,\ldots, n$ up to isomorphism, set partitions $P=\{X_1 ,\ldots, X_k\}$ of $E$ (for $k \in \{1,\ldots,|E|\}$), functions $f:P\rightarrow \G_n$, and partial orders $\leq$ on the natural basis $B$ of $A=\bigoplus_{i=1}^{k} M_{|X_i|}(\C f(X_i))$ (where the rows and columns of the $X_i$ block of $A$ are indexed by $X_i$) for which
\begin{itemize}
	\item $a\leq b \in E \iff ()_{a,a}\leq ()_{b,b} \in E(B)$,
	\item the hypotheses of Theorem \ref{ThmESN} are satisfied, and
	\item $n=\sum_{i=1}^{k} |X_i|^2 |f(X_i)|$,
\end{itemize}
and outputting each inverse semigroup $S$ afforded by the construction of Theorem \ref{ThmESN}, provided $S$ is not isomorphic to any previously-output inverse semigroup. We show that Algorithm \ref{AlgMain} is an implementation of this procedure. In particular, we must justify lines \ref{AdmissIterStep}, \ref{DPartitionsStep}, \ref{IsomFirstStep}--\ref{IsomLastStep}, and the placement of line \ref{NewDictStep} of Algorithm \ref{AlgMain}.

If $S$ and $T$ are isomorphic inverse semigroups then it is straightforward to verify that $E(S)\cong E(T)$, both as posets and as inverse semigroups under the meet operation. Therefore we only need to test newly-generated inverse semigroups $S$ against previously-generated inverse semigroups $T$ for which $E(S)=E(T)$. In particular, line \ref{NewDictStep} may be placed below line \ref{MainStepE}. Furthermore, if $S$ and $T$ are isomorphic finite inverse semigroups with $E(S)=E(T)=E$, and $P_1$ and $P_2$ are the set partitions of $E$ induced by the restrictions of $\D$ on $S$ and $T$, respectively, to $E$, then by parts \eqref{Isom3} and \eqref{Isom5} of Theorem \ref{ThmIsomTest}, $P_1$ and $P_2$ have the same shape.
We therefore also only need to test newly-generated inverse semigroups $S$ for isomorphism against previously-generated inverse semigroups $T$ for which the shape of $\D$ restricted to $E(S)$ is equal to the shape of $\D$ restricted to $E(T)$. In particular, the placement of line \ref{NewDictStep} is correct.

By Theorem \ref{ThmLegitPartition} we only need to consider $\D$-partitions of each meet-semilattice $E$, so line \ref{DPartitionsStep} is correct.

Let $S$ be a semigroup generated by the procedure in the first paragraph of the proof, generated from the parameters $P$ and $f$. By relabeling if necessary, suppose $|X_1| \geq \cdots \geq |X_k|$. Then the composition $(|f(X_1)|, \ldots, |f(X_i)|)$ is an admissible composition for $(n,(|X_1| ,\ldots, |X_k|))$. Therefore line \ref{AdmissIterStep} of the algorithm is correct. 

Lines \ref{IsomFirstStep}--\ref{IsomLastStep} of the algorithm sort the generated inverse semigroups according to their invariants for isomorphism testing, and therefore serve only to make the algorithm more efficient. In particular, since the placement of line \ref{NewDictStep} is correct, in lines \ref{IsomFirstStep}--\ref{IsomLastStep} any newly-generated inverse semigroup $S$ is tested for isomorphism against every previously-generated inverse semigroup $T$ to which $S$ could possibly be isomorphic. 
\end{proof}

\begin{rmk}
If one desires to enumerate only inverse monoids, a simple modification of Algorithm \ref{AlgMain} can be used to do so. To enumerate the inverse monoids of order $n$ instead of the inverse semigroups of order $n$, iterate over the lattices of order $1 ,\ldots, n$ instead of the meet-semilattices of order $1 ,\ldots, n$ on line \ref{MainStepE}. (A finite inverse semigroup $S$ is a monoid if and only if $E(S)$ is a lattice.)
\end{rmk}

\begin{rmk}
We comment on the idea behind the use of \texttt{AdmissibleCompositions} and line \ref{AdmissIterStep} in Algorithm \ref{AlgMain}. Given a finite meet-semilattice $E$, to iterate over the $\D$-partitions of $E$ it would suffice to iterate over all set partitions of $E$ and check which ones are $\D$-partitions. However for the purpose of generating the inverse semigroups of order $n$ this is highly inefficient, as the number of set partitions of $E$ grows rapidly as $|E|$ grows, and for a typical meet-semilattice $E$ of order $m\leq n$, the vast majority of set partitions $P$ of $E$ cannot serve to generate an inverse semigroup of order $n$ simply because of \eqref{eqSize}. In particular, if $P=\{X_1 ,\ldots, X_k\}$ is a $\D$ partition of $E$ for which there exists an inverse semigroup of order $n$ such that $E(S)=E$ and the restriction of $\D$ on $S$ to $E$ is $P$, then
\[
|S| = n = \sum_{i=1}^k |X_i|^2 \lambda_i
\]
for some $\lambda_1 ,\ldots, \lambda_k \in \N$ (namely, $\lambda_i = |G|$ for any group $G$ such that $G_{X_i}\cong G$). Indeed, if $m\in \{n,n-1\}$, then the {\em only} partition $P$ of $E$ we need to consider is the finest partition of $E$ (which is automatically a $\D$-partition). If $m\in \{n-2,n-3\}$ then we only need to consider partitions of $E$ of shape $(1 ,\ldots, 1)$ and $(2, 1 ,\ldots, 1)$, and so on. In general, since we only need to consider set partitions of $E$ whose shapes $\lambda$ have an admissible composition for $(n,\lambda)$, this reduces the amount of work (done by $\texttt{DPartitions}$) required to generate all possible $\D$-partitions of $E$ needed by the rest of the algorithm.
\end{rmk}

Now, let $E$ be a finite meet-semilattice, $P$ a $\D$-partition of $E$, and $f$ a function from $P$ to $\texttt{Groups}(n)$. Algorithm \ref{AlgMain} is easily modified to enumerate the inverse semigroups $S$ for which $E(S)=E$, $\D$ restricted to $E$ is equal to $P$, and $\forall e\in E$ $\forall X\in P$, $e\in X \implies G_e \cong f(X)$. In particular, we have the following.


\begin{alg} Algorithm for enumerating the inverse semigroups having a specified semilattice of idempotents $E$, restriction of $\D$ to $E$, and collection of maximal subgroups.
\label{AlgBasic}
\lstset{ %
basicstyle=\footnotesize,       
numbers=left,                   
numberstyle=\footnotesize,      
stepnumber=1,                   
numbersep=5pt,                  
backgroundcolor=\color{white},  
showspaces=false,               
showstringspaces=false,         
showtabs=false,                 
frame=single,           
tabsize=2,          
captionpos=b,           
breaklines=true,        
breakatwhitespace=false,    
escapeinside={\*}{*}         
}
\begin{lstlisting}
Input:*$E, P, f$*

*$\textup{isgs}=\tt{dict}([\up{ }])$*
*$G=\texttt{EGroupoid}(E,P,f)$*
for *$\leq$* in *$\texttt{GPosets}(G)$*:
	*$S=\texttt{ESN}(G,\leq)$*
	*$I$*=*$\texttt{Invariants}(S)$*
	if *$\texttt{IsNew}(S,I,\textup{isgs})$*:
		if *$I$* *$\textup{in}$* *$\textup{isgs}$*:
			*$\textup{isgs}[I]$*.*$\textup{append}(S)$*
	  else:
	  	*$\textup{isgs}[I]:=[S]$*
*$\texttt{Output}(\up{isgs})$*
	
\end{lstlisting}
\end{alg}

Note that Algorithm \ref{AlgBasic} is not strictly a subroutine of Algorithm \ref{AlgMain}, as in Algorithm \ref{AlgMain} a wider amount of isomorphism testing is necessary. In particular, in Algorithm \ref{AlgMain} any newly generated inverse semigroup must be tested for isomorphism against any previously generated inverse semigroup having the same underlying partition for the restriction of $\D$ to its semilattice of idempotents.



%
%


\section{\texttt{GPosets}}

\label{SecPartialOrders}

In this section we give an implementation of the \texttt{GPosets} function for Algorithms  \ref{AlgMain} and \ref{AlgBasic}. 

\begin{defn}
\label{defLevel}
Let $(Y,\leq)$ be a finite poset. The {\em down-levels} (or just {\em levels}) of $Y$ are defined inductively. Let $Y_1=Y$. For $i\in \N$, let $L_i$ consist of the maximal elements of $Y_i$, and let $Y_{i+1}$ be the poset obtained by removing $L_i$ from $Y_i$. Let $d$ be the first value of $i$ for which $L_{d+1}=\varnothing$. We call $\{L_1 ,\ldots, L_d\}$ the set of {\em down-levels} of $Y$. For $i\in \{1 ,\ldots, d\}$, the set $L_i$ is called the {\em $i$th down-level of $Y$}.  
\end{defn}

By definition, the levels of $Y$ form a partition of $Y$.

Let $(E,\leq)$ be a finite meet-semilattice and $P=\{X_1 ,\ldots, X_k\}$ a $\D$-partition of $E$. Let $G_1 ,\ldots, G_k$ be finite groups. For $i\in \{1 ,\ldots, k\}$, let us denote the natural basis of $M_{|X_i|}(\C G_i)$ by $B_i$. Let $B=\cup_{i=1}^k B_i$. Write $()$ for the identity of any group. Let $\leq_{E(B)}$ be the partial order on $B$ inherited from $(E,\leq)$---for $a,b\in E$,
\[
()_{a,a} \leq_{E(B)} ()_{b,b} \iff a\leq b.
\]

%
%

Denote the levels of $E$ be $L_1 ,\ldots, L_d$. If there exists an idempotent $()_{r,r}\in B_i$ such that $r\in L_j$, indicate this by writing $B_i \cap L_j \neq \varnothing$. It is possible to have $B_i \cap L_{j_1} \neq \varnothing$ and $B_i \cap L_{j_2} \neq \varnothing$ for distinct $L_{j_1},L_{j_2}$. Let $I(B_i) = \max \{j: B_i \cap L_j \neq \varnothing\}$. Define a linear order $\prec$ on the $B_i$ by setting 
\begin{equation}
\label{EqBiOrder}
\{B_i : I(B_i) = d\} \prec \{B_i: I(B_i) = d-1\} \prec \cdots \prec \{B_i:I(B_i)=1\},
\end{equation}
and then ordering the $B_i$ within each set in \eqref{EqBiOrder} arbitrarily. By relabeling if necessary, we may assume that $B_1\prec B_2 \prec \cdots \prec B_k$. If there exist idempotents $f\in B_i$ and $e\in B_j$ such that $f$ covers $e$, let us say that $B_i$ covers $B_j$.

With this ordering of the $B_i$ we will build the partial orders on $B$ that we seek by finding all extensions of the partial order $\leq_{E(B)}$ on $E(B)$ to $E(B) \cup B_1$, and then finding all extensions of such extensions to $E(B) \cup B_1 \cup B_2,$ and so on. Let $$\widehat B_i = E(B) \cup B_1 \cup \cdots \cup B_i.$$

Our implementation of $\texttt{GPosets}((B,\leq_{E(B)}))$ may be described as a depth-first search in the following search tree. 
Let $\leq_1 = \leq_{E(B)}$.
The root of our search tree is $(\widehat B_1,\leq_1,1)$. Nodes of our search tree are of the form $(\widehat B_i, \leq_{i}, i)$ (where $\leq_i$ is a partial order on $\widehat B_i$), for $i\in \{1 ,\ldots, k\}$, and for a node $N=(\widehat B_i, \leq_{i}, i)$ with $i<k$, the children of $N$ are given by the following algorithm, with sub-functions as specified following the algorithm.

\begin{alg} $\texttt{Children}((\widehat B_i,\leq_i,i))$
\label{AlgGPosetsChildren}
\lstset{ %
basicstyle=\footnotesize,       
numbers=left,                   
numberstyle=\footnotesize,      
stepnumber=1,                   
numbersep=5pt,                  
backgroundcolor=\color{white},  
showspaces=false,               
showstringspaces=false,         
showtabs=false,                 
frame=single,           
tabsize=2,          
captionpos=b,           
breaklines=true,        
breakatwhitespace=false,    
escapeinside={\*}{*}         
}
\begin{lstlisting}
Input:*$\widehat B_i, \leq_{i}, i$*

*$L=[\textup{ }]$*
for each *$B_{j}$* covered by *$B_{i+1}$*:
	*$\widetilde \leq = \texttt{PartialOrderRestriction}(\leq_{E(B)},{B_{i+1}\cup B_j})$*
	*$L$*.append(*$\texttt{PosetPossibilities}(B_{i+1},B_j,\widetilde \leq)$*)
for *$R$* in *$\texttt{CartesianProduct}(L)$*:
	*$\leq_{i+1} = \texttt{TransitiveClosure}(\leq_i,R)$*
	if *$\texttt{PassesCardinalityTest}(\widehat B_{i+1},\leq_{i+1},i)$*:
		yield *$(\widehat B_{i+1},\leq_{i+1},i+1)$*

	
\end{lstlisting}
\end{alg}

\begin{itemize}
\item $\texttt{PartialOrderRestriction}(\leq_{E(B)},{B_{i+1}\cup B_j})$ returns the restriction of the partial order $\leq_{E(B)}$ to $E(B)\cap \left(B_{i+1}\cup B_j\right)$.
	\item $\texttt{PosetPossibilities}(B_{i+1},B_j,\widetilde \leq)$ returns the set of partial orders $\leq$ on $B_{i+1} \cup B_j$ which restrict to the partial order $\widetilde \leq$ on $E(B)\cap \left(B_{i+1}\cup B_j \right)$, which restrict to equality on $B_j$ and on $B_{i+1}$, and for which
\begin{enumerate}
	\item[(R1)] \label{R1} $\forall s,t\in B_j \cup B_{i+1}$, if $s\leq t$, then $s^{-1}\leq t^{-1}$,
	\item[(R2)] \label{R2} $\forall s,t,y,z \in B_j \cup B_{i+1}$, if $s\leq y$, $t\leq z$, $s t\neq 0$, and $yz\neq 0$, then $s t\leq yz$,
	\item[(R3)] \label{R3} $\forall e,s\in B_j \cup B_{i+1}$, if $e\leq \dom(s)$, then $\exists! t\in B_j \cup B_{i+1}$ with $t \leq s$ and $\dom(t)=e$, and
	\item[(R4)] \label{R4} $\forall e,s\in B_j \cup B_{i+1}$, if $e\leq \ran(s)$, then $\exists! t\in B_j \cup B_{i+1}$ with $t\leq s$ and $\ran(t)=e$.
\end{enumerate}
This function can be computed in a straightforward depth-first (or even brute-force) manner. For the purposes of Algorithm \ref{AlgMain}, up to isomorphism the number of possible combinations of inputs to this function is very small relative to the number of times this function will be called, and the performance of the algorithm is improved considerably by computing the output of this function just once per combination of inputs, caching the results and looking them up as necessary.
	\item $\texttt{CartesianProduct}(L)$ takes a list $L=[S_1 ,\ldots, S_j]$ of sets or lists, and returns the set $S_1 \times \cdots \times S_j$.
	\item $\texttt{TransitiveClosure}(\leq_i,R)$ accepts a partial order $\leq_i$ and a tuple $R$ of partial orders, and returns the partial order given by the transitive closure of $ \leq_i \cup \left(\cup_{r\in R} r \right).$
	\item Fix any idempotent $e\in B_{i+1}$ and, for $i\in \{1,\ldots,i\}$, let $r(h)=|\{f\in B_h:f\leq_{i+1} e\}|$. $\texttt{PassesCardinalityTest}(\widehat B_{i+1},\leq_{i+1},i)$ returns True if $|\{t\in B_h:t\leq_{i+1} s\}| = r(h)$ for all $s\in B_{i+1}$ and all $h\in \{1 ,\ldots, i\}$, and returns False otherwise.
\end{itemize}

The output of $\texttt{GPosets}((B,\leq_{E(B)}))$ is the collection of partial orders $\leq_k$ in the leaves of this search tree. The proof of correctness of this implementation of $\texttt{GPosets}$ is somewhat lengthy, and is included in Section \ref{SecGPosetsCorrect}.


\section{\texttt{Invariants} and \texttt{IsNew}}

In this section we discuss our implementation of the \texttt{Invariants} and \texttt{IsNew} functions for Algorithms \ref{AlgMain} and \ref{AlgBasic}. Our implementation of \texttt{IsNew} makes use of our implementation of \texttt{Invariants}, which we discuss first.

\label{SecIsom}


\subsection{\texttt{Invariants}}

\label{SecInvariants}

For the purposes of our algorithms, a {\em tuple of invariants} for an inverse semigroup S is a tuple $I(S)$ having the property that for finite inverse semigroups $S,T$, if $E(S)=E(T)$ and $S\cong T$, then $I(S)=I(T)$. In Algorithms \ref{AlgMain} and \ref{AlgBasic} we use invariants to sort generated inverse semigroups for isomorphism testing---when a new inverse semigroup $S$ is generated, it must be tested for isomorphism only against previously-generated inverse semigroups $T$ for which $I(S)=I(T)$. In this section we give the invariants we used in our implementation of Algorithm \ref{AlgMain}. 

\begin{rmk}The invariants specified in this section are remarkably efficient at separating inverse semigroups in Algorithm \ref{AlgMain}. It is not uncommon for Algorithm \ref{AlgMain} to be able to accept a newly generated inverse semigroup $S$ with no isomorphism testing whatsoever. We report on the frequency of this occurrence for inverse semigroups of order $\leq 15$ in Section \ref{SecResults}. However, for Algorithm \ref{AlgBasic} (in which $E$, $P$, and $f$ are specified and are therefore identical for every generated inverse semigroup), the invariants given in this section are identical for every generated inverse semigroup. We have included the use of the function \texttt{Invariants} in Algorithm \ref{AlgBasic} as it is conceptually correct and would be useful in an implementation of Algorithm \ref{AlgBasic} if a more refined tuple of invariants could be given. 
\end{rmk}

%
It is straightforward to verify the following theorem. 

\begin{thm}
\label{ThmPosetIsom2}
Suppose $X$ and $Y$ are isomorphic finite posets, with $\phi:X\rightarrow Y$ an isomorphism. If the levels of $X$ are $X_1 ,\ldots, X_{m}$ and the levels of $Y$ are $Y_1 ,\ldots, Y_{m'}$, then $m=m'$ and $\phi$ restricts to a bijection between $X_i$ and $Y_i$ for all $i$.
\end{thm}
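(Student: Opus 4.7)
The plan is to proceed by induction on the level index $i$, using the fact that any poset isomorphism between finite posets must send maximal elements to maximal elements.

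First I would establish the base case. Since $\phi: X \to Y$ is an order isomorphism, $x \in X$ is maximal in $X$ if and only if $\phi(x)$ is maximal in $Y$: if $\phi(x) < y'$ for some $y' \in Y$, then writing $y' = \phi(x')$, the order isomorphism gives $x < x'$, contradicting maximality of $x$, and conversely. Thus $\phi$ restricts to a bijection $X_1 \to Y_1$.

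For the inductive step, suppose we have shown that $\phi$ restricts to bijections $X_j \to Y_j$ for all $j \le i$. Since $\phi$ is a bijection, this means $\phi$ restricts to a bijection between $X \setminus (X_1 \cup \cdots \cup X_i)$ and $Y \setminus (Y_1 \cup \cdots \cup Y_i)$, which is a poset isomorphism with respect to the induced orders. But by Definition \ref{defLevel}, these two sub-posets are exactly the posets $X_{i+1}$ and $Y_{i+1}$ (the posets obtained after stripping off the first $i$ down-levels), and by definition $X_{i+1}$ (respectively $Y_{i+1}$) is the set of maximal elements of this sub-poset. Applying the base-case argument to this restricted isomorphism, $\phi$ restricts to a bijection $X_{i+1} \to Y_{i+1}$.

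Finally, since the down-levels of a finite poset $Y$ partition $Y$ and the process of stripping levels terminates precisely when all elements are exhausted, the induction continues as long as there are elements left in either side. Because $\phi$ is a bijection between $X$ and $Y$, the two stripping processes must terminate at the same step, giving $m = m'$. The main (and essentially only) obstacle is just keeping careful track of what it means for $\phi$ to restrict to an isomorphism of the successive sub-posets; once that is made precise, the argument is a direct induction with no computational content.
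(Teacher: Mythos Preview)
The paper does not actually give a proof of this theorem; it simply prefaces the statement with ``It is straightforward to verify the following theorem.'' Your inductive argument is correct and is exactly the kind of straightforward verification the paper has in mind.

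One small notational point: in the inductive step you write that the complements $X \setminus (X_1 \cup \cdots \cup X_i)$ and $Y \setminus (Y_1 \cup \cdots \cup Y_i)$ ``are exactly the posets $X_{i+1}$ and $Y_{i+1}$,'' and then in the next clause use $X_{i+1}$ and $Y_{i+1}$ again to mean the $(i+1)$th levels. This overloads the symbols, since in the theorem statement $X_i$ and $Y_i$ denote the levels themselves, while in Definition~\ref{defLevel} the intermediate sub-posets carry subscript notation and the levels are called $L_i$. The mathematics is fine, but to avoid confusing a reader you should introduce separate notation (say $X^{(i+1)}$ or similar) for the stripped sub-posets, or simply refer to them as $X \setminus (X_1 \cup \cdots \cup X_i)$ throughout.
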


Let $S$ and $T$ be finite inverse semigroups. Recall that by part \eqref{ThmPosetIsom1} of Theorem \ref{ThmIsomTest} we have that if $S\cong T$, then as posets $(S,\leq)\cong (T,\leq)$.
Let $S_1 ,\ldots, S_{m}$ denote the levels of $(S,\leq)$ and $T_1 ,\ldots, T_{m'}$ denote the levels of $(T,\leq)$. 
By Theorem \ref{ThmPosetIsom2}, then, if $S\cong T$ then $\Lev(S) = \Lev(T)$.


\begin{defn} Let $(X,\leq)$ be a finite poset. The {\em up-levels} of $X$ are defined by replacing {\em maximal} with {\em minimal} in Definition \ref{defLevel}. 
\end{defn}

\begin{defn}Let $E$ be a finite meet-semilattice and let $L^U$ and $L^D$ denote the up-levels and the down-levels of $E$, respectively. The {\em up-down} levels of $E$ is the meet $L^U\meet L^D$ in the lattice of set partitions of $E$. That is, the up-down levels of $E$ is given by the collection of nonempty pairwise intersections between the elements of $L^U$ and $L^D$.
\end{defn}

Let $S$ be a finite inverse semigroup with $E=E(S)$. Let $D_{S,E}$ be the restriction of Green's $\D$-relation to $E(S)$, so $D_{S,E}$ is a $\D$-partition of $E$. For $e\in E$, let $D_{S,E}(e)$ denote the element of $D_{S,E}$ containing $e$, and let $G_S(e)$ be the isomorphism class of the maximal subgroup of $S$ at $e$. For each element $L$ of the up-down layers of $E$, let $X_S(L)$ denote the multiset of ordered pairs $X_S(L)=\{(|D_{S,E}(e)|,G_S(e)) : e\in L\}$. 

\begin{thm}
\label{ThmTestAgainst}Let $S$ and $T$ be finite inverse semigroups with $E(S)=E(T)=E$. Let $U$ denote the up-down levels of $E$. If $S\cong T$, then $X_S(L)$=$X_T(L)$ for all $L\in U$.
\end{thm}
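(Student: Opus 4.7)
The plan is to exploit any isomorphism $\phi: S \to T$ and track how its restriction to $E$ interacts with the up-down-level decomposition. By part \eqref{Isom2} of Theorem \ref{ThmIsomTest}, $\phi|_E$ is a poset automorphism of $(E,\leq)$. Applying Theorem \ref{ThmPosetIsom2} to $(E,\leq)$ shows that $\phi|_E$ setwise preserves each down-level of $E$; applying the same theorem to the opposite poset $(E,\geq)$ shows that it setwise preserves each up-level as well. Since the up-down levels are defined as the nonempty pairwise intersections of up-levels with down-levels, $\phi|_E$ restricts to a bijection from each $L \in U$ to itself.

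Fix $L \in U$ and consider the bijection $\phi|_L : L \to L$. To conclude $X_S(L) = X_T(L)$ as multisets, it suffices to verify that for every $e \in L$ the pair associated with $e$ in $S$ equals the pair associated with $\phi(e)$ in $T$, i.e.\
\[
\bigl(|D_{S,E}(e)|,\,G_S(e)\bigr) \;=\; \bigl(|D_{T,E}(\phi(e))|,\,G_T(\phi(e))\bigr).
\]
For the first coordinate, part \eqref{Isom3} of Theorem \ref{ThmIsomTest} says $\phi$ sends the $\D$-class of $e$ in $S$ bijectively to the $\D$-class of $\phi(e)$ in $T$, and part \eqref{Isom5} says the number of idempotents in a $\D$-class is preserved. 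Since $D_{S,E}(e)$ is exactly the idempotent set of the $\D$-class of $e$, this gives $|D_{S,E}(e)| = |D_{T,E}(\phi(e))|$. For the second coordinate, part \eqref{Isom1} of Theorem \ref{ThmIsomTest} says $\phi$ restricts to a group isomorphism $G_e \to G_{\phi(e)}$, so the isomorphism classes agree.

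Combining these, $\phi|_L$ is a bijection between $L$ and itself that pairs each element with another carrying the same $(|\D\text{-class idempotents}|, \text{maximal subgroup type})$ data, which is exactly the assertion that $X_S(L) = X_T(L)$ as multisets. Since $L \in U$ was arbitrary, the conclusion follows. There is no real obstacle here; the only point needing a moment of thought is that $\phi|_E$ preserves \emph{both} up-levels and down-levels, which is why working with the up-down-level refinement (rather than just down-levels) still yields an invariant.
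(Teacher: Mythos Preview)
Your proof is correct and follows essentially the same approach as the paper's own proof: restrict the isomorphism to $E$, observe that a poset automorphism preserves both up-levels and down-levels (hence their intersections), and then use Theorem \ref{ThmIsomTest} to match the $\D$-class idempotent counts and maximal subgroup types. Your only difference is cosmetic---you are slightly more explicit in citing the opposite-poset trick for up-levels and in pinning down which parts of Theorem \ref{ThmIsomTest} are being used.
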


\begin{proof}
Suppose $\phi:S\rightarrow T$ is an isomorphism. Then $\phi$ restricted to $E(S)=E$ is an automorphism of posets $\phi|_{E} : E \rightarrow E(T)=E$. Any automorphism of a finite poset must preserve the up-levels and the down-levels of that poset, and hence must preserve the up-down levels of that poset as well. Thus, $\phi$ preserves the up-down levels of $E$. Note that $|D_{S,E}(e)|$ is simply the number of idempotents in the $\D$-class of $e$. Since $\phi$ also maps $\D$-classes to $\D$-classes and restricts to isomorphisms of maximal subgroups, if $L\in U$ and $e\in L$, we have $\phi(e)\in L$, $G_S(e) = G_S({\phi(e)})$, and $|D_{S,E}(e)| = |D_T(\phi(e))|$. Thus, as multisets we have $X_S(L) = X_T(L)$. 
\end{proof}

For the purposes of Algorithm \ref{AlgMain}, for an inverse semigroup $S$ generated by the algorithm we let $\texttt{Invariants}(S) = (\Lev(S),X_S)$, where $X_S$ is the function $L\mapsto X_S(L)$. It follows that if $S$ and $T$ are inverse semigroups generated by Algorithm \ref{AlgMain} and $S\cong T$ then $\tt{Invariants}(S) = \tt{Invariants}(T)$. 


\subsection{\texttt{IsNew}} 
\label{SecIsNew}
In this section all inverse semigroups are assumed to have been created by the function $\texttt{ESN}$ in Algorithm \ref{AlgMain} or \ref{AlgBasic}. In particular, non-idempotent elements of inverse semigroups in this section are of the form $\overline{g_{a,b}}$ for some group element $g$ and some idempotents $a,b\in E(S)$. To simplify the notation in this section we write the idempotents of $S$ in the same way, so we write $\overline{()_{e,e}}$ for $e\in E(S)$. Further, let $\G_n$ be the set constructed by $\texttt{Groups}(n)$ on line \ref{MakeGpsStep} of Algorithm \ref{AlgMain}. For $e\in E$ we have $G_e=\{\overline{g_{e,e}}:g\in G\}$ for some $G\in \G_n$, so there is an obvious isomorphism between $G_e$ and $G$.

The function $\texttt{IsNew}(S,I,\textup{isgs})$ accepts an inverse semigroup $S$, $I=\texttt{Invariants}(S)$, and a dictionary \textup{isgs} such that $\textup{isgs}[I]$ is list of inverse semigroups $T$ for which $\texttt{Invariants}(S)=\texttt{Invariants}(T)=I$, $E(S)=E(T)=E$, and the shape of Green's $\D$-relations on $S$ and $T$, restricted to $E$, are equal. The function $\texttt{IsNew}(S,I,\textup{isgs})$ returns False if $S\cong T$ for some $T\in \textup{isgs}[I]$ and returns True otherwise. We implement this by iterating over the inverse semigroups $T\in \textup{isgs}[I]$ and for each such $T$, determining whether or not $S$ and $T$ are isomorphic.

Let \texttt{Invariants} be as implemented in the Section \ref{SecInvariants} and let $S$ and $T$ be finite inverse semigroups with $\texttt{Invariants}(S)=\texttt{Invariants}(T)=I$ and $E(S)=E(T)=E$. Since $\Lev(S)=\Lev(T)$ we have $|S|=|T|$, and it is further straightforward to prove that $S$ and $T$ have the same number of lonely idempotents. Let $\texttt{IsIsoc}(S,T)$ be True if $S \cong T$ and False otherwise. We now describe our implementation of $\texttt{IsIsoc}$. Our strategy for computing $\texttt{IsIsoc}(S,T)$ is to find a homomorphism from $S$ to $T$ among the bijections from $S$ to $T$ or certify that among these bijections no homomorphism exists. Fortunately, frequently it is only necessary to check a small number of these bijections to find an isomorphism or certify non-isomorphism.

Let $D_{S,E}$ be the partition of $E=E(S)$ obtained by restricting Green's $\D$-relation on $S$ to $E$. We specify the following functions.

\begin{itemize}
	\item $\tt{DRestriction}(S)$ takes $S$ and returns the partition $D_{S,E}$. The data structure for $D_{S,E}$ must be implemented in such a way that the iteration order of $D_{S,E}$ is fixed.
	\item $\tt{EColoring}(S,D_{S,E})$ returns a copy of $E=E(S)$, with nodes colored in the following manner. Let $e\in E(S)$ and fix an ordering $Sl_1 ,\ldots, Sl_j$ of the lonely idempotents of $S$.
	\begin{itemize}
		\item If $\overline{()_{e,e}} \in S$ is not a lonely idempotent, $e$ is colored by $(G,|D_{S,E}(e)|)$, where $G\in \G_n$ is such that $G_e \cong G$.
		\item If $\overline{()_{e,e}} \in S$ is a lonely idempotent, say $Sl_i$, then $e$ is colored by $i$.
	\end{itemize}
	\item $\tt{ColoredIsoms}(E_S,E_T)$ accepts two outputs $E_S$ and $E_T$ of $\texttt{EColoring}$ and returns the set of color-preserving (poset) isomorphisms from $E_S$ to $E_T$. This function can be computed with standard graph-theoretic software such as $\tt{nauty}$ \cite{Nauty2}.
	\item $\texttt{MaxSubgp}(X)$ takes an element $X\in D_{S,E}$ and returns the element $G$ of $\G_n$ for which $G_e\cong G$ for any $e\in X$.
	\item $\texttt{Bijections}(A,B)$ takes two equal-sized sets or lists $A$ and $B$, and returns the set of bijections from $A$ to $B$.
	\item $\texttt{IsISGHomomorphism}(d)$ takes a dictionary $d$ whose keys are elements of an inverse semigroup $S$ and whose values are elements of an inverse semigroup $T$ (that is, $d$ is a map from $S$ to $T$), and returns True if $d$ is a homomorphism and False otherwise.
\end{itemize}

\begin{alg} Implementation of $\texttt{IsIsoc}(S,T)$
\label{AlgIsIsoc}
\lstset{ %
basicstyle=\footnotesize,       
numbers=left,                   
numberstyle=\footnotesize,      
stepnumber=1,                   
numbersep=5pt,                  
backgroundcolor=\color{white},  
showspaces=false,               
showstringspaces=false,         
showtabs=false,                 
frame=single,           
tabsize=2,          
captionpos=b,           
breaklines=true,        
breakatwhitespace=false,    
escapeinside={\*}{*}         
}
\begin{lstlisting}
Input:*$S, T$*

*$D_{S,E} = \texttt{DRestriction}(S)$* *\label{LineSetup1}*
*$D_{T,E} = \texttt{DRestriction}(T)$*
*$E_S=\tt{EColoring}(S,D_{S,E})$*
*$E_T=\tt{EColoring}(T,D_{T,E})$* *\label{LineSetup2}*
for *$p$* in *$\tt{ColoredIsoms}(E_S,E_T)$*: *\label{LineMainLoop1}*
	if *$p(D_{S,E}) == D_{T,E}$*: 
		ToCp = *$[\textup{ }]$* *\label{LineExtension1}*
		for *$X$* in *$D_{S,E}$*:
			*$G = \texttt{MaxSubgp}(X)$*
			for *$j$* in *$X$*:
				for *$k$* in *$X$*:
					if *$j$* == *$k$*:
						ToCp.append(*$G$*.automorphisms*$()$*)
					else:
						ToCp.append(*$\texttt{Bijections}(G,G)$*)
		for T in *$\texttt{CartesianProduct}($*ToCp*$)$*: *\label{InnerLoop}*
			d = *$\tt{dict}([\up{ }])$*
			i = 0
			for *$X$* in *$D_{S,E}$*:
				for *$j$* in *$X$*:
					for *$k$* in *$X$*:
						for *$g$* in T[i]:
							d[*$\overline{g_{j,k}}$*] = *$\overline{(T[i](g))_{p(j),p(k)}}$*
						i = i+1
			if *\texttt{IsISGHomomorphism}*(d):
				return True *\label{LineExtension2}*
return False						
\end{lstlisting}
\end{alg}

\begin{prop}[Correctness of Algorithm \ref{AlgIsIsoc}] For (finite) inverse semigroups $S$ and $T$ generated by the $\texttt{ESN}$ function in Algorithm \ref{AlgMain} or \ref{AlgBasic}, if $E=E(S)=E(T$) and $I(S)=\texttt{Invariants}(S) = \texttt{Invariants}(T)=I(T)$, then $\texttt{IsIsoc}(S,T)$ returns True if $S\cong T$ and returns False otherwise.
\end{prop}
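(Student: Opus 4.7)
The plan is to prove both directions of the equivalence: soundness (if True is returned, then $S\cong T$) and completeness (if $S\cong T$, then True is returned). I will begin with soundness since it is essentially read off from the construction.

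Suppose the algorithm reaches the \textbf{return True} on line 26. By that point, a dictionary $d$ has been fully populated: for each $\D$-class $X \in D_{S,E}$ and each ordered pair $(j,k) \in X \times X$, every element $\overline{g_{j,k}} \in S$ with $g \in G = \texttt{MaxSubgp}(X)$ has been assigned the value $\overline{(\mathrm{T}[i](g))_{p(j),p(k)}}$. Because $p$ is a bijection $E \to E$ sending $D_{S,E}$ to $D_{T,E}$ (as the line-7 test certifies), and each coordinate of $\mathrm{T}$ is a bijection on the relevant copy of $G$, the map $d$ is a bijection $S \to T$. The successful call to $\texttt{IsISGHomomorphism}(d)$ certifies that $d$ is a semigroup homomorphism, so $d$ is a semigroup isomorphism and $S \cong T$.

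For completeness, suppose $\phi: S \to T$ is any isomorphism. By Theorem \ref{ThmLonely}, I may modify $\phi$ on lonely idempotents to obtain an isomorphism $\gamma$ that sends the $i$-th lonely idempotent of $S$ to the $i$-th lonely idempotent of $T$ in the orderings fixed by $\texttt{EColoring}$. Then $p := \gamma|_E$ is a poset automorphism of $E$ (Theorem \ref{ThmIsomTest}(iii)) that preserves lonely-idempotent colors by construction and preserves the colors $(G,|D_{S,E}(e)|)$ at non-lonely idempotents, because $\gamma$ maps maximal subgroups isomorphically (Theorem \ref{ThmIsomTest}(ii)) and carries $\D$-classes of $S$ bijectively onto $\D$-classes of $T$ (Theorem \ref{ThmIsomTest}(v)). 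Hence $p$ appears in the output of $\texttt{ColoredIsoms}(E_S,E_T)$, and the test $p(D_{S,E}) == D_{T,E}$ succeeds. I would then show that $\gamma$ itself is realized as one of the dictionaries $d$ enumerated in the inner loop for this $p$. By Theorem \ref{ThmIsomTest}(vii), for any $\overline{g_{j,k}} \in S$ with $j,k$ in the same $\D$-class $X$, the image $\gamma(\overline{g_{j,k}})$ has domain $p(j)$ and range $p(k)$; since the maximal subgroup at each idempotent of $X$ and of $p(X)$ is identified with the same group $G=\texttt{MaxSubgp}(X)=\texttt{MaxSubgp}(p(X))\in\G_n$ by the $\texttt{ESN}$ construction, we obtain $\gamma(\overline{g_{j,k}}) = \overline{h_{p(j),p(k)}}$ for a unique $h \in G$. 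Letting $g$ vary with $j,k$ fixed defines a bijection $\sigma_{X,j,k}: G \to G$, which by Theorem \ref{ThmIsomTest}(ii) is a group automorphism of $G$ whenever $j = k$. Therefore the tuple $(\sigma_{X,j,k})$ lies in $\texttt{CartesianProduct}(\textup{ToCp})$, the dictionary $d$ built from this tuple coincides with $\gamma$ as a set map, and $\texttt{IsISGHomomorphism}(d)$ returns True.

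The main obstacle is the lonely-idempotent coloring: it rigidly forces $p$ to send the $i$-th lonely idempotent of $S$ to the $i$-th of $T$, whereas a generic isomorphism need not respect this fixed correspondence. Theorem \ref{ThmLonely} is exactly the tool that resolves this difficulty, by guaranteeing that $\phi$ can be replaced with an isomorphism $\gamma$ that does conform to the chosen matching without disturbing any multiplicative relations; once this reduction is in hand, the rest of the completeness argument is a routine unpacking of the parts of Theorem \ref{ThmIsomTest} against the structure of the $\texttt{ESN}$-output semigroups.
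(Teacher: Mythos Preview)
Your proposal is correct and follows essentially the same approach as the paper's proof: you split into soundness and completeness, invoke Theorem~\ref{ThmLonely} to normalize an arbitrary isomorphism so that it respects the fixed matching of lonely idempotents, and then use the various parts of Theorem~\ref{ThmIsomTest} to verify that the resulting restriction to $E$ is color-preserving and that the full isomorphism is recovered by some tuple in the Cartesian product. One small slip: for $\overline{g_{j,k}}$ the range is $j$ and the domain is $k$ (the matrix position is $(\ran,\dom)$), so your sentence should read ``range $p(j)$ and domain $p(k)$''; this does not affect the argument.
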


\begin{proof}Since $I(S)=I(T)$, $S$ and $T$ have the same number of lonely idempotents. Suppose the lonely idempotents of $S$ are $Sl_1 ,\ldots, Sl_j$ and the lonely idempotents of $T$ are $Tl_1 ,\ldots, Tl_j$. Lines \ref{LineSetup1}--\ref{LineSetup2} set up the loop beginning at line \ref{LineMainLoop1} to iterate over the automorphisms $p:E\rightarrow E$ such that 
\begin{itemize}
	\item for every $e\in E$, $G_e \cong G_{p(e)}$,
	\item for every $e\in E$, $|D_{S,E}(e)| = |D_{T,E}(p(e))|$, and
	\item $p(Sl_i) = Tl_i$ for all $i\in \{1 ,\ldots, j\}$.
\end{itemize}

By Theorem \ref{ThmLonely} and parts \eqref{Isom0}, \eqref{Isom1}, \eqref{Isom2} and \eqref{Isom5} of Theorem \ref{ThmIsomTest}, $S \cong T$ if and only if there exists an isomorphism $d:S\rightarrow T$ extending some such $p$. Furthermore, by parts \eqref{Isom0} and \eqref{Isom3} of Theorem \ref{ThmIsomTest}, we only need to consider extensions of $p$ for which we have, as sets, $p(D_{S,E}) = D_{T,E}$. The loop on lines \ref{InnerLoop}--\ref{LineExtension2} searches for an isomorphism $d$ extending such a $p$. In particular, this loop checks every extension of such a $p$ to a bijection $d:S\rightarrow T$ such that 
\begin{itemize}
	\item for each $e\in E$, $d|_{G_e\subseteq S}:G_e \rightarrow G_{d(e)}\subseteq T$ is an isomorphism of groups,
	\item for each $s\in S$, $d(\ran(s)) = p(\ran(s)) = \ran(d(s))$, and 
	\item for each $s\in S$, $d(\dom(s)) = p(\dom(s)) = \dom(d(s))$.
\end{itemize}
By parts \eqref{Isom1} and \eqref{Isom4} of Theorem \ref{ThmIsomTest}, $S\cong T$ if and only if for some $p$, some such $d$ is a homomorphism.
\end{proof}

\section{The inverse semigroups of order $\leq 15$}

\label{SecResults}


If $E$ is a meet-semilattice of order $n$, then up to isomorphism the only inverse semigroup $S$ of order $n$ such that $E(S)=E$ is $S=E$ itself. Therefore to count the number of inverse semigroups of order $n$ it suffices to iterate over $m\in \{1 ,\ldots, n-1\}$ on line \ref{OneThruNStep} of Algorithm \ref{AlgMain}, count the number of inverse semigroups output by the algorithm, and add the result to the number of meet-semilattices of order $n$.

We have implemented Algorithm \ref{AlgMain} and have used our implementation to count the inverse semigroups of order $1$ through $15$. Along the way we also counted the number of commutative inverse semigroups, inverse monoids, and commutative inverse monoids. These counts were given in Table \ref{TableISGs} in Section \ref{SecIntro}. 

Tables \ref{ISGOrder2}--\ref{ISGOrder15} contain more detailed information. In these tables ``ISGs" stands for inverse semigroups, ``IMs" stands for inverse monoids, and ``Comm." stands for commutative. We report in these tables the number of inverse semigroups, commutative inverse semigroups, inverse monoids, and commutative inverse monoids $S$ of order $2$ through $15$, broken down by number $|E(S)|$ of idempotents and the shape of the set partition $D_{S,E}$ of $E(S)$ given by restricting Green's $\D$-relation on $S$ to $E(S)$. In particular, given $n$, a number $m$ of idempotents, and a partition $\lambda$ of $m$, an entry of the form $X//Y$ in the ISGs//Semilattices column of Table $n$ indicates that there are $Y$ meet-semilattices $E$ of order $m$ for which there exists an inverse semigroup $S$ of order $n$ such that $E(S)=E$ and the restriction of $\D$ on $S$ to $E$ has shape $\lambda$, and that there are $X$ such inverse semigroups. The pairs of numbers in the other columns have analogous interpretations. Cells are left blank if their entries are $0//0$.

If $E$ is a meet-semilattice of order $m\leq n$, then there is always at least one inverse semigroup $S$ of order $n$ such that $E(S)=E$ and such that the shape of the restriction of $\D$ on $S$ to $E$ is the all-ones partition, so the value $Y$ in the $X//Y$ pair in the all-ones partition portion of the $m$th row and the ISGs//semilattices (resp.\ IMs//lattices) column is just the number of meet-semilattices (resp.\ lattices) of order $m$. 

To avoid writing repeated ones in our partitions, we use the notation $1_j$ to indicate $j$ repeated ones. So, for instance, we write $(3,3,2,1,1,1,1) = (3,3,2,1_4)$ and $(1,1,1,1,1)=(1_5)$.



\begin{table}[ht]
\btablesize
\caption{The inverse semigroups of order 2}
\begin{center}
\begin{tabular}{|c|c|c|c|c|c|}
\hline
\multirow{2}{*}{Idempotents} & Shape & {ISGs //} & {Comm. ISGs // } & {IMs // } & {Comm. IMs //}  \\
 & of $D_{S,E}$ &semilattices&semilattices&lattices&lattices\\
\hline
\multirow{1}{*}{1} & $(1)$ & 1//1 & 1//1 & 1//1 & 1//1 \\
\hline
\multirow{1}{*}{2} & $(1_{2})$ & 1//1 & 1//1 & 1//1 & 1//1\\
\hline
\multicolumn{2}{|r|}{Semigroup totals} & 2 & 2 & 2 & 2 \\
\cline{1-6}
\end{tabular}
\label{ISGOrder2}
\end{center}
\etablesize
\end{table}

\begin{table}[ht]
\btablesize
\caption{The inverse semigroups of order 3}
\begin{center}
\begin{tabular}{|c|c|c|c|c|c|}
\hline
\multirow{2}{*}{Idempotents} & Shape & {ISGs //} & {Comm. ISGs // } & {IMs // } & {Comm. IMs //}  \\
 & of $D_{S,E}$&semilattices&semilattices&lattices&lattices\\
\hline
\multirow{1}{*}{1} & $(1)$ & 1//1 & 1//1 & 1//1 & 1//1 \\
\hline
\multirow{1}{*}{2} & $(1_{2})$ & 2//1 & 2//1 & 2//1 & 2//1 \\
\hline
\multirow{1}{*}{3} & $(1_{3})$ & 2//2 & 2//2 & 1//1 & 1//1\\
\hline
\multicolumn{2}{|r|}{Semigroup totals} & 5 & 5 & 4 & 4 \\
\cline{1-6}
\end{tabular}
\label{ISGOrder3}
\end{center}
\etablesize
\end{table}

\begin{table}[ht]
\btablesize
\caption{The inverse semigroups of order 4}
\begin{center}
\begin{tabular}{|c|c|c|c|c|c|}
\hline
\multirow{2}{*}{Idempotents} & Shape & {ISGs //} & {Comm. ISGs // } & {IMs // } & {Comm. IMs //}  \\
 & of $D_{S,E}$&semilattices&semilattices&lattices&lattices\\
\hline
\multirow{1}{*}{1} & $(1)$ & 2//1 & 2//1 & 2//1 & 2//1 \\
\hline
\multirow{1}{*}{2} & $(1_{2})$ & 4//1 & 4//1 & 4//1 & 4//1 \\
\hline
\multirow{1}{*}{3} & $(1_{3})$ & 5//2 & 5//2 & 3//1 & 3//1 \\
\hline
\multirow{1}{*}{4} & $(1_{4})$ & 5//5 & 5//5 & 2//2 & 2//2\\
\hline
\multicolumn{2}{|r|}{Semigroup totals} & 16 & 16 & 11 & 11 \\
\cline{1-6}
\end{tabular}
\label{ISGOrder4}
\end{center}
\etablesize
\end{table}

\begin{table}[ht]
\btablesize
\caption{The inverse semigroups of order 5}
\begin{center}
\begin{tabular}{|c|c|c|c|c|c|}
\hline
\multirow{2}{*}{Idempotents} & Shape & {ISGs //} & {Comm. ISGs // } & {IMs // } & {Comm. IMs //}  \\
 & of $D_{S,E}$&semilattices&semilattices&lattices&lattices\\
\hline
\multirow{1}{*}{1} & $(1)$ & 1//1 & 1//1 & 1//1 & 1//1 \\
\hline
\multirow{1}{*}{2} & $(1_{2})$ & 6//1 & 6//1 & 6//1 & 6//1 \\
\hline
\multirow{2}{*}{3} & $(2,1)$ & 1//1 &  &  & \\
 & $(1_{3})$ & 13//2 & 13//2 & 8//1 & 8//1 \\
\hline
\multirow{1}{*}{4} & $(1_{4})$ & 16//5 & 16//5 & 7//2 & 7//2 \\
\hline
\multirow{1}{*}{5} & $(1_{5})$ & 15//15 & 15//15 & 5//5 & 5//5\\
\hline
\multicolumn{2}{|r|}{Semigroup totals} & 52 & 51 & 27 & 27 \\
\cline{1-6}
\end{tabular}
\label{ISGOrder5}
\end{center}
\etablesize
\end{table}

\begin{table}[ht]
\btablesize
\caption{The inverse semigroups of order 6}
\begin{center}
\begin{tabular}{|c|c|c|c|c|c|}
\hline
\multirow{2}{*}{Idempotents} & Shape & {ISGs //} & {Comm. ISGs // } & {IMs // } & {Comm. IMs //}  \\
 & of $D_{S,E}$&semilattices&semilattices&lattices&lattices\\
\hline
\multirow{1}{*}{1} & $(1)$ & 2//1 & 1//1 & 2//1 & 1//1 \\
\hline
\multirow{1}{*}{2} & $(1_{2})$ & 12//1 & 12//1 & 12//1 & 12//1 \\
\hline
\multirow{2}{*}{3} & $(2,1)$ & 2//1 &  &  & \\
 & $(1_{3})$ & 26//2 & 26//2 & 16//1 & 16//1 \\
\hline
\multirow{2}{*}{4} & $(2,1_{2})$ & 4//4 &  & 1//1 &  \\
 & $(1_{4})$ & 49//5 & 49//5 & 22//2 & 22//2 \\
\hline
\multirow{1}{*}{5} & $(1_{5})$ & 60//15 & 60//15 & 21//5 & 21//5 \\
\hline
\multirow{1}{*}{6} & $(1_{6})$ & 53//53 & 53//53 & 15//15 & 15//15\\
\hline
\multicolumn{2}{|r|}{Semigroup totals} & 208 & 201 & 89 & 87 \\
\cline{1-6}
\end{tabular}
\label{ISGOrder6}
\end{center}
\etablesize
\end{table}

\begin{table}[ht]
\btablesize
\caption{The inverse semigroups of order 7}
\begin{center}
\begin{tabular}{|c|c|c|c|c|c|}
\hline
\multirow{2}{*}{Idempotents} & Shape & {ISGs //} & {Comm. ISGs // } & {IMs // } & {Comm. IMs //}  \\
 & of $D_{S,E}$&semilattices&semilattices&lattices&lattices\\
\hline
\multirow{1}{*}{1} & $(1)$ & 1//1 & 1//1 & 1//1 & 1//1 \\
\hline
\multirow{1}{*}{2} & $(1_{2})$ & 10//1 & 8//1 & 10//1 & 8//1 \\
\hline
\multirow{2}{*}{3} & $(2,1)$ & 2//1 &  &  & \\
 & $(1_{3})$ & 51//2 & 51//2 & 33//1 & 33//1 \\
\hline
\multirow{2}{*}{4} & $(2,1_{2})$ & 13//4 &  & 4//1 &  \\
 & $(1_{4})$ & 118//5 & 118//5 & 54//2 & 54//2 \\
\hline
\multirow{2}{*}{5} & $(2,1_{3})$ & 17//14 &  & 4//4 &  \\
 & $(1_{5})$ & 215//15 & 215//15 & 76//5 & 76//5 \\
\hline
\multirow{1}{*}{6} & $(1_{6})$ & 262//53 & 262//53 & 75//15 & 75//15 \\
\hline
\multirow{1}{*}{7} & $(1_{7})$ & 222//222 & 222//222 & 53//53 & 53//53\\
\hline
\multicolumn{2}{|r|}{Semigroup totals} & 911 & 877 & 310 & 300 \\
\cline{1-6}
\end{tabular}
\label{ISGOrder7}
\end{center}
\etablesize
\end{table}

\begin{table}[ht]
\btablesize
\caption{The inverse semigroups of order 8}
\begin{center}
\begin{tabular}{|c|c|c|c|c|c|}
\hline
\multirow{2}{*}{Idempotents} & Shape & {ISGs //} & {Comm. ISGs // } & {IMs // } & {Comm. IMs //}  \\
 & of $D_{S,E}$&semilattices&semilattices&lattices&lattices\\
\hline
\multirow{1}{*}{1} & $(1)$ & 5//1 & 3//1 & 5//1 & 3//1 \\
\hline
\multirow{1}{*}{2} & $(1_{2})$ & 22//1 & 18//1 & 22//1 & 18//1 \\
\hline
\multirow{2}{*}{3} & $(2,1)$ & 5//1 &  &  & \\
 & $(1_{3})$ & 85//2 & 80//2 & 54//1 & 51//1 \\
\hline
\multirow{2}{*}{4} & $(2,1_{2})$ & 26//4 &  & 7//1 &  \\
 & $(1_{4})$ & 269//5 & 269//5 & 124//2 & 124//2 \\
\hline
\multirow{2}{*}{5} & $(2,1_{3})$ & 70//14 &  & 19//4 &  \\
 & $(1_{5})$ & 601//15 & 601//15 & 215//5 & 215//5 \\
\hline
\multirow{2}{*}{6} & $(2,1_{4})$ & 82//52 &  & 17//14 &  \\
 & $(1_{6})$ & 1079//53 & 1079//53 & 311//15 & 311//15 \\
\hline
\multirow{1}{*}{7} & $(1_{7})$ & 1315//222 & 1315//222 & 315//53 & 315//53 \\
\hline
\multirow{1}{*}{8} & $(1_{8})$ & 1078//1078 & 1078//1078 & 222//222 & 222//222\\
\hline
\multicolumn{2}{|r|}{Semigroup totals} & 4637 & 4443 & 1311 & 1259 \\
\cline{1-6}
\end{tabular}
\label{ISGOrder8}
\end{center}
\etablesize
\end{table}

\begin{table}[ht]
\btablesize
\caption{The inverse semigroups of order 9}
\begin{center}
\begin{tabular}{|c|c|c|c|c|c|}
\hline
\multirow{2}{*}{Idempotents} & Shape & {ISGs //} & {Comm. ISGs // } & {IMs // } & {Comm. IMs //}  \\
 & of $D_{S,E}$&semilattices&semilattices&lattices&lattices\\
\hline
\multirow{1}{*}{1} & $(1)$ & 2//1 & 2//1 & 2//1 & 2//1 \\
\hline
\multirow{1}{*}{2} & $(1_{2})$ & 23//1 & 16//1 & 23//1 & 16//1 \\
\hline
\multirow{2}{*}{3} & $(2,1)$ & 3//1 &  &  & \\
 & $(1_{3})$ & 126//2 & 111//2 & 82//1 & 72//1 \\
\hline
\multirow{2}{*}{4} & $(2,1_{2})$ & 47//4 &  & 14//1 &  \\
 & $(1_{4})$ & 520//5 & 504//5 & 245//2 & 238//2 \\
\hline
\multirow{3}{*}{5} & $(2,2,1)$ & 3//3 &  &  & \\
 & $(2,1_{3})$ & 192//14 &  & 53//4 &  \\
 & $(1_{5})$ & 1555//15 & 1555//15 & 562//5 & 562//5 \\
\hline
\multirow{2}{*}{6} & $(2,1_{4})$ & 410//52 &  & 92//14 &  \\
 & $(1_{6})$ & 3460//53 & 3460//53 & 1003//15 & 1003//15 \\
\hline
\multirow{2}{*}{7} & $(2,1_{5})$ & 445//221 &  & 82//52 &  \\
 & $(1_{7})$ & 6137//222 & 6137//222 & 1480//53 & 1480//53 \\
\hline
\multirow{1}{*}{8} & $(1_{8})$ & 7505//1078 & 7505//1078 & 1537//222 & 1537//222 \\
\hline
\multirow{1}{*}{9} & $(1_{9})$ & 5994//5994 & 5994//5994 & 1078//1078 & 1078//1078\\
\hline
\multicolumn{2}{|r|}{Semigroup totals} & 26422 & 25284 & 6253 & 5988 \\
\cline{1-6}
\end{tabular}
\label{ISGOrder9}
\end{center}
\etablesize
\end{table}

\begin{table}[ht]
\btablesize
\caption{The inverse semigroups of order 10}
\begin{center}
\begin{tabular}{|c|c|c|c|c|c|}
\hline
\multirow{2}{*}{Idempotents} & Shape & {ISGs //} & {Comm. ISGs // } & {IMs // } & {Comm. IMs //}  \\
 & of $D_{S,E}$&semilattices&semilattices&lattices&lattices\\
\hline
\multirow{1}{*}{1} & $(1)$ & 2//1 & 1//1 & 2//1 & 1//1 \\
\hline
\multirow{1}{*}{2} & $(1_{2})$ & 48//1 & 30//1 & 48//1 & 30//1 \\
\hline
\multirow{2}{*}{3} & $(2,1)$ & 10//1 &  &  & \\
 & $(1_{3})$ & 235//2 & 193//2 & 151//1 & 125//1 \\
\hline
\multirow{3}{*}{4} & $(3,1)$ & 1//1 &  &  & \\
 & $(2,1_{2})$ & 92//4 &  & 23//1 &  \\
 & $(1_{4})$ & 981//5 & 918//5 & 462//2 & 433//2 \\
\hline
\multirow{3}{*}{5} & $(2,2,1)$ & 7//3 &  &  & \\
 & $(2,1_{3})$ & 424//14 &  & 118//4 &  \\
 & $(1_{5})$ & 3499//15 & 3439//15 & 1273//5 & 1252//5 \\
\hline
\multirow{3}{*}{6} & $(2,2,1_{2})$ & 27//24 &  & 3//3 &  \\
 & $(2,1_{4})$ & 1387//52 &  & 321//14 &  \\
 & $(1_{6})$ & 10016//53 & 10016//53 & 2928//15 & 2928//15 \\
\hline
\multirow{2}{*}{7} & $(2,1_{5})$ & 2629//221 &  & 508//52 &  \\
 & $(1_{7})$ & 22254//222 & 22254//222 & 5389//53 & 5389//53 \\
\hline
\multirow{2}{*}{8} & $(2,1_{6})$ & 2704//1077 &  & 445//221 &  \\
 & $(1_{8})$ & 39164//1078 & 39164//1078 & 8077//222 & 8077//222 \\
\hline
\multirow{1}{*}{9} & $(1_{9})$ & 48061//5994 & 48061//5994 & 8583//1078 & 8583//1078 \\
\hline
\multirow{1}{*}{10} & $(1_{10})$ & 37622//37622 & 37622//37622 & 5994//5994 & 5994//5994\\
\hline
\multicolumn{2}{|r|}{Semigroup totals} & 169163 & 161698 & 34325 & 32812 \\
\cline{1-6}
\end{tabular}
\label{ISGOrder10}
\end{center}
\etablesize
\end{table}

\begin{table}[ht]
\btablesize
\caption{The inverse semigroups of order 11}
\begin{center}
\begin{tabular}{|c|c|c|c|c|c|}
\hline
\multirow{2}{*}{Idempotents} & Shape & {ISGs //} & {Comm. ISGs // } & {IMs // } & {Comm. IMs //}  \\
 & of $D_{S,E}$&semilattices&semilattices&lattices&lattices\\
\hline
\multirow{1}{*}{1} & $(1)$ & 1//1 & 1//1 & 1//1 & 1//1 \\
\hline
\multirow{1}{*}{2} & $(1_{2})$ & 26//1 & 18//1 & 26//1 & 18//1 \\
\hline
\multirow{2}{*}{3} & $(2,1)$ & 4//1 &  &  & \\
 & $(1_{3})$ & 301//2 & 215//2 & 198//1 & 141//1 \\
\hline
\multirow{3}{*}{4} & $(3,1)$ & 2//1 &  &  & \\
 & $(2,1_{2})$ & 113//4 &  & 34//1 &  \\
 & $(1_{4})$ & 1707//5 & 1495//5 & 808//2 & 710//2 \\
\hline
\multirow{4}{*}{5} & $(3,1_{2})$ & 5//5 &  & 1//1 &  \\
 & $(2,2,1)$ & 7//3 &  &  & \\
 & $(2,1_{3})$ & 904//14 &  & 253//4 &  \\
 & $(1_{5})$ & 7407//15 & 7108//15 & 2723//5 & 2616//5 \\
\hline
\multirow{3}{*}{6} & $(2,2,1_{2})$ & 105//24 &  & 14//3 &  \\
 & $(2,1_{4})$ & 3660//52 &  & 866//14 &  \\
 & $(1_{6})$ & 25503//53 & 25241//53 & 7507//15 & 7432//15 \\
\hline
\multirow{3}{*}{7} & $(2,2,1_{3})$ & 216//149 &  & 27//24 &  \\
 & $(2,1_{5})$ & 10518//221 &  & 2085//52 &  \\
 & $(1_{7})$ & 71439//222 & 71439//222 & 17439//53 & 17439//53 \\
\hline
\multirow{2}{*}{8} & $(2,1_{6})$ & 18510//1077 &  & 3134//221 &  \\
 & $(1_{8})$ & 158478//1078 & 158478//1078 & 32845//222 & 32845//222 \\
\hline
\multirow{2}{*}{9} & $(2,1_{7})$ & 18232//5993 &  & 2704//1077 &  \\
 & $(1_{9})$ & 277347//5994 & 277347//5994 & 49905//1078 & 49905//1078 \\
\hline
\multirow{1}{*}{10} & $(1_{10})$ & 341390//37622 & 341390//37622 & 54055//5994 & 54055//5994 \\
\hline
\multirow{1}{*}{11} & $(1_{11})$ & 262776//262776 & 262776//262776 & 37622//37622 & 37622//37622\\
\hline
\multicolumn{2}{|r|}{Semigroup totals} & 1198651 & 1145508 & 212247 & 202784 \\
\cline{1-6}
\end{tabular}
\label{ISGOrder11}
\end{center}
\etablesize
\end{table}

\begin{table}[ht]
\btablesize
\caption{The inverse semigroups of order 12}
\begin{center}
\begin{tabular}{|c|c|c|c|c|c|}
\hline
\multirow{2}{*}{Idempotents} & Shape & {ISGs //} & {Comm. ISGs // } & {IMs // } & {Comm. IMs //}  \\
 & of $D_{S,E}$&semilattices&semilattices&lattices&lattices\\
\hline
\multirow{1}{*}{1} & $(1)$ & 5//1 & 2//1 & 5//1 & 2//1 \\
\hline
\multirow{1}{*}{2} & $(1_{2})$ & 93//1 & 56//1 & 93//1 & 56//1 \\
\hline
\multirow{2}{*}{3} & $(2,1)$ & 26//1 &  &  & \\
 & $(1_{3})$ & 544//2 & 367//2 & 349//1 & 236//1 \\
\hline
\multirow{3}{*}{4} & $(3,1)$ & 3//1 &  &  & \\
 & $(2,1_{2})$ & 227//4 &  & 59//1 &  \\
 & $(1_{4})$ & 3081//5 & 2535//5 & 1473//2 & 1220//2 \\
\hline
\multirow{4}{*}{5} & $(3,1_{2})$ & 19//5 &  & 4//1 &  \\
 & $(2,2,1)$ & 20//3 &  &  & \\
 & $(2,1_{3})$ & 1650//14 &  & 466//4 &  \\
 & $(1_{5})$ & 14725//15 & 13552//15 & 5430//5 & 5010//5 \\
\hline
\multirow{4}{*}{6} & $(3,1_{3})$ & 26//23 &  & 5//5 &  \\
 & $(2,2,1_{2})$ & 209//24 &  & 25//3 &  \\
 & $(2,1_{4})$ & 8865//52 &  & 2118//14 &  \\
 & $(1_{6})$ & 60352//53 & 58761//53 & 17905//15 & 17444//15 \\
\hline
\multirow{3}{*}{7} & $(2,2,1_{3})$ & 1078//149 &  & 151//24 &  \\
 & $(2,1_{5})$ & 32320//221 &  & 6546//52 &  \\
 & $(1_{7})$ & 202397//222 & 201082//222 & 49742//53 & 49427//53 \\
\hline
\multirow{3}{*}{8} & $(2,2,1_{4})$ & 1780//883 &  & 216//149 &  \\
 & $(2,1_{6})$ & 85146//1077 &  & 14739//221 &  \\
 & $(1_{8})$ & 559264//1078 & 559264//1078 & 116857//222 & 116857//222 \\
\hline
\multirow{2}{*}{9} & $(2,1_{7})$ & 142296//5993 &  & 21476//1077 &  \\
 & $(1_{9})$ & 1237965//5994 & 1237965//5994 & 224095//1078 & 224095//1078 \\
\hline
\multirow{2}{*}{10} & $(2,1_{8})$ & 135249//37621 &  & 18232//5993 &  \\
 & $(1_{10})$ & 2157481//37622 & 2157481//37622 & 344406//5994 & 344406//5994 \\
\hline
\multirow{1}{*}{11} & $(1_{11})$ & 2660921//262776 & 2660921//262776 & 379012//37622 & 379012//37622 \\
\hline
\multirow{1}{*}{12} & $(1_{12})$ & 2018305//2018305 & 2018305//2018305 & 262776//262776 & 262776//262776\\
\hline
\multicolumn{2}{|r|}{Semigroup totals} & 9324047 & 8910291 & 1466180 & 1400541 \\
\cline{1-6}
\end{tabular}
\label{ISGOrder12}
\end{center}
\etablesize
\end{table}

\begin{table}[ht]
\btablesize
\caption{The inverse semigroups of order 13}
\begin{center}
\begin{tabular}{|c|c|c|c|c|c|}
\hline
\multirow{2}{*}{Idempotents} & Shape & {ISGs //} & {Comm. ISGs // } & {IMs // } & {Comm. IMs //}  \\
 & of $D_{S,E}$&semilattices&semilattices&lattices&lattices\\
\hline
\multirow{1}{*}{1} & $(1)$ & 1//1 & 1//1 & 1//1 & 1//1 \\
\hline
\multirow{1}{*}{2} & $(1_{2})$ & 38//1 & 24//1 & 38//1 & 24//1 \\
\hline
\multirow{2}{*}{3} & $(2,1)$ & 8//1 &  &  & \\
 & $(1_{3})$ & 634//2 & 412//2 & 419//1 & 272//1 \\
\hline
\multirow{3}{*}{4} & $(3,1)$ & 7//1 &  &  & \\
 & $(2,1_{2})$ & 295//4 &  & 88//1 &  \\
 & $(1_{4})$ & 4717//5 & 3479//5 & 2246//2 & 1660//2 \\
\hline
\multirow{4}{*}{5} & $(3,1_{2})$ & 44//5 &  & 9//1 &  \\
 & $(2,2,1)$ & 14//3 &  &  & \\
 & $(2,1_{3})$ & 2777//14 &  & 786//4 &  \\
 & $(1_{5})$ & 28025//15 & 24490//15 & 10385//5 & 9106//5 \\
\hline
\multirow{4}{*}{6} & $(3,1_{3})$ & 134//23 &  & 27//5 &  \\
 & $(2,2,1_{2})$ & 428//24 &  & 55//3 &  \\
 & $(2,1_{4})$ & 18873//52 &  & 4592//14 &  \\
 & $(1_{6})$ & 132846//53 & 125672//53 & 39675//15 & 37592//15 \\
\hline
\multirow{5}{*}{7} & $(3,1_{4})$ & 153//117 &  & 26//23 &  \\
 & $(2,2,2,1)$ & 13//12 &  &  & \\
 & $(2,2,1_{3})$ & 3063//149 &  & 441//24 &  \\
 & $(2,1_{5})$ & 88364//221 &  & 18171//52 &  \\
 & $(1_{7})$ & 528405//222 & 518948//222 & 130955//53 & 128668//53 \\
\hline
\multirow{3}{*}{8} & $(2,2,1_{4})$ & 10719//883 &  & 1385//149 &  \\
 & $(2,1_{6})$ & 298708//1077 &  & 52723//221 &  \\
 & $(1_{8})$ & 1741789//1078 & 1734284//1078 & 366740//222 & 365203//222 \\
\hline
\multirow{3}{*}{9} & $(2,2,1_{5})$ & 15456//5435 &  & 1780//883 &  \\
 & $(2,1_{7})$ & 737996//5993 &  & 113535//1077 &  \\
 & $(1_{9})$ & 4764281//5994 & 4764281//5994 & 869969//1078 & 869969//1078 \\
\hline
\multirow{2}{*}{10} & $(2,1_{8})$ & 1187056//37621 &  & 161843//5993 &  \\
 & $(1_{10})$ & 10518061//37622 & 10518061//37622 & 1691090//5994 & 1691090//5994 \\
\hline
\multirow{2}{*}{11} & $(2,1_{9})$ & 1093871//262775 &  & 135249//37621 &  \\
 & $(1_{11})$ & 18265468//262776 & 18265468//262776 & 2623757//37622 & 2623757//37622 \\
\hline
\multirow{1}{*}{12} & $(1_{12})$ & 22545079//2018305 & 22545079//2018305 & 2923697//262776 & 2923697//262776 \\
\hline
\multirow{1}{*}{13} & $(1_{13})$ & 16873364//16873364 & 16873364//16873364 & 2018305//2018305 & 2018305//2018305\\
\hline
\multicolumn{2}{|r|}{Semigroup totals} & 78860687 & 75373563 & 11167987 & 10669344 \\
\cline{1-6}
\end{tabular}
\label{ISGOrder13}
\end{center}
\etablesize
\end{table}

\begin{table}[ht]
\btablesize
\caption{The inverse semigroups of order 14}
\begin{center}
\begin{tabular}{|c|c|c|c|c|c|}
\hline
\multirow{2}{*}{Idempotents} & Shape & {ISGs //} & {Comm. ISGs // } & {IMs // } & {Comm. IMs //}  \\
 & of $D_{S,E}$&semilattices&semilattices&lattices&lattices\\
\hline
\multirow{1}{*}{1} & $(1)$ & 2//1 & 1//1 & 2//1 & 1//1 \\
\hline
\multirow{1}{*}{2} & $(1_{2})$ & 95//1 & 40//1 & 95//1 & 40//1 \\
\hline
\multirow{2}{*}{3} & $(2,1)$ & 20//1 &  &  & \\
 & $(1_{3})$ & 1225//2 & 706//2 & 801//1 & 465//1 \\
\hline
\multirow{3}{*}{4} & $(3,1)$ & 3//1 &  &  & \\
 & $(2,1_{2})$ & 629//4 &  & 175//1 &  \\
 & $(1_{4})$ & 8460//5 & 5977//5 & 4071//2 & 2899//2 \\
\hline
\multirow{4}{*}{5} & $(3,1_{2})$ & 91//5 &  & 19//1 &  \\
 & $(2,2,1)$ & 51//3 &  &  & \\
 & $(2,1_{3})$ & 5309//14 &  & 1510//4 &  \\
 & $(1_{5})$ & 51551//15 & 42161//15 & 19261//5 & 15848//5 \\
\hline
\multirow{5}{*}{6} & $(3,2,1)$ & 7//6 &  &  & \\
 & $(3,1_{3})$ & 418//23 &  & 87//5 &  \\
 & $(2,2,1_{2})$ & 816//24 &  & 93//3 &  \\
 & $(2,1_{4})$ & 37344//52 &  & 9176//14 &  \\
 & $(1_{6})$ & 278911//53 & 254127//53 & 83827//15 & 76586//15 \\
\hline
\multirow{5}{*}{7} & $(3,1_{4})$ & 976//117 &  & 170//23 &  \\
 & $(2,2,2,1)$ & 32//12 &  &  & \\
 & $(2,2,1_{3})$ & 7216//149 &  & 1045//24 &  \\
 & $(2,1_{5})$ & 213876//221 &  & 44648//52 &  \\
 & $(1_{7})$ & 1279242//222 & 1230949//222 & 319361//53 & 307637//53 \\
\hline
\multirow{5}{*}{8} & $(3,1_{5})$ & 999//653 &  & 153//117 &  \\
 & $(2,2,2,1_{2})$ & 240//191 &  & 13//12 &  \\
 & $(2,2,1_{4})$ & 38341//883 &  & 5144//149 &  \\
 & $(2,1_{6})$ & 912857//1077 &  & 163837//221 &  \\
 & $(1_{8})$ & 4967113//1078 & 4904704//1078 & 1055099//222 & 1042206//222 \\
\hline
\multirow{3}{*}{9} & $(2,2,1_{5})$ & 108619//5435 &  & 12998//883 &  \\
 & $(2,1_{7})$ & 2908054//5993 &  & 455377//1077 &  \\
 & $(1_{9})$ & 16156724//5994 & 16108663//5994 & 2975421//1078 & 2966838//1078 \\
\hline
\multirow{3}{*}{10} & $(2,2,1_{6})$ & 142385//35893 &  & 15456//5435 &  \\
 & $(2,1_{8})$ & 6838144//37621 &  & 948366//5993 &  \\
 & $(1_{10})$ & 43822653//37622 & 43822653//37622 & 7111831//5994 & 7111831//5994 \\
\hline
\multirow{2}{*}{11} & $(2,1_{9})$ & 10673677//262775 &  & 1329810//37621 &  \\
 & $(1_{11})$ & 96447794//262776 & 96447794//262776 & 13966574//37622 & 13966574//37622 \\
\hline
\multirow{2}{*}{12} & $(2,1_{10})$ & 9569171//2018304 &  & 1093871//262775 &  \\
 & $(1_{12})$ & 166932647//2018305 & 166932647//2018305 & 21834255//262776 & 21834255//262776 \\
\hline
\multirow{1}{*}{13} & $(1_{13})$ & 205966795//16873364 & 205966795//16873364 & 24563384//2018305 & 24563384//2018305 \\
\hline
\multirow{1}{*}{14} & $(1_{14})$ & 152233518//152233518 & 152233518//152233518 & 16873364//16873364 & 16873364//16873364\\
\hline
\multicolumn{2}{|r|}{Semigroup totals} & 719606005 & 687950735 & 92889294 & 88761928 \\
\cline{1-6}
\end{tabular}
\label{ISGOrder14}
\end{center}
\etablesize
\end{table}

\begin{table}[ht]
\btablesize
\caption{The inverse semigroups of order 15}
\begin{center}
\begin{tabular}{|c|c|c|c|c|c|}
\hline
\multirow{2}{*}{Idempotents}& Shape & {ISGs //} & {Comm. ISGs // } & {IMs // } & {Comm. IMs //}  \\
 & of $D_{S,E}$&semilattices&semilattices&lattices&lattices\\
\hline
\multirow{1}{*}{1} & $(1)$ & 1//1 & 1//1 & 1//1 & 1//1 \\
\hline
\multirow{1}{*}{2} & $(1_{2})$ & 59//1 & 34//1 & 59//1 & 34//1 \\
\hline
\multirow{2}{*}{3} & $(2,1)$ & 7//1 &  &  & \\
 & $(1_{3})$ & 1017//2 & 580//2 & 672//1 & 382//1 \\
\hline
\multirow{3}{*}{4} & $(3,1)$ & 12//1 &  &  & \\
 & $(2,1_{2})$ & 445//4 &  & 140//1 &  \\
 & $(1_{4})$ & 11963//5 & 7588//5 & 5734//2 & 3650//2 \\
\hline
\multirow{4}{*}{5} & $(3,1_{2})$ & 164//5 &  & 29//1 &  \\
 & $(2,2,1)$ & 22//3 &  &  & \\
 & $(2,1_{3})$ & 8202//14 &  & 2378//4 &  \\
 & $(1_{5})$ & 89791//15 & 68092//15 & 33434//5 & 25468//5 \\
\hline
\multirow{5}{*}{6} & $(3,2,1)$ & 18//6 &  &  & \\
 & $(3,1_{3})$ & 1020//23 &  & 211//5 &  \\
 & $(2,2,1_{2})$ & 1266//24 &  & 159//3 &  \\
 & $(2,1_{4})$ & 71562//52 &  & 17608//14 &  \\
 & $(1_{6})$ & 557310//53 & 482754//53 & 168321//15 & 146380//15 \\
\hline
\multirow{6}{*}{7} & $(3,2,1_{2})$ & 86//66 &  & 7//6 &  \\
 & $(3,1_{4})$ & 3741//117 &  & 671//23 &  \\
 & $(2,2,2,1)$ & 32//12 &  &  & \\
 & $(2,2,1_{3})$ & 15957//149 &  & 2316//24 &  \\
 & $(2,1_{5})$ & 476897//221 &  & 100989//52 &  \\
 & $(1_{7})$ & 2928371//222 & 2739985//222 & 736701//53 & 690700//53 \\
\hline
\multirow{5}{*}{8} & $(3,1_{5})$ & 7561//653 &  & 1168//117 &  \\
 & $(2,2,2,1_{2})$ & 1020//192 &  & 66//12 &  \\
 & $(2,2,1_{4})$ & 107023//883 &  & 14722//149 &  \\
 & $(2,1_{6})$ & 2467528//1077 &  & 449724//221 &  \\
 & $(1_{8})$ & 13101797//1078 & 12745673//1078 & 2806507//222 & 2732450//222 \\
\hline
\multirow{5}{*}{9} & $(3,1_{6})$ & 7225//4049 &  & 999//653 &  \\
 & $(2,2,2,1_{3})$ & 3381//2062 &  & 240//191 &  \\
 & $(2,2,1_{5})$ & 462898//5435 &  & 57163//883 &  \\
 & $(2,1_{7})$ & 9837747//5993 &  & 1566722//1077 &  \\
 & $(1_{9})$ & 49918237//5994 & 49464586//5994 & 9280078//1078 & 9198343//1078 \\
\hline
\multirow{3}{*}{10} & $(2,2,1_{6})$ & 1142433//35893 &  & 127151//5435 &  \\
 & $(2,1_{8})$ & 29864170//37621 &  & 4210461//5993 &  \\
 & $(1_{10})$ & 160561088//37622 & 160219698//37622 & 26299900//5994 & 26245845//5994 \\
\hline
\multirow{3}{*}{11} & $(2,2,1_{7})$ & 1390467//257001 &  & 142385//35893 &  \\
 & $(2,1_{9})$ & 67509604//262775 &  & 8541243//37621 &  \\
 & $(1_{11})$ & 432247509//262776 & 432247509//262776 & 63212608//37622 & 63212608//37622 \\
\hline
\multirow{2}{*}{12} & $(2,1_{10})$ & 102805707//2018304 &  & 11815609//262775 &  \\
 & $(1_{12})$ & 948037628//2018305 & 948037628//2018305 & 125084221//262776 & 125084221//262776 \\
\hline
\multirow{2}{*}{13} & $(2,1_{11})$ & 89902414//16873363 &  & 9569171//2018304 &  \\
 & $(1_{13})$ & 1635389858//16873364 & 1635389858//16873364 & 196698551//2018305 & 196698551//2018305 \\
\hline
\multirow{1}{*}{14} & $(1_{14})$ & 2014968017//152233518 & 2014968017//152233518 & 222840159//16873364 & 222840159//16873364 \\
\hline
\multirow{1}{*}{15} & $(1_{15})$ & 1471613387//1471613387 & 1471613387//1471613387 & 152233518//152233518 & 152233518//152233518\\
\hline
\multicolumn{2}{|r|}{Semigroup totals} & 7035514642 & 6727985390 & 836021796 & 799112310 \\
\cline{1-6}
\end{tabular}
\label{ISGOrder15}
\end{center}
\etablesize
\end{table}

We parallelized our implementation of Algorithm \ref{AlgMain} at line \ref{MainStepE} by spawning a new thread to carry out the computations for each meet-semilattice $E$. One benefit of our approach is that the parallelized threads do not need to communicate with one another, so the work required by the algorithm is easily distributable across several CPUs and/or computer systems if necessary. 

We ran our implementation on a computational server hosted at Sam Houston State University, which has four AMD Opteron\textsuperscript{TM} 6272 processors (a total of 64 cores), with each core running at 2.1 GHz, and 256 GB of RAM. 
The following running times are given in terms of the computational power of one core of our server. Including time spent testing for commutativity and counting monoids and inverse monoids along the way, our algorithm took a total of 11.1 CPU years to count the inverse semigroups of order $1 ,\ldots, 15$. Approximately 20\% of this time was spent on isomorphism tests. 92\% of this time was spent on $n=15$. 
We estimate that it would take approximately 100 CPU years for our implementation to count the inverse semigroups of order $16$. 

Thanks to our implementation of $\tt{Invariants}$ in Section \ref{SecInvariants}, of the 6201659106 inverse semigroups of order $1 ,\ldots, 15$ generated by our algorithm, 4317895179 of them (69.62\%) were accepted as new inverse semigroups immediately (with no isomorphism testing), and 2824933733 of those (65.42\%) were never involved in an isomorphism test. A total of 5491416345 isomorphism tests were performed, for an overall rate of 0.885 isomorphism tests per generated inverse semigroup. Statistics regarding the effectiveness of our implementation of $\tt{Invariants}$, broken down by $n$, are given in Table \ref{TableEfficacyOfInvariants}. These statistics show that while our implementation of $\tt{Invariants}$ becomes less effective as $n$ grows, it remains highly effective for all $n\leq 15$. 

\begin{table}%
\caption{Effectiveness of \tt{Invariants}}
\begin{tabular}{|c|c|c|c|}
\hline
\multirow{2}{*} n & {\% of generated ISGs} & \% of these never involved & \#isomorphism tests done / \\
 &  accepted immediately &  in isomorphism test & \#generated ISGs \\
 \hline
2 & 100\% & 100\% & 0.0 \\
3 & 100\% & 100\% & 0.0 \\
4 & 100\% & 90.9\% & 0.091 \\
5 & 100\% & 83.8\% & 0.189 \\
6 & 97.4\% & 75.5\% & 0.316 \\
7 & 94.3\% & 72.3\% & 0.409 \\
8 & 90.0\% & 69.9\% & 0.500 \\
9 & 85.9\% & 68.7\% & 0.559 \\
10 & 81.9\% & 67.8\% & 0.618 \\
11 & 78.5\% & 67.2\% & 0.668 \\
12 & 75.6\% & 66.7\% & 0.722 \\
13 & 73.2\% & 66.3\% & 0.776 \\
14 & 71.1\% & 65.8\% & 0.833 \\
15 & 69.4\% & 65.4\% & 0.892 \\
\hline
\end{tabular}
\label{TableEfficacyOfInvariants}
\end{table}



Although it is impossible to certify that our implementation of our algorithm (which consists of thousands of lines of \texttt{Sage} code) is bug-free and ran without error, all of the evidence we have points in this direction. First, our implementation correctly computed the number of inverse semigroups, commutative inverse semigroups, inverse monoids, and commutative inverse monoids of order $n$ for all previously-known values of $n$ ($n=1 ,\ldots, 9$), and agrees with the output of Distler's code \cite{DistlerThesis} for $n=10$. Second, to guard against system errors unrelated to our implementation that could nevertheless affect its output, we ran our program multiple times on multiple systems, including our server, for $n\leq 13$. We ran our program on our server multiple times for $n=14$ and twice for $n=15$, and we obtained the same results every time. Finally, in search of greater speed and memory efficiency, over the course of this project we recoded, entirely from scratch, several key steps of our algorithm in a number of different ways, and we obtained the same results regardless of which of our implementations of these key steps we used. 

\clearpage

\section{Additional proofs}

In this section we prove Theorem \ref{ThmESN2} and we prove the correctness of our implementation of $\texttt{GPosets}$ in Section \ref{SecPartialOrders}.

\subsection{Proof of Theorem \ref{ThmESN2}} 
\label{SecESN2Pf}
In this section we prove Theorem \ref{ThmESN2}. Our proof is essentially an elaboration of the main idea in Section 4 of \cite{Steinberg2}.

\begin{proof}[Proof of Theorem \ref{ThmESN2}] Let $S$ be a finite inverse semigroup. Recall that the semigroup algebra $\C S$ is a $\C$-vector space with basis $\{s\}_{s\in S}$, where multiplication is given by the extension of the multiplication in $S$ via the distributive law. Steinberg defines another basis (called the {\em groupoid basis}) $\{\ld s \rd\}_{s\in S}$ of $\C S$ as follows \cite{Steinberg2}. For $s\in S$, let
\[
\ld s \rd = \sum_{t\in S:t\leq s} \mu(t,s)t,
\]
where $\mu$ is the M\"obius function of the natural partial order $\leq$ on $S$. 


The groupoid basis multiplies in the following manner. For $s,t\in S$,
\begin{equation}\label{eqGroupoidMult}
\ld s \rd \ld t \rd = 
\begin{cases}
\ld st \rd &\up{if }\dom(s)=\ran(t);\\
0 &\up{otherwise}.
\end{cases}
\end{equation}
The groupoid basis is thus a basis of $\C S$, whose elements multiply as in a groupoid (where we interpret 0 as undefined). Of course, the natural basis of $\C S$ can be recovered by M\"obius inversion. Specifically, for $s\in S$, in $\C S$ we have
\[
s=\sum_{t\in S:t\leq s}\ld t \rd.
\]
The natural partial order of $S$ gives rise to a partial order on the groupoid basis: For  $s,t\in S$, let
\begin{equation*}
 \ld s \rd \leq \ld t \rd \iff s\leq t.
\end{equation*}
With this notation we can write $s$ in terms of the groupoid basis and its partial order:
\begin{equation}
\label{eqZetaTransform}
s=\sum_{t\in S:\ld t\rd \leq \ld s\rd}\ld t \rd.
\end{equation}
Note that the semilattice $E(S)$ is isomorphic to the semilattice $(\{\ld e \rd : e\in E(S)\},\leq)$ by $e\mapsto \ld e \rd$.

Now suppose the partition of $E(S)$ obtained by restricting Green's $\D$-relation on $S$ to $E(S)$ is $\{X_1 ,\ldots, X_k\}$. Suppose $|X_i|=r_i$ and $G_{X_i}\cong G_i$ for all $i\in \{1 ,\ldots, k\}$. Denote the $\D$-class of $S$ containing $X_i$ by $D_i$, and denote the $\C$-span of $H_i=\{\ld s\rd:s\in D_i\}$ by $\C H_i$. It is clear from (\ref{eqGroupoidMult}) that, as an algebra, $\C S=\bigoplus_{i=1}^k \C H_i$. For each $\D$-class $D_i$, fix an idempotent $e_i$, so $G_{e_i}\cong G_i$ for all $i$. 

Steinberg gives the following explicit algebra isomorphism from $\C H_i$ to $M_{r_i}(\C G_{e_i})$. For each $e\in X_i$, fix an element $p_e\in S$ such that $\dom(p_e)=e_i$ and $\ran(p_e)=e$, taking $p_{e_i}=e_i$. Note that $p_e\in D_i$, so $p_e^{-1}\in D_i$ as well. Viewing $r_i\times r_i$ matrices as being indexed by pairs of elements of $X_i$, define a map $\Phi_i:H_i\rightarrow M_{r_i}(\C G_{e_i})$ by
\[
\Phi_i(\ld s\rd)={p_{\ran(s)}}^{-1} s p_{\dom(s)} E_{\ran(s),\dom(s)},
\]
where $E_{\ran(s),\dom(s)}$ is the standard $r_i\times r_i$ matrix with a 1 in the $\ran(s),\dom(s)$ position and $0$ elsewhere. The linear extension of $\Phi_i$ to $\C H_i$ is Steinberg's isomorphism, with inverse induced by, for $s\in G_{e_i}$,
\[
sE_{e,f} \mapsto \ld p_e s p_f^{-1}\rd.
\]
Note that ${p_{\ran(s)}}^{-1} s p_{\dom(s)}\in G_{e_i}$ by construction, so $\Phi_i$ is a bijection between $H_i$ and the natural basis $\{sE_{e,f}:s\in G_{e_i}\up{ and }e,f\in X_i\}$ of $M_{r_i}(\C G_{e_i})$. 
Note further that if $e\in X_i$, then $\Phi_i(\ld e\rd) = e_i E_{e,e}.$ That is, $\Phi_i$ maps $\ld e \rd$ to the matrix which contains the identity of $G_{e_i}$ in the $e,e$ position, and hence $\Phi_i$ restricts to a bijection between $\{\ld e \rd : e\in X_i\}$ and the set of idempotents of the natural basis of $M_{r_i}(\C G_{e_i})$.

Since $\C S=\bigoplus_{i=1}^k\C H_i$, we may glue the $\Phi_i$ together to obtain an isomorphism
\[
\Phi: \C S \rightarrow \bigoplus_{i=1}^k M_{r_i}(\C G_{e_i}).
\]

By hypothesis we have $G_{e_i} \cong G_i$, so let $\omega_i:G_{e_i}\rightarrow G_i$ be an isomorphism. Extend $\omega_i$ to an isomorphism $\omega_i: M_{r_i}(\C G_{e_i}) \rightarrow M_{r_i}(\C G_{i})$ by declaring $\omega_i(gE_{e,f}) = \omega_i(g)E_{e,f}$ and extending linearly.
Glue the $\omega_i$ together to obtain an isomorphism 
\[
\Omega: \bigoplus_{i=1}^k M_{r_i}(\C G_{e_i}) \rightarrow \bigoplus_{i=1}^k M_{r_i}(\C G_{i}). 
\]
Then
\begin{equation*}
\Omega\circ\Phi : \C S \rightarrow \bigoplus_{i=1}^k M_{r_i}(\C G_{i})
\end{equation*}
is an isomorphism.

Let $B$ and $C$ denote the natural bases of $\bigoplus_{i=1}^k M_{r_i}(\C G_i)$ and $\bigoplus_{i=1}^k M_{r_i}(\C G_{e_i})$, respectively. $\Phi$ restricts to a bijection between the groupoid basis of $\C S$ and $C$ and $\Omega$ restricts to a bijection between $C$ and $B$, so we may use $\Omega \circ \Phi$ to define a partial order $\leq_{\Omega\circ\Phi}$ on $B$: for $b_1,b_2\in B$, let
\[
b_1\leq_{\Omega\circ\Phi} b_2 \iff (\Omega\circ\Phi)^{-1}(b_1) \leq (\Omega\circ\Phi)^{-1}(b_2).
\]
We now show that $\leq_{\Omega\circ\Phi}$ is a partial order on $B$ satisfying the hypotheses of Theorem \ref{ThmESN} and that $S$ is recoverable up to isomorphism from the construction of Theorem \ref{ThmESN} applied to $(B,\leq_{\Omega\circ\Phi})$.

Let $E(B)$ and $E(C)$ denote the set of idempotents of $B$ and $C$, respectively. $\Phi$ restricts to a bijection between $\{\ld e \rd : e\in E(S)\}$ and $E(C)$, and $\Omega$ restricts to a bijection between $E(C)$ and $E(B)$. From the definition of $\Omega$ it follows that the semilattice $(\{\ld e \rd : e\in E(S)\},\leq)$ is isomorphic to $(E(B),\leq_{\Omega\circ\Phi})$ by $\ld e \rd \mapsto \Omega\circ\Phi(\ld e\rd)$. In particular $(E(B),\leq_{\Omega\circ\Phi})$ is a meet-semilattice.

Note that for $b\in B$ and $s\in S$, if $b=\Omega\circ\Phi(\ld s\rd)$ then $b^{-1} = \Omega\circ\Phi(\ld s^{-1} \rd)$. From this and parts (\ref{simple10})--(\ref{simple11}) of Theorem \ref{ThmBasicProps} it is straightforward to check that $\leq_{\Omega\circ\Phi}$ satisfies hypotheses (\ref{ESNcondition2})--(\ref{ESNfinalcondition}) of Theorem \ref{ThmESN}.

By \eqref{eqZetaTransform}, we can recover $S$ up to isomorphism from $\leq_{\Omega\circ\Phi}$ and the multiplication of $B$. In particular, for $b\in B$, if we let
\begin{equation*}
\overline b = \sum_{a\in B: a\leq_{\Omega\circ\Phi} b} a,
\end{equation*}
then $\{\overline b : b\in B\}$ is an inverse semigroup isomorphic to $S$.

Finally, let $\sqsubseteq$ be any partial order on $E(B)$ for which $(E(S),\leq) \cong (E(B),\sqsubseteq)$. Write $()$ for the identity of any group. Let $\phi:E(S)\rightarrow E(S)$ be the function for which we have, for $e\in E(S)$,
\[
e \mapsto ()_{\phi(e),\phi(e)}
\]
in this isomorphism. Define $\gamma:B \rightarrow B$ by
\[
\gamma(g_{a,b}) = g_{\phi(a),\phi(b)}
\]
and define $\sqsubseteq'$ on $B$ by
\[
g_{\phi(a),\phi(b)} \sqsubseteq' h_{\phi(c),\phi(d)} \iff g_{a,b} \leq_{\Omega\circ\Phi} h_{c,d}.
\]
It is then straightforward to verify that $\gamma$ is a bijective operation-preserving map, that $(E(B),\sqsubseteq') = (E(B),\sqsubseteq)$, and that $(B,\sqsubseteq')$ is a poset isomorphic to $(B,\leq_{\Omega\circ\Phi})$. It follows that $\sqsubseteq'$ is a partial order on $B$ which restricts to $\sqsubseteq$ on $B$, meets the hypotheses of Theorem \ref{ThmESN}, and yields an inverse semigroup isomorphic to $S$ from the construction of Theorem \ref{ThmESN}.
\end{proof}

\subsection{\texttt{GPosets}}
\label{SecGPosetsCorrect}

In this section we prove the correctness of the implementation of $\texttt{GPosets}$ described in Section \ref{SecPartialOrders}. We require a sequence of lemmas.

\begin{lem}If $S$ is a finite inverse semigroup and $s,t\in S$ with $s\D t$ and $s\leq t$ then $s=t$.
\label{LemNoLEWithinADClass}
\end{lem}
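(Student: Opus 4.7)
The plan is to reduce to the case of idempotents and then settle that case by appealing to stability of finite semigroups.

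For the reduction, from $s\leq t$ together with part~\eqref{simple10} and part (vi) of Theorem~\ref{ThmBasicProps}, I would deduce $\dom(s)=s^{-1}s\leq t^{-1}t=\dom(t)$. From $s\D t$ together with part~\eqref{simplecombo} and transitivity of $\D$, I would deduce $\dom(s)\D\dom(t)$. Granting the idempotent case below, these force $\dom(s)=\dom(t)$. Both $s$ and $t$ then satisfy the conditions $u\leq t$ and $\dom(u)=\dom(t)$, so the uniqueness clause of part (vii) of Theorem~\ref{ThmBasicProps} gives $s=t$.

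For the idempotent case---that $e,f\in E(S)$ with $e\leq f$ and $e\D f$ forces $e=f$---I would invoke stability of finite semigroups. Since $\D$ coincides with Green's $\mathcal{J}$-relation on any finite semigroup, $e\D f$ gives $S^1 e S^1=S^1 f S^1$, while $e=ef$ places $e$ in $S^1 f$. Stability (the standard fact that $\mathcal{J}$-equivalence combined with containment in the same principal left ideal forces $\mathcal{L}$-equivalence) then yields $S^1 e=S^1 f$. In particular $f=xe$ for some $x\in S^1$, whence $fe=xe\cdot e=xe=f$; combined with $fe=e$ (from $e\leq f$), this collapses to $e=f$.

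The main obstacle is the idempotent case, which rests on the textbook fact that finite semigroups are stable; this would need a citation (to, e.g., Howie's \emph{Fundamentals of Semigroup Theory}), as it is not developed in the paper. The reduction itself is routine given Theorem~\ref{ThmBasicProps}. Should a wholly self-contained argument be preferred, one could alternatively take $x\in S$ with $xx^{-1}=e$ and $x^{-1}x=f$ and analyze an idempotent power of $x$ directly, but the bookkeeping is more delicate than the one-line appeal to stability.
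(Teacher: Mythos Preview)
Your proof is correct, and the reduction to the idempotent case is essentially the same as the paper's: both pass from $s\leq t$ to $\dom(s)\leq\dom(t)$ and from $s\D t$ to $\dom(s)\D\dom(t)$, then conclude $s=t$ once $\dom(s)=\dom(t)$ is established.

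The genuine difference lies in how the idempotent case is handled. The paper does \emph{not} invoke stability; instead it applies its own Theorem~\ref{ThmLegitPartition}, the counting result proved earlier for the purpose of characterizing $\D$-partitions. Writing $e=\dom(s)$ and $f=\dom(t)$, that theorem gives
\[
|\{h\in E(S):h\leq e,\ h\D e\}|=|\{h\in E(S):h\leq f,\ h\D e\}|,
\]
and a strict inequality $e<f$ would force the right-hand side to be strictly larger (by transitivity), a contradiction. Your route via stability of finite semigroups is the textbook one and is arguably cleaner for a reader already steeped in Green's relations, but---as you acknowledge---it imports a fact not developed in the paper and so requires an external citation. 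The paper's route costs nothing extra, since Theorem~\ref{ThmLegitPartition} (via Proposition~\ref{PropLegitPartition}) was already needed for the algorithmic content; using it here is a pleasant bit of economy. For the final step, the paper simply writes $s=ts^{-1}s=tt^{-1}t=t$ directly from the defining condition $s=ts^{-1}s$ of the natural partial order, which is marginally shorter than your appeal to the uniqueness clause of part~(vii) but amounts to the same thing.
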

\begin{proof}
Suppose $s\D t$ and $s\leq t$. Then $s^{-1}\leq t^{-1}$ so $\dom(s)=s^{-1}s\leq \dom(t)=t^{-1}t$.  Since $s\D t$, by part \eqref{simplecombo} of Theorem \ref{ThmBasicProps} we have $\dom(s)\D \dom(t)$. Thus by Theorem \ref{ThmLegitPartition} we have 
\[
|\{x\in S: x\leq \dom(s) \up{ and } x\D \dom(s)\}| = |\{x\in S: x\leq \dom(t) \up{ and }x\D \dom(s)\}|.
\]
If we were to have $\dom(s)< \dom(t)$, then by transitivity the quantity on the right would be strictly larger than the quantity on the left. Therefore we must have $\dom(s)=\dom(t)$. Then since $s\leq t$ we have $s=ts^{-1}s=tt^{-1}t=t$.
\end{proof}

We emphasize that the condition that $S$ be finite is necessary for Lemma \ref{LemNoLEWithinADClass}, as there exist infinite inverse semigroups for which the natural partial order does not reduce to equality on $\D$-classes.

Now let notation be as in Section \ref{SecPartialOrders}. 

\begin{lem}
\label{LemOrderingOfBis}Write $\leq$ for $\leq_{E(B)}$. 
If there exist idempotents $e\in B_i$ and $f\in B_j$ such that $f\leq e$, then $B_j \preceq B_i$.
\end{lem}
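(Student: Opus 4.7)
The plan is to exploit the fact that $P$ is a $\D$-partition of $E$, transferring the relation $f \leq e$ to any chosen idempotent $e' \in X_i$ (in particular, one at the maximum level $I(B_i)$), thereby producing an idempotent in $X_j$ at a strictly greater level and forcing $I(B_j) > I(B_i)$.

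First I would dispose of the trivial case. If $B_i = B_j$ (equivalently $X_i = X_j$), then $B_j \preceq B_i$ holds vacuously. So assume $X_i \neq X_j$. Since idempotents of $B_i$ are exactly $\{()_{a,a} : a \in X_i\}$ and similarly for $B_j$, the inequality $f \leq e$ in $B$ corresponds, via the bijection $a \mapsto ()_{a,a}$, to $b \leq a$ in $E$ for some $a \in X_i$, $b \in X_j$. Disjointness of $X_i$ and $X_j$ forces $a \neq b$, so we actually have $b < a$ strictly in $E$.

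Next I would isolate a small lemma about down-levels: for $x,y \in E$, if $x < y$, then $\ell(x) > \ell(y)$, where $\ell$ denotes the level function $L_1,\ldots, L_d$ as in Definition \ref{defLevel}. The proof is a one-line induction: if $y \in L_c$, then $y$ is still present (and maximal) in $E \setminus (L_1 \cup \cdots \cup L_{c-1})$, and because $x \leq y$, so is $x$; but $x < y$ prevents $x$ from being maximal at stage $c$ or earlier, so $x \in L_{c'}$ for some $c' > c$.

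Now I would pick $e'' \in X_i$ with $\ell(e'') = I(B_i)$ (such an $e''$ exists by definition of $I$). Since $a, e'' \in X_i$, they are $\sim$-equivalent in the partition $P$. Applying the $\D$-partition property to $e_1 = a$, $e_2 = e''$, and the element $b \leq a$ yields
\[
|\{h \in E : h \leq a \text{ and } h \sim b\}| = |\{h \in E : h \leq e'' \text{ and } h \sim b\}|.
\]
The left-hand set contains $b$, so the right-hand side is at least $1$; thus there exists $b' \leq e''$ with $b' \in X_j$. Since $X_i \cap X_j = \varnothing$ and $e'' \in X_i$, we have $b' \neq e''$, so $b' < e''$. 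By the level lemma, $\ell(b') > \ell(e'') = I(B_i)$, and since $b' \in X_j$ we conclude $I(B_j) \geq \ell(b') > I(B_i)$. By the defining rule \eqref{EqBiOrder} of $\prec$, larger $I$-value comes strictly earlier in the order, so $B_j \prec B_i$, hence $B_j \preceq B_i$.

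There is no real obstacle here; the only content beyond unpacking definitions is the elementary level lemma and the observation that one must transfer $f \leq e$ to a top-level representative of $X_i$ using the $\D$-partition hypothesis, since nothing prevents $e$ itself from sitting at a level well below $I(B_i)$.
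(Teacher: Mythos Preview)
Your proof is correct and follows essentially the same approach as the paper's: transfer the relation $f\leq e$ via the $\D$-partition property to an idempotent of $X_i$ sitting at level $I(B_i)$, then observe that the resulting element of $X_j$ lies at a strictly greater level. The only cosmetic differences are that you argue directly rather than by contradiction and that you isolate the level monotonicity fact as a separate lemma, whereas the paper uses it implicitly.
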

\begin{proof}
Suppose there are idempotents $e\in B_i$, $f\in B_j$ with $f\leq e$. 
If $e=f$ then since $B_i$ and $B_j$ cannot overlap nontrivially we have $B_i=B_j$. So suppose $e<f$ and for the sake of contradiction suppose $B_j \npreceq B_i$. Then $B_i \prec B_j$, so we have
\[
I(B_i) = \max(r\in \Z:B_i \cap L_r) \geq \max(r\in \Z:B_j \cap L_r) = I(B_j).
\]
Let $r_i=I(B_i)$ and let $\overline e \in B_i$ with $\overline e=()_{a,a}$ for some $a \in L_{r_i}$. Then, since $P$ is a $\D$-partition of $E$, we have
\[
|\{h < \overline e: h\in B_j\}| = |\{h < e:h\in B_j\}|.
\]
Since $f<e$ the quantity on the right is positive, so there exists $\overline h\in B_j$ with $\overline h < \overline e$. Therefore
\[
\max(r\in \Z: B_j \cap L_r) > r_i = I(B_i),
\]
a contradiction.
\end{proof}

Parts \eqref{LemRestrictingTheOrderPt2}--\eqref{LemRestrictingTheOrderPt5} of the following lemma concern the membership of elements in $\widehat B_i$ in restrictions to the $\widehat B_i$ of the partial orders on $B$ we seek. Part \eqref{LemRestrictingTheOrderPt6} is a technical result that will be used in the proof of Lemma \ref{LemGPosetsChildrenProperties}.

\begin{lem}
\label{LemRestrictingTheOrder}
Suppose $\leq$ is a partial order on $B$ satisfying the hypotheses of Theorem \ref{ThmESN} and  $1\leq i \leq k$. Then:
\begin{enumerate}[(i)]
	\item \label{LemRestrictingTheOrderPt2} $\forall t \in \widehat B_i$, if $s\leq t$ then $s^{-1}\leq t^{-1}$ and $s,s^{-1}\in \widehat B_i$. 
	\item \label{LemRestrictingTheOrderPt3} $\forall y,z\in \widehat B_i$, if $s\leq y$, $t\leq z$, $st\neq 0$, and $yz\neq 0$, then $st\leq yz$ and $s,t, s t, y z \in \widehat B_i$.
	\item \label{LemRestrictingTheOrderPt4} $\forall s\in \widehat B_i$, if $e\leq \dom(s)$, then $\exists ! t\leq s$ such that $\dom(t)=e$. We also have $t\in \widehat B_i$.
	\item \label{LemRestrictingTheOrderPt5} $\forall s\in \widehat B_i$, if $e\leq \ran(s)$, then $\exists ! t\leq s$ such that $\ran(t)=e$. We also have $t\in \widehat B_i$.
 \item \label{LemRestrictingTheOrderPt6} \label{LemNoNewCovers} If $h,h'\leq i$, $b \in B_h$, $b'\in B_{h'}$, and $b$ covers $b'$, then there exist idempotents $e\in B_h$, $f\in B_{h'}$, such that $e$ covers $f$. Furthermore $h'<h$.
\end{enumerate}
\end{lem}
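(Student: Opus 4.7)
The plan is to translate between the partial order on $B$ and the natural partial order on the inverse semigroup $\overline B$ produced by Theorem \ref{ThmESN}, and to use Lemma \ref{LemOrderingOfBis} (and in one place Lemma \ref{LemNoLEWithinADClass}) to track which block each element lies in.

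For part (ii), I would first apply ESN condition (ii) to obtain $s^{-1}\leq t^{-1}$. Applying ESN condition (iii) to the inequalities $s\leq t$ and $s^{-1}\leq t^{-1}$ (whose relevant products are the nonzero idempotents $ss^{-1},tt^{-1}$ and $s^{-1}s,t^{-1}t$) then yields $\ran(s)\leq \ran(t)$ and $\dom(s)\leq \dom(t)$. If $t\in B_j$ for some $j\leq i$, then $\dom(t)\in X_j$ and $\dom(s)\in X_h$ for the $h$ with $s\in B_h$, so Lemma \ref{LemOrderingOfBis} gives $h\leq j\leq i$, placing both $s$ and $s^{-1}$ (which lies in the same block $B_h$) inside $\widehat B_i$. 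If instead $t$ is idempotent, I would pass to $\overline B$: the relation $\overline s \leq \overline t$ holds in its natural partial order, $\overline t\in E(\overline B)$, so part \eqref{simple6} of Theorem \ref{ThmBasicProps} forces $\overline s\in E(\overline B)$. Since $b\mapsto \overline b$ is a surjection $B\to \overline B$ between sets of equal cardinality $\sum_i r_i^2|G_i|$ by \eqref{eqSize}, the map is bijective, and thus $s\in E(B)\subseteq \widehat B_i$.

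Parts (iii), (iv), and (v) should follow quickly from part (ii) and the ESN conditions. For part (iii), the inequality $st\leq yz$ is exactly ESN condition (iii); part (ii) places $s,t$ in $\widehat B_i$; and membership of $st$ and $yz$ in $\widehat B_i$ reduces to the observation that in the direct sum $\bigoplus_j M_{r_j}(\C G_j)$ a nonzero product forces both factors to lie in the same block $B_j$, and if that block has $j>i$, then both factors must already be in $E(B)\cap B_j$ (since nothing else from $B_j$ lies in $\widehat B_i$) and the only nonzero product of two such elements is a self-product of the form $()_{a,a}$, which lies back in $E(B)$. Parts (iv) and (v) are then immediate from ESN conditions (iv) and (v) together with part (ii) applied to the produced element $t\leq s\in \widehat B_i$.

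For part (vi), the strategy is to show that the map $\{t\in B : t\leq b\}\to\{h\in E(B) : h\leq \dom(b)\}$ sending $t\mapsto \dom(t)$ is a poset isomorphism. Bijectivity is ESN (iv); order preservation was established in the analysis of part (ii); and order reflection is a short argument applying ESN (iv) inside $t_2\leq b$ and invoking uniqueness to identify the resulting element with $t_1$. Consequently $b$ covers $b'$ precisely when $\dom(b)\in B_h$ covers $\dom(b')\in B_{h'}$, yielding the desired idempotents $e=\dom(b)$ and $f=\dom(b')$. Lemma \ref{LemOrderingOfBis} then gives $h'\leq h$, and the main obstacle is ruling out the possibility $h'=h$: this would make $\overline{\dom(b)}$ and $\overline{\dom(b')}$ distinct $\D$-related idempotents of $\overline B$ with $\overline{\dom(b')}<\overline{\dom(b)}$, contradicting Lemma \ref{LemNoLEWithinADClass}.
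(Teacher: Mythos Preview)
Your argument is correct throughout, and for the last item it takes a genuinely different (and arguably cleaner) route from the paper.

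For items (i)--(iv) of the lemma (which you number (ii)--(v), apparently following the internal label names rather than the displayed roman numerals), your approach coincides with the paper's: deduce $\ran(s)\le\ran(t)$ from ESN conditions (ii) and (iii), then invoke Lemma~\ref{LemOrderingOfBis} to pin down the block of $s$. The paper dispatches all four items in a single sentence. You go further and explicitly treat the case $t\in E(B)\cap B_j$ with $j>i$ by passing to $\overline B$ and applying Theorem~\ref{ThmBasicProps}\eqref{simple6}; the paper's one-line appeal to Lemma~\ref{LemOrderingOfBis} does not literally handle that case, so your extra care is appropriate.

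For item (v) (your ``part (vi)''), the paper argues by contradiction: assuming no idempotent of $B_h$ covers one of $B_{h'}$, it inserts an idempotent $f'$ strictly between $\ran(b')$ and $\ran(b)$, lifts $f'$ via ESN~(v) to an element $x$ with $b'<x<b$ (using uniqueness to identify $b'$ with the element below $x$ having the right range), and so contradicts the covering. Your approach is structural: you show that $t\mapsto\dom(t)$ is a poset isomorphism from $\{t\in B:t\le b\}$ onto $\{h\in E(B):h\le\dom(b)\}$, so cover relations transfer directly, and $e=\dom(b)$, $f=\dom(b')$ witness the conclusion. Your order-reflection step (apply ESN~(iv) inside $t_2$, then use uniqueness inside $b$) is exactly what is needed and is correct. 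Both proofs finish $h'<h$ the same way, via Lemmas~\ref{LemNoLEWithinADClass} and~\ref{LemOrderingOfBis}. Your version has the advantage of isolating a reusable fact (the poset isomorphism), at the cost of one extra verification; the paper's version is more ad hoc but entirely self-contained.
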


\begin{proof}
To show \eqref{LemRestrictingTheOrderPt2}--\eqref{LemRestrictingTheOrderPt5} we only need to establish the claimed membership in $\widehat B_i$. It follows from hypotheses \eqref{ESNcondition2} and \eqref{ESNcondition3} of Theorem \ref{ThmESN} that if $s,t\in B$ with $s\leq t$, then $\ran(s)\leq \ran(t)$. The claimed membership in $\widehat B_i$ in \eqref{LemRestrictingTheOrderPt2}--\eqref{LemRestrictingTheOrderPt5} follows from Lemma \ref{LemOrderingOfBis}.

To show \eqref{LemRestrictingTheOrderPt6}, first suppose to the contrary that there exists $b\in B_h$ which covers $b'\in B_{h'}$, and for all idempotents $e\in B_h$, $f\in B_{h'}$, $e$ does not cover $f$. Then $\ran(b)$ does not cover any idempotents in $B_{h'}$. From the hypotheses of Theorem \ref{ThmESN} it follows that $\ran(b') < \ran(b)$. Since $\ran(b)$ does not cover $\ran(b')$, there exists an idempotent $f'\in B_r$ for some $r\leq i$ for which $\ran(b')< f' < \ran(b)$. 
By \eqref{LemRestrictingTheOrderPt5}, then, there exists $x\in B_r$ such that $x< b$ and $\ran(x)=f'$. Then, since $\ran(b')\in B_{h'}$ and $\ran(b')< f'$, there exists $y<x$ such that $y\in B_{h'}$ and $\ran(y)=\ran(b')$. In addition, there is a unique element $u\in B_{h'}$ such that $u<b$ and $\ran(u)=\ran(b')$. Since $b',y\in B_{h'}$, $b'<b$, $y<b$, and $\ran(y)=\ran(b')$, we have that $b'=y$. But then we have $b'=y<x<b$, contradicting the assumption that $b$ covers $b'$.
The final statement of \eqref{LemNoNewCovers} follows from Lemmas \ref{LemNoLEWithinADClass} and \ref{LemOrderingOfBis}.
\end{proof}

%
%

\begin{lem}
\label{LemGPosetsChildrenProperties}
Let $N = (\widehat B_i,\leq_{i},i)$ be a node of the search tree for $\texttt{GPosets}$ with $1\leq i<k$. Then the children of $N$ (produced by Algorithm \ref{AlgGPosetsChildren}) consist precisely of all possible nodes $(\widehat B_{i+1},\leq_{i+1},i+1)$ such that
\begin{enumerate}
	\item[(C1)]\label{Childrenitem1} if $a,b \in \widehat B_{i}$, then $a \leq_i b$ if and only if $a \leq_{i+1} b$,
	\item[(C2)]\label{Childrenitem2} $\forall s, t \in \widehat B_{i+1}$, if $s\leq_{i+1} t$ then $s^{-1}\leq_{i+1} t^{-1}$,
	\item[(C3)]\label{Childrenitem3} $\forall s,t,y,z\in \widehat B_{i+1}$, if $s\leq_{i+1} y$, $t\leq_{i+1} z$, $st\neq 0$, and $yz\neq 0$, then $st\leq_{i+1} yz$,
	\item[(C4)]\label{Childrenitem4} $\forall e,s\in \widehat B_{i+1}$, if $e\leq_{i+1} \dom(s)$, then $\exists ! t\leq_{i+1} s$ such that $\dom(t)=e$, and
	\item[(C5)]\label{Childrenitem5} $\forall e,s\in \widehat B_{i+1}$, if $e\leq_{i+1} \ran(s)$, then $\exists ! t\leq_{i+1} s$ such that $\ran(t)=e$.
\end{enumerate}
\end{lem}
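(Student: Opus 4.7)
The plan is to prove the two directions separately: (soundness) every node output by Algorithm \ref{AlgGPosetsChildren} satisfies (C1)--(C5), and (completeness) every extension $(\widehat B_{i+1},\leq_{i+1},i+1)$ satisfying (C1)--(C5) is produced by the algorithm for some choice of inputs. Throughout I would rely on Lemma \ref{LemRestrictingTheOrder} to control how the partial order can extend from $\widehat B_i$ to $\widehat B_{i+1}$, on Lemma \ref{LemNoLEWithinADClass} to rule out nontrivial comparabilities inside any single block $B_j$, and on Lemma \ref{LemOrderingOfBis} to confine descending chains from $B_{i+1}$ to the blocks $B_j \prec B_{i+1}$.

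For soundness, (C1) is immediate from the construction: each pair-order $\widetilde\leq$ on $B_{i+1}\cup B_j$ restricts to equality on each of $B_{i+1}$ and $B_j$, so the transitive closure with $\leq_i$ introduces no new relations inside $\widehat B_i$. Properties (C2)--(C5) on pairs entirely inside $\widehat B_i$ follow directly from the corresponding properties of $\leq_i$; on pairs involving $B_{i+1}$, they reduce to properties (R1)--(R4) supplied by \texttt{PosetPossibilities}, once Lemma \ref{LemOrderingOfBis} is used to show any element $\leq_{i+1}$-below an element of $B_{i+1}$ lies in a covered block. Properties involving products on both sides additionally require a short transitivity/composition argument to combine the pair choices. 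The only place where combining pair-choices can fail (C4)/(C5) is in uniqueness across \emph{all} $s \in B_{i+1}$, and this is precisely what \texttt{PassesCardinalityTest} certifies.

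For completeness, let $\leq_{i+1}$ satisfy (C1)--(C5). For each $B_j$ covered by $B_{i+1}$ I would take $\leq_{B_{i+1}\cup B_j}$ to be the restriction of $\leq_{i+1}$; by Lemma \ref{LemNoLEWithinADClass} and (C1) this restriction is trivial on each of $B_{i+1}$ and $B_j$, while (C2)--(C5) imply (R1)--(R4), so the restriction appears among the outputs of \texttt{PosetPossibilities}. It then remains to show that $\leq_{i+1}$ equals the transitive closure of $\leq_i$ together with these pair-restrictions, and that the cardinality test passes. The first assertion is where part \eqref{LemNoNewCovers} of Lemma \ref{LemRestrictingTheOrder} is essential: every new cover $b' \lessdot b$ in $\leq_{i+1}$ must have $b\in B_{i+1}$ (by (C1) and Lemma \ref{LemNoLEWithinADClass}) and forces an idempotent cover between the same two blocks, so $b'$ lives in a block $B_j$ covered by $B_{i+1}$ and the cover is therefore captured in the appropriate pair-restriction. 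Since $\widehat B_{i+1}$ is finite, every comparability decomposes as a chain of covers, each of which lies either inside $\widehat B_i$ (already in $\leq_i$) or across a single such pair, so the transitive closure reconstructs $\leq_{i+1}$ completely. The cardinality test follows from (C4)/(C5) by using the unique-$t$ clauses to build a bijection between $\{t\in B_h:t\leq_{i+1} s\}$ and $\{f\in B_h:f\leq_{i+1} e\}$ for each fixed $h$ and each $s\in B_{i+1}$.

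The main obstacle I expect is the chain-of-covers step in the completeness direction. One must argue that an arbitrary comparability $b' \leq_{i+1} b$ with $b\in B_{i+1}$ can always be realized by a chain whose links are either (a) relations from $\leq_i$ inside $\widehat B_i$ or (b) covers between $B_{i+1}$ and a block it directly covers; handling intermediate elements in blocks $B_h$ that are not directly covered by $B_{i+1}$ requires a careful induction through the block-ordering $\prec$, leveraging Lemma \ref{LemRestrictingTheOrder}\eqref{LemNoNewCovers} each time a cover crosses from $B_{i+1}$ into the $\widehat B_i$ side.
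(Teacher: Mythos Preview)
Your overall structure matches the paper's: split into completeness and soundness, with completeness relying on Lemma \ref{LemRestrictingTheOrder}\eqref{LemNoNewCovers} to show every new cover in $\leq_{i+1}$ crosses from $B_{i+1}$ into a directly-covered block, so the transitive closure of $\leq_i$ with the pair-orders reconstructs $\leq_{i+1}$. The paper's completeness argument is essentially what you describe, and the chain-of-covers obstacle you flag is handled just as you outline; for the cardinality test the paper invokes Proposition \ref{PropLegitPartition} (applied to the inverse semigroup that $(\widehat B_{i+1},\leq_{i+1})$ would produce via Theorem \ref{ThmESN}) rather than building the bijection directly from (C4)/(C5), but your route works too.

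The soundness sketch has a concrete error. You write that Lemma \ref{LemOrderingOfBis} shows ``any element $\leq_{i+1}$-below an element of $B_{i+1}$ lies in a covered block''; it does not. It only yields $B_h \preceq B_{i+1}$, and $B_h$ may sit several steps below $B_{i+1}$, so (R1)--(R4) do not apply directly to a pair $s<t$ with $t\in B_{i+1}$ and $s\in B_h$. The paper's fix is to interpolate an element $t'\in B_j$ (with $B_j$ genuinely covered by $B_{i+1}$) satisfying $s\leq t'\leq t$, then combine the pair-property (R1)--(R4) on $(t',t)$ with the inductive property of $\leq_i$ on $(s,t')$. This interpolation is routine for (C2), (C4), (C5), but (C3) is more than ``a short transitivity/composition argument'': with $y,z\in B_{i+1}$ and $s,t\in B_h$, after choosing $y'\in B_j$ with $s\leq y'\leq y$ one must use (R4) to produce $z'\in B_j$ below $z$ with $\ran(z')=\dom(y')$, and then argue that $t\leq z'$ --- which requires invoking the \emph{already-established} uniqueness in (C5), since otherwise $z$ would have two distinct elements of $B_h$ below it with the same range. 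The paper accordingly verifies (C2), (C4), (C5), (C3) in that order so that (C5) is available when (C3) needs it.
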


\begin{proof}
Suppose $\leq_{i+1}$ is a partial order on $\widehat B_{i+1}$ satisfying (C1)--(C5). By part \eqref{LemNoNewCovers} of Lemma \ref{LemRestrictingTheOrder}, if $b$ covers $b'$ in $\leq_{i+1}$ and $b\in B_{i+1}$, then $b'\in B_j$ for some $j \leq i$ and for which $B_{i+1}$ covers $B_j$. Furthermore, the construction of Theorem \ref{ThmESN} applied to $(\widehat B_{i+1},\leq_{i+1})$ would produce a finite inverse semigroup, so by Lemma \ref{LemNoLEWithinADClass} we have that $\leq_{i+1}$ restricts to equality on $B_h$, $\forall h\leq i+1$. Finally, by Proposition \ref{PropLegitPartition}, if $h \leq i$, then for all $s,t\in B_{i+1}$, $|\{s'\in B_h: s'\leq_{i+1} s\}| = |\{t'\in B_h: t'\leq_{i+1} t\}|$. Therefore every partial order $\leq_{i+1}$ on $\widehat B_{i+1}$ satisfying (C1)--(C5) appears among the children of $N$. 

We now show conversely that every child of $N$ satisfies (C1)--(C5). (C1) is satisfied by construction (in particular, by the specification of the function $\tt{PosetPossibilities}$). Let $(\widehat B_{i+1}, \leq_{i+1}, i+1)$ be a child of $N$, so $\leq_{i+1}$ is a partial order on $\widehat B_{i+1}$ such that 
\begin{enumerate}[(i)]
	\item\label{BigProp1} $\forall s,t\in \widehat B_i$, if $s\leq_{i+1} t$ then $s^{-1} \leq_{i+1} t^{-1}$,
	\item\label{BigProp2} $\forall s,t,y,z\in \widehat B_i$, if $s\leq_{i+1} y$, $t\leq_{i+1} z$, $s t\neq 0$, and $y z\neq 0$, then $s t \leq_{i+1} y z$, 
	\item\label{BigProp3} $\forall e,s\in \widehat B_i$, if $e\leq_{i+1} \dom(s)$, then $\exists! t\leq_{i+1} s$ such that $\dom(t)=e$,
	\item\label{BigProp4} $\forall e,s\in \widehat B_i$, if $e\leq_{i+1} \ran(s)$, then $\exists! t\leq_{i+1} s$ such that $\ran(t)=e$,
	\item\label{BigProp6} if $s\in \widehat B_{i+1}$, then $\forall h\in \{1 ,\ldots, i\}$, $|\{t\in B_h:t\leq_{i+1} s\}| = |\{e\in B_h: e\leq_{i+1} \ran(s)\}|=|\{e\in B_h: e\leq_{i+1} \dom(s)\}|$.

\noindent Furthermore, if $B_{i+1}$ covers $B_j$, then 

		\item\label{BigProp8} $\forall t\in B_{i+1}$, if $s\in B_j$ with $s\leq_{i+1} t$, then $s^{-1}\leq_{i+1} t^{-1}$,
		\item\label{BigProp9} $\forall y,z\in B_{i+1}$, if $s\leq_{i+1} y$ and $t\leq_{i+1} z$ with $s,t\in B_j$, $s t \neq 0$, and $y z\neq 0$, then $s t\leq_{i+1} yz$,
		\item\label{BigProp10} $\forall s\in B_{i+1}$, if $e\leq_{i+1} \dom(s)$ with $e\in B_j$, then $\exists! t\leq_{i+1} s$ such that $\dom(t)=e$, and
		\item\label{BigProp11} $\forall s\in B_{i+1}$, if $e\leq_{i+1} \ran(s)$ with $e\in B_j$, then $\exists! t\leq_{i+1} s$ such that $\ran(t)=e$.
\end{enumerate}

In order, we explain why $(\widehat B_{i+1},\leq_{i+1}, i+1)$ satisfies (C2), (C4), (C5), and (C3). For the remainder of the proof, write $\leq$ for $\leq_{i+1}$.

\begin{enumerate}
	\item[(C2)] Suppose $s,t\in \widehat B_{i+1}$ and $s\leq t$. If $t\in \widehat B_{i}$ or $t=s$ we are done, so suppose $t\in B_{i+1}$ and $s<t$. Then $s\in B_h$ for some $h\leq i$. If $B_{i+1}$ covers $B_h$ we are done, so suppose $B_{i+1}$ does not cover $B_h$. Let $j\leq i$ and $t'\in B_j$ such that $B_{i+1}$ covers $B_j$ and $s\leq t'\leq t$. Then $t'^{-1}\leq t$ (by \eqref{BigProp8}) and $s^{-1}\leq t'^{-1}$ (by \eqref{BigProp1}), so $s^{-1}\leq t^{-1}.$
	\item[(C4)] Let $e,s\in \widehat B_{i+1}$ and $e\leq \dom(s)$. If $s\in \widehat B_{i}$ or $e\in B_{i+1}$ we are done, so suppose $s\in B_{i+1}$ and $e\in B_h$ for some $h\leq i$. If $B_{i+1}$ covers $B_h$ we are done, so suppose $B_{i+1}$ does not cover $B_h$. Let $j\leq i$ such that $B_{i+1}$ covers $B_j$ and there exists $f\in B_j$ such that $e\leq f\leq \dom(s)$. Then $\exists s'\in B_j$ such that $s'\leq s$ and $\dom(s)=f$ (by \eqref{BigProp10}) and $\exists t\in B_h$ such that $t\leq s'$ and $\dom(t)=e$ (by \eqref{BigProp3}), so $\exists t\in B_h$ such that $t\leq s$ and $\dom(t)=e$. The uniqueness of $t$ follows from \eqref{BigProp6}---in particular, if $\{e\in B_h:e\leq \dom(s)\} = \{e_1 ,\ldots, e_p\}$, then for $1\leq q\leq p$, let $\phi(e_q)=\{t\in B_h:t\leq s, \dom(t)=e_q\}$. By the preceding argument, $|\phi(e_q)|\geq 1$ for all $1\leq q \leq p$. Since the $e_q$ are distinct, the $\phi(e_q)$ do not overlap. Furthermore $\cup_{q=1}^p \phi(e_q) = \{t\in B_h: t\leq s, \dom(t)\leq \dom(s)\}$, so $ |\{t\in B_h: t\leq s,\dom(t)\leq\dom(s)\}|\geq p$. But by \eqref{BigProp6}, $|\{t\in B_h: t\leq s, \dom(t)\leq\dom(s)\}| \leq p$, so $|\{t\in B_h: t\leq s,\dom(t)\leq \dom(s)\}| = p$ and $|\phi(e_q)| = 1$ for all $1\leq q \leq p$.
	
	\item[(C5)] Similar to (C4).

	\item[(C3)] Let $s,t,y,z\in \widehat B_{i+1}$, $s\leq y$, $t\leq z$, $s t \neq 0$, and $y z\neq 0$. We need to show $s t\leq yz$. Since $y z\neq 0$, we have $y,z\in B_x$ for some $x\leq i+1$. If $x\leq i$ we are done so suppose $y,z\in B_{i+1}$. Similarly, we have $s,t\in B_h$ for some $h\leq i+1$. If $h=i+1$ or $B_{i+1}$ covers $B_h$ we are done, so suppose $h\leq i$ and $B_{i+1}$ does not cover $B_h$. 	By hypothesis we have $\dom(s)=\ran(t)$ and $\dom(y)=\ran(z)$. 
	Let $j\leq i$ be such that $B_{i+1}$ covers $B_j$ and there exists $y'\in B_j$ such that $s\leq y'\leq y$. It follows from \eqref{BigProp8} and \eqref{BigProp9} that $\dom(y')\leq \dom(y)$, and from \eqref{BigProp1} and \eqref{BigProp2} that $\dom(s)\leq \dom(y')$. 
	Since $\dom(y')\leq \dom(y)=\ran(z)$, by \eqref{BigProp11} there is a unique $z'\in B_j$ such that $z'\leq z$ and $\ran(z')=\dom(y')$. By \eqref{BigProp9} we have $y'z'\leq yz$. 
	We claim that $t\leq z'$, for if not there would exist $t'\in B_h$ such that $t\neq t'$, $t'\leq z'$, and $\ran(t')=\dom(s)$; but then we would have distinct elements $t,t'\in B_h$ with $t,t'\leq z$ and $\ran(t)=\ran(t')=\dom(s)$, contradicting (C5). So $t\leq z'$. By \eqref{BigProp2}, then, $s t\leq y'z'$, so by transitivity we have $s t \leq y z$.
\end{enumerate}
\end{proof}

\begin{prop}[Correctness of implementation of $\tt{GPosets}$]The set of partial orders in the leaves of the search tree for $\tt{GPosets}$ is precisely the set of partial orders on $B=\widehat B_k$ satisfying the hypotheses of Theorem \ref{ThmESN}.
\end{prop}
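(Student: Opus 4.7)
The plan is to combine Lemma \ref{LemRestrictingTheOrder} and Lemma \ref{LemGPosetsChildrenProperties} via induction on the depth $i$ of the search tree, establishing set equality in both directions. All the hard work is already packaged in those two lemmas; the proof is essentially a bookkeeping chain that walks up and down a single branch.

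For the forward inclusion, suppose $\leq$ is a partial order on $B$ satisfying the hypotheses of Theorem \ref{ThmESN}. For each $i\in \{1,\ldots,k\}$ let $\leq_i$ denote the restriction of $\leq$ to $\widehat B_i$. The root of the tree is $(\widehat B_1, \leq_1, 1)$ by construction, since $\leq_1$ extends $\leq_{E(B)}$. Parts \eqref{LemRestrictingTheOrderPt2}--\eqref{LemRestrictingTheOrderPt5} of Lemma \ref{LemRestrictingTheOrder} are precisely conditions (C2)--(C5) of Lemma \ref{LemGPosetsChildrenProperties} for $\leq_{i+1}$ on $\widehat B_{i+1}$, and (C1) holds trivially because $\leq_i$ is defined as the restriction of $\leq_{i+1}$. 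Hence Lemma \ref{LemGPosetsChildrenProperties} produces $(\widehat B_{i+1}, \leq_{i+1}, i+1)$ as a child of $(\widehat B_i, \leq_i, i)$ for each $i < k$. Iterating shows that $(B, \leq, k) = (\widehat B_k, \leq_k, k)$ appears as a leaf.

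For the reverse inclusion, let $(B, \leq_k, k)$ be a leaf of the tree, and let $\leq_i$ be the associated partial order on $\widehat B_i$ along the branch leading to this leaf. Condition (C1), propagated down the branch, guarantees that $\leq_k$ restricts to $\leq_{E(B)}$ on $E(B)$, so hypothesis \eqref{ESNcondition1} of Theorem \ref{ThmESN} is inherited from the standing assumption that $(E(B),\leq_{E(B)})$ is a meet-semilattice. Applying Lemma \ref{LemGPosetsChildrenProperties} at level $i=k-1$ gives that $\leq_k$ satisfies (C2)--(C5) on the full set $\widehat B_k = B$, and these are exactly hypotheses \eqref{ESNcondition2}--\eqref{ESNfinalcondition} of Theorem \ref{ThmESN}.

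The main subtlety I expect to have to address is the base case $i=1$: the root uses $\leq_1 = \leq_{E(B)}$, but $\widehat B_1 = E(B)\cup B_1$ is generally strictly larger than $E(B)$, so $\leq_1$ should be understood as the extension of $\leq_{E(B)}$ by equality on $B_1\setminus E(B)$. One must check that (C2)--(C5) hold trivially for this extension so that Lemma \ref{LemGPosetsChildrenProperties} applies at the first step. Once that initial compatibility is in place, no further analytic work is required and the rest of the argument is a clean induction combining the two previously proved lemmas.
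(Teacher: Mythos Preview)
Your proposal is correct and follows essentially the same approach as the paper, which likewise combines Lemmas \ref{LemRestrictingTheOrder} and \ref{LemGPosetsChildrenProperties} by induction along a branch of the search tree. The paper handles your base-case subtlety in one line by invoking part \eqref{LemNoNewCovers} of Lemma \ref{LemRestrictingTheOrder}, which forces the restriction of any admissible $\leq$ to $\widehat B_1$ to be the trivial extension of $\leq_{E(B)}$; one small mis-framing on your end is that the issue is not whether (C2)--(C5) hold at the root so that Lemma \ref{LemGPosetsChildrenProperties} ``applies'' (that lemma applies to any node by definition), but simply whether the restriction of $\leq$ to $\widehat B_1$ matches the root's fixed order.
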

\begin{proof}
Suppose $\leq$ is a partial order on $B$ satisfying the hypotheses of Theorem \ref{ThmESN}.
By part \eqref{LemNoNewCovers} of Lemma \ref{LemRestrictingTheOrder}, the restriction of $\leq$ to $B_1$ is $\leq_{E(B)}$. Lemmas \ref{LemRestrictingTheOrder}--\ref{LemGPosetsChildrenProperties} and induction establish that $\leq$ may be found as one of the leaves of the search tree.
Conversely, Lemma \ref{LemGPosetsChildrenProperties} establishes that the partial order $\leq_k$ of every leaf $(\widehat B_k,\leq_k,k)$ of the search tree satisfies the hypotheses of Theorem \ref{ThmESN}.
\end{proof}

\section{Acknowledgments}

We are grateful to Sam Houston State University (SHSU) and the IT\textsf{@}Sam department at SHSU for their assistance in building and maintaining the server on which we obtained our computational results. We are also thankful to the anonymous referee, whose suggestions have helped improve the presentation and readability of the paper.

\clearpage

\bibliographystyle{plain}
\bibliography{Enumbib}
\end{document}